\newcommand{\RomanNumeralCaps}[1]
    {\MakeUppercase{\romannumeral #1}}
\let\@wraptoccontribs\wraptoccontribs
\newcommand\blfootnote[1]{%
  \begingroup
  \renewcommand\thefootnote{}\footnote{#1}%
  \addtocounter{footnote}{-1}%
  \endgroup
}
\renewcommand*\env@matrix[1][*\c@MaxMatrixCols c]{%
  \hskip -\arraycolsep
  \let\@ifnextchar\new@ifnextchar
  \array{#1}}
\def\O{\mathcal{O}}
\newcommand{\C}{{\mathfrak{C}}}
\newcommand{\D}{{\mathfrak{D}}}
\def\p{{\mathfrak{p}}}
\newcommand\FPP{\text{FPP}}
\def\p{{\mathfrak{p}}}
\def\O{{\mathcal{O}}}
\def\O{{\mathcal O}}
\def\Gal{\mathop{\rm Gal}\nolimits}
\def\O{{\mathcal O}}
\def\CVD{{\hfill\hfil{\lower 2pt\hbox{\vrule\vbox to 7pt
{\hrule width  5pt\varphifill\hrule}\varphirule}}}\par}
\DeclareMathOperator{\SurArb}{SurArb}
\DeclareMathOperator{\BigArb}{BigArb}
\DeclareMathOperator{\Orb}{Orb}
\DeclareMathOperator{\disc}{disc}
\newtheorem{theorem}{Theorem}[section]
\newtheorem{lemma}[theorem]{Lemma}
\newtheorem{proposition}[theorem]{Proposition}
\newtheorem{proposition-definition}[theorem]{Proposition-Definition}
\newtheorem{corollary}[theorem]{Corollary}
\newtheorem{conjecture}[theorem]{Conjecture}
\theoremstyle{definition}
\newtheorem{definition}[theorem]{Definition}
\theoremstyle{remark}
\newtheorem{remark}{Remark}
\theoremstyle{theorem}
\newtheorem{thm}{Theorem}
\newtheorem{lem}[thm]{Lemma}
\newtheorem{prop}[thm]{Proposition}
\theoremstyle{remark}
\newcommand*{\Scale}[2][4]{\scalebox{#1}{$#2$}}%
\newcommand{\Mod}[1]{\ (\textup{mod}\ #1)}     
\title[Random Arboreal Representations]{Galois groups and prime divisors in random quadratic sequences}   
\author[J. R. Doyle, V. O. Healey, W. Hindes, and R. Jones]{John R. Doyle, Vivian Olsiewski Healey, Wade Hindes, and Rafe Jones}
\begin{document} 
\maketitle
\blfootnote{2020 \emph{Mathematics Subject Classification}: Primary: 11R32, 37P15, 11F80. Secondary: 11D99.}

\begin{abstract} Given a set $S=\{x^2+c_1,\dots,x^2+c_s\}$ defined over a field and an infinite sequence $\gamma$ of elements of $S$, one can associate an arboreal representation to $\gamma$, generalizing the case of iterating a single polynomial. We study the probability that a random sequence $\gamma$ produces a ``large-image'' representation, meaning that infinitely many subquotients in the natural filtration are maximal.  We prove that this probability is positive for most sets $S$ defined over $\mathbb{Z}[t]$, and we conjecture a similar positive-probability result for suitable sets over $\mathbb{Q}$. As an application of large-image representations, we prove a density-zero result for the set of prime divisors of some associated quadratic sequences. We also consider the stronger condition of the representation being finite-index, and we classify all $S$ possessing a particular kind of obstruction that generalizes the post-critically finite case in single-polynomial iteration.
\end{abstract}

\section{Introduction} 
Let $K$ be a field, let $S$ be a fixed set of polynomials over $K$, and let $\gamma=(\theta_1,\theta_2,\dots)$ be an infinite sequence of elements $\theta_i\in S$. Then we are interested in the tower of field extensions $K_n(\gamma):=K(\theta_1\circ\theta_2\circ\dots\circ\theta_n)$, where $K(f)$ denotes the splitting field of $f\in K[x]$ in a fixed algebraic closure $\overline{K}$. In particular, and under some mild separability assumptions, the associated Galois groups $G_{\gamma,n,K}:=\Gal(K_n(\gamma)/K)$ act naturally on the corresponding preimage trees,       
\[T_{\gamma,n}:=\big\{\alpha\in\overline{K}\,:\,\theta_1\circ\theta_2\circ\dots\circ\theta_m(\alpha)=0\; \text{for some $1\leq m\leq n$}\big\}.\]
Here the edge relation is given by the rule: if $\theta_1\circ\dots\circ \theta_m(\alpha)=0$, then there is an edge between $\alpha$ and $\theta_m(\alpha)$. In particular, since Galois groups over $K$ commute with evaluation of polynomials over $K$, the inverse limit of groups
\[G_{\gamma,K}:=\lim_{\longleftarrow} G_{\gamma,n,K}\] 
(whose connecting maps are given by restriction) acts continuously on the complete preimage tree $T_\gamma=\bigcup_{n\geq1} T_{\gamma,n}$. Hence, we obtain an embedding,    
\[G_{\gamma,K}\leq \Aut(T_\gamma),\]
called the \emph{arboreal representation} of $\gamma$ (rooted at $0$); see \cite[\S2]{Ferraguti} and Section \ref{sec:arboreals} below for more details. 

The case of constant sequences (corresponding to iterating a single function) for polynomials of small degree has obtained much interest in recent years; see, for example, \cite{cubic:abc,Ferraguti:quad,Riccati, Jones}. In these cases, it is believed that $G_{\gamma,K}$ is a finite index subgroup of $\Aut(T_\gamma)$ (or a smaller overgroup \cite{arboreal:general}), outside of a moderate list of obstructions. However, for general sets $S$ containing at least two polynomials, there are infinitely many possible sequences each of which furnish their own representations. Moreover in practice, many (or even most) of these sequences avoid the corresponding obstructions to finite index. To test this heuristic, we consider sets of the form $S=\{x^2+c_1,\dots, x^2+c_s\}$, so that the ramification in the fields $K_n(\gamma)$ is controlled by a single semigroup orbit (of the common critical point $0$). Moreover, many of the techniques used for constant sequences \cite{Jones-io, Jones, Jones-Survey} admit suitable generalizations in this case; see \cite[\S6]{Me:LeftRightTotal} and Section \ref{sec:arboreals} below. Finally, to make precise what we mean by ``many sequences'', we fix a probability measure $\nu$ on $S$ and let $\bar{\nu}=\nu^{\mathbb{N}}$ be the product measure on $\Phi_S=S^{\mathbb{N}}$, the set of all infinite sequences of elements of $S$. In particular, a property $P$ holds for ``many'' sequences in $S$ if it holds with positive probability: $\bar{\nu}\big(\{\gamma\in\Phi_S\,:\, \text{$\gamma$ has property $P$}\}\big)>0$.  

A first task with this more general setup is to identify what properties of $S$ are obstructions to producing finite index representations with positive probability. Certainly, as in the case of iterating a single function, if $K_\infty(\gamma)=\bigcup_n K_n(\gamma)$ is a finitely ramified extension of $K$, then $G_{\gamma,K}$ is an infinite index subgroup of $\Aut(T_\gamma)$; see \cite[Theorem 3.1]{Jones-Survey}. In particular, if the full semigroup orbit of $0$ is finite, then the discriminant formula in \cite[Proposition 6.2]{Me:LeftRightTotal} implies infinite index for all sequences. Likewise, with a little background in the theory of probability, one can see that a similar problem will arise with a weaker property: when the semigroup orbit of $0$ \emph{contains} a point whose orbit is finite (even though the full orbit of $0$ may be infinite). However perhaps surprisingly, one can write down a complete list of such sets over the rational numbers, using previous work in \cite{Me:FiniteOrbit} on finite orbit points. In particular, we have the following complete classification of this obstruction to finite index; in what follows, $\Orb_S(Q)$ denotes the full semigroup orbit of the point $Q\in K$ generated by the maps in $S$ under composition. Furthermore, we say $\nu$ is \emph{strictly positive} if $\nu(\phi)>0$ for all $\phi\in S$.  \vspace{.1cm}       
\begin{theorem}\label{thm:obstruction} Let $S=\{x^2+c_1,\dots, x^2+c_s\}$ be a set of quadratic polynomials with rational coefficients, let $\nu$ be a strictly positive probability measure on $S$, and let $\bar{\nu}:=\nu^{\mathbb{N}}$ be the associated product measure on $\Phi_S:=S^{\mathbb{N}}$. Then the following statements hold: \vspace{.1cm}  
\begin{enumerate} 
\item[\textup{(1)}] If $\Orb_S(0)$ contains a finite orbit point, then $S$ is one of the following exceptional sets:\vspace{.1cm} 
\[\qquad\qquad S=\big\{x^2\big\},\; \big\{x^2-1\big\},\; \big\{x^2-2\big\},\; \big\{x^2,\,x^2-1\big\},\;\big\{x^2-2,\, x^2-3\big\}\;\textup{or}\; \big\{x^2-2,\, x^2-6\big\}. \vspace{.15cm}\] 
\item[\textup{(2)}] Let $\Phi_S^{\textup{sep}}\subseteq\Phi_S$ be the set of sequences $\gamma$ such that $\gamma_n$ is separable for all $n$. If $S$ is one of the sets in \textup{(1)}, then \[\bar{\nu}\Big(\big\{\gamma\in\Phi_S^{\textup{sep}}\,:\, [\Aut(T_{\gamma}):G_{\gamma,\mathbb{Q}}]<\infty \big\}\Big)=0. \vspace{.05cm} \] 
\end{enumerate} 
In particular, if $\Orb_S(0)$ contains a finite orbit point for $S$, then a random sequence $\gamma$ furnishes a finite index arboreal representation with probability zero.  
\end{theorem}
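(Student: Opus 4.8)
I would prove the two parts by quite different means: (1) is a classification resting on the study of finite orbit points in \cite{Me:FiniteOrbit}, while (2) is a Galois-theoretic argument that, once the list in (1) is in hand, runs uniformly over all six sets. For (1), suppose $Q\in\Orb_S(0)$ has finite $S$-orbit and put $F:=\Orb_S(Q)$; then $F$ is finite, $\phi(F)\subseteq F$ for every $\phi\in S$, and $F\subseteq\Orb_S(0)$. If $s=1$, an induction on the length of a shortest word taking $0$ to $Q$ reduces to $Q=0$ (if $f^{\ell}(0)=Q$ with $\ell\ge 1$ minimal, then $f^{\ell-1}(0)$ is again a finite orbit point, contradicting minimality), so $0$ is preperiodic under $f=x^2+c$; a denominator estimate forces $c\in\Z$, and then $|f^n(0)|\to\infty$ unless $c\in\{0,-1,-2\}$, giving the three singletons. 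For $s\ge 2$, I would invoke the results of \cite{Me:FiniteOrbit}: the denominator estimate again forces every $c_i\in\Z$, and with $\mu:=\max_{\alpha\in F}|\alpha|$ the invariance $\phi_i(F)\subseteq F$ yields $F\subseteq[-\mu,\mu]\cap\Z$ and $c_i\in[\min F-\mu^2,\,\max F-\mu^2]$; imposing that $0$ actually reach $F$ produces an integer $Q'\notin F$ with $|Q'|$ close to $\mu$ and $\phi(Q')\in F$ for some $\phi\in S$, and a short descent along the path from $0$ to $Q'$ bounds $\mu$. The remaining check is finite and leaves exactly $\{x^2,x^2-1\}$, $\{x^2-2,x^2-3\}$, and $\{x^2-2,x^2-6\}$. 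I expect the main obstacle to be precisely this reduction to a finite search---bounding at once the size of $F$ and the distance from $0$ to $F$---which is where \cite{Me:FiniteOrbit} does the real work.

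For part (2), I would fix one of the six sets $S$. Each admits a map $\theta=x^2+c\in S$ for which $Q:=\theta(0)=c$ has finite $S$-orbit $F:=\Orb_S(Q)$: e.g.\ $(\theta,Q,F)=(x^2-2,\,-2,\,\{-2,2\})$ for $S=\{x^2-2,x^2-6\}$, $(x^2-2,\,-2,\,\{-2,-1,1,2\})$ for $S=\{x^2-2,x^2-3\}$, $(x^2-1,\,-1,\,\{-1,0,1\})$ for $S=\{x^2,x^2-1\}$, and similarly for the singletons. Write $P_n:=\theta_1\circ\cdots\circ\theta_n$ and $v_n:=P_n(0)$. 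For $\gamma\in\Phi_S^{\textup{sep}}$ each $P_n$ is squarefree, and from $P_n=P_{n-1}\circ\theta_n$ we get $K_n(\gamma)=K_{n-1}(\gamma)\bigl(\{\sqrt{\rho-c_{j_n}}:\rho\in P_{n-1}^{-1}(0)\}\bigr)$, where $\theta_n=x^2+c_{j_n}$; by Kummer theory the level-$n$ subquotient $\Gal(K_n/K_{n-1})$ equals the full $(\Z/2)^{2^{n-1}}$ only if these $2^{n-1}$ classes are independent in $K_{n-1}^\times/(K_{n-1}^\times)^2$. Their product is $\prod_\rho(\rho-c_{j_n})=(-1)^{2^{n-1}}P_{n-1}(c_{j_n})=v_n$ for $n\ge 2$, so maximality at level $n$ forces $v_n\notin(K_{n-1}^\times)^2$, while for $n\ge 2$ the element $\prod_\rho\sqrt{\rho-c_{j_n}}$ is a square root of $v_n$ lying in $K_n(\gamma)$. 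The one dynamical input is that if $\theta_m=\theta$ then $v_m=P_{m-1}(c)=\theta_1\circ\cdots\circ\theta_{m-1}(Q)\in\Orb_S(Q)=F$; since $\gamma\in\Phi_S^{\textup{sep}}$ forces $v_m\ne 0$, in fact $v_m\in F\setminus\{0\}$, a finite set. (If $F\setminus\{0\}=\varnothing$, which occurs only for $S=\{x^2\}$, then $\Phi_S^{\textup{sep}}=\varnothing$ and there is nothing to prove.)

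Finally, I would close the proof of (2) with a Borel--Cantelli and pigeonhole argument. As $\nu$ is strictly positive, $\nu(\theta)>0$, so for $\bar\nu$-a.e.\ $\gamma$ there are infinitely many $m\ge 2$ with $\theta_m=\theta$; along these the value $v_m$ ranges over the finite set $F\setminus\{0\}$, so some $v^\ast\in F\setminus\{0\}$ is attained for infinitely many of them, say at $m_1<m_2<\cdots$. At level $m_1$ we obtain a square root of $v^\ast$ inside $K_{m_1}(\gamma)\subseteq K_{m_k-1}(\gamma)$ for every $k\ge 2$, whence $v_{m_k}=v^\ast\in(K_{m_k-1}^\times)^2$ and the level-$m_k$ subquotient fails to be maximal. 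Since every non-maximal level at least doubles $[\Aut(T_{\gamma,n}):G_{\gamma,n,\Q}]$, this index is unbounded, i.e.\ $[\Aut(T_\gamma):G_{\gamma,\Q}]=\infty$ (see \cite[\S6]{Me:LeftRightTotal} and \cite[Theorem 3.1]{Jones-Survey}). Hence for $\gamma\in\Phi_S^{\textup{sep}}$ finite index forces $\theta_m=\theta$ for only finitely many $m$, an event of $\bar\nu$-measure zero; this proves (2), and the final assertion of the theorem follows from (1) and (2).
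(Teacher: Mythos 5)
Your treatment of part (2) is correct and in some ways cleaner than the paper's. The paper splits the six exceptional sets into two groups: the four with $\Orb_S(0)$ itself finite are handled via finite ramification and \cite[Theorem 3.1]{Jones-Survey}, while $\{x^2-2,x^2-3\}$ and $\{x^2-2,x^2-6\}$ are handled by Eisenstein-type stability lemmas (Lemmas \ref{lem:specificstable1}, \ref{lem:specificstable2}), the discriminant formula, Proposition \ref{prop:maximality}, and Borel--Cantelli. You instead run a single Borel--Cantelli/pigeonhole argument uniformly. Your Kummer-theoretic derivation of $\sqrt{v_n}\in K_n(\gamma)$ for $n\ge 2$ (via $\prod_\rho\sqrt{\rho-c_{j_n}}$) is an equally valid alternative to the discriminant computation, and because the implication ``$v_n$ a square in $K_{n-1}(\gamma)$ $\Rightarrow$ non-maximal'' needs no irreducibility of $\gamma_{n-1}$, you avoid the stability lemmas entirely --- a dependence the paper itself flags as probably inessential. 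The handling of the degenerate case $F\setminus\{0\}=\varnothing$ is fine; note $S=\{x^2-1\}$ also has $\Phi_S^{\text{sep}}=\varnothing$ (since $\gamma_2(0)=0$ for the unique sequence), though that is equally vacuous.

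Part (1) has a genuine gap. The valuation argument giving $c_i\in\mathbb{Z}$ is correct (it is Lemma \ref{lem:preper_val} and Proposition \ref{prop:main}), and the $s=1$ case is standard. But for $s\ge 2$ you only gesture at the reduction to a finite search, and you admit as much. Two specific steps are unaccounted for. First, the case $s\ge 3$ is never dealt with: the paper invokes Theorem \ref{thm:nopoints} (i.e.\ \cite[Corollary 1.2]{Me:FiniteOrbit}), which says no set of three or more integral $x^2+c_i$ has \emph{any} rational finite-orbit point. Second, for $s=2$ the ``short descent along the path from $0$ to $Q'$ bounds $\mu$'' is not carried out, and the paper does not do it this way either: you cannot simply punt to \cite{Me:FiniteOrbit}, because what \cite{Me:FiniteOrbit} supplies is the explicit pair of one-parameter families $(c_1,c_2)=\bigl(\tfrac{1-y^2}{4},\tfrac{1-(y+2)^2}{4}\bigr)$ and $\bigl(\tfrac{1-y^2}{4},\tfrac{-3-y^2}{4}\bigr)$ (Lemma \ref{lem:intfinite}), and the paper then feeds these into the escape inequality $|c_i^2+c_j|>|c_1|+|c_2|$ (Lemma \ref{lem:0escapes}, Remark \ref{rem:0escape}) to confine $y$ to a short integer interval. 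Your sketch proposes a conceptually different route --- bounding $\mu=\max_{\alpha\in F}|\alpha|$ directly from $\phi_i(F)\subseteq F$ and an escape estimate along the path from $0$ --- and this idea is plausible, but as written the key inequality chain is not produced, and ``\,$|Q'|$ close to $\mu$\,'' is imprecise (the same escape argument would in fact force the path to blow up past $\mu$ once $\mu\ge 3$, never reaching $F$ at all). Either supply that estimate in full, or follow the paper's route via the parametrized families plus the escape criterion; as it stands, the reduction to a finite check is asserted rather than proved.
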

Although the classification above is a step in the right direction, it is unclear at the moment what (if any) other obstructions to producing finite index representations with positive probability remain; we plan to return to this problem at a later date. On the other hand, there is a weaker and more approachable property than finite index, and in certain circumstances, this property is enough to prove density-zero results for prime divisors in orbits; see Theorem \ref{thm:density} below. Namely, we seek sequences $\gamma=(\theta_n)_{n\geq1}$ for which all of the finite level polynomials $\gamma_n=\theta_1\circ\dots\circ\theta_n$ are irreducible over $K$ and for which the subextensions $K_n(\gamma)/K_{n-1}(\gamma)$ are as large as possible for infinitely many $n$. With this in mind, given a sequence $\gamma$ of quadratic polynomials we say that an extension $K_n(\gamma)/K_{n-1}(\gamma)$ is \emph{maximal} if $[K_n(\gamma):K_{n-1}(\gamma)]=2^{2^{n-1}}$; see Remark \ref{rem:maximal} for justification of this language. Moreover, by analogy with the case of constant sequences \cite[\S4]{Jones}, we say that a sequence $\gamma=(\theta_n)_{n\geq1}$ is \emph{stable} over $K$ if $\gamma_n=\theta_1\circ\dots\circ\theta_n$ is irreducible over $K$ for all $n\geq1$. Finally combining these two notions, we say that a sequence $\gamma$ furnishes a \emph{big arboreal representation} over $K$ if $\gamma$ is stable over $K$ and $K_n(\gamma)/K_{n-1}(\gamma)$ is maximal infinitely often. Moreover, we let   
\vspace{.1cm}
\begin{equation}\label{eq:big}
\BigArb(S,K):=\Big\{\gamma\in\Phi_S\,:\,\text{$\gamma$ is stable over $K$ and $K_n(\gamma)/K_{n-1}(\gamma)$ is maximal i.o.}\Big\}
\end{equation}
be the set of infinite sequences in $\Phi_S$ that furnish big arboreal representations over $K$. In particular, based on analogy with the case of iterating a single function \cite{Ferraguti:quad,Jones-Survey}, heuristics on the growth rates of heights in sequential orbits \cite{Me:stochastic,Me:LeftRightTotal,Kawaguchi}, and unconditional results achieved over $\mathbb{Z}[t]$ below, we conjecture that a positive proportion of sequences furnish big arboreal representations over $\mathbb{Q}$, as long as the generating set $S$ has at least $3$ elements, two of which are irreducible:   

\begin{conjecture}\label{conj2} Let $S=\{x^2+c_1,\dots, x^2+c_s\}$ be a set of quadratic polynomials over $\mathbb{Q}$ and let $\nu$ be any strictly positive probability measure on $S$. Moreover, assume that $S$ contains at least $3$ elements, two of which are irreducible in $\mathbb{Q}[x]$. Then $\bar{\nu}(\BigArb(S,\mathbb{Q}))>0$. 
\end{conjecture}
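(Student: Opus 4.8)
We outline the approach that we expect would establish Conjecture~\ref{conj2}; the argument hinges on a Diophantine input that is a theorem over $\mathbb{Z}[t]$ but is at present only conjectural over $\mathbb{Q}$, which is why we record this as a conjecture. The plan is to transport to $\mathbb{Q}$ the strategy behind the unconditional $\mathbb{Z}[t]$ results proved below. The first step reduces matters to a statement about the \emph{critical orbit} $\delta_n=\delta_n(\gamma):=\gamma_n(0)=\gamma_{n-1}\big(\theta_n(0)\big)\in\mathbb{Q}$. Combining the sequential form of Stoll's criterion (Section~\ref{sec:arboreals}, cf.\ \cite[\S6]{Me:LeftRightTotal}) with the discriminant formula \cite[Proposition~6.2]{Me:LeftRightTotal} --- whereby every prime of $\mathbb{Q}$ ramifying in $\mathbb{Q}_{n-1}(\gamma)$ divides $N_{n-1}:=2\cdot\disc(\theta_1)\cdots\disc(\theta_s)\cdot\delta_1\cdots\delta_{n-1}$ --- one obtains the following sufficient condition for $\gamma\in\BigArb(S,\mathbb{Q})$: the sequence $\gamma$ is stable over $\mathbb{Q}$, and for all sufficiently large $n$ the rational number $\delta_n$ has a \emph{primitive prime divisor of odd multiplicity}, that is, a prime $p\nmid N_{n-1}$ with $v_p(\delta_n)$ odd. (Such a $p$ is unramified in $\mathbb{Q}_{n-1}(\gamma)$ and of odd valuation there, so $\delta_n\notin\mathbb{Q}_{n-1}(\gamma)^2$, which is what the criterion requires at level $n$; moreover $v_p(\delta_n)$ odd gives $\delta_n\notin\mathbb{Q}^2$, which feeds the stability induction.) It therefore suffices to show that the set of $\gamma$ satisfying both conditions has positive $\bar\nu$-measure.

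This is where the hypotheses on $S$ are used. Every set on the list in Theorem~\ref{thm:obstruction}(1) has at most two elements, so $\#S\ge 3$ already forces $\Orb_S(0)$ to contain no finite-orbit point; by the theory of canonical heights along random orbits \cite{Me:stochastic,Me:LeftRightTotal,Kawaguchi} this produces a positive-measure set $\mathcal G\subseteq\Phi_S$ on which the forward orbit of $0$ escapes every finite set and $h(\delta_n(\gamma))=2^n\big(\hat h_\gamma(0)+o(1)\big)$ with $\hat h_\gamma(0)>0$ (here $h$ is the standard logarithmic height). Shrinking $\mathcal G$ by the requirement that $\theta_1$ be one of the two irreducible elements of $S$ (together with finitely many further non-degeneracy conditions at small levels) keeps the measure positive; on this smaller set, establishing the primitive-prime-divisor property for all large $n$ would, via the reduction above, simultaneously yield that $\gamma$ is stable and that $\gamma\in\BigArb(S,\mathbb{Q})$. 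Here the role of having two irreducible generators, and of $\#S\ge 3$, is to provide enough independence in the random orbit for this positive-measure event to be non-empty and to keep the $\delta_n$ growing and varying.

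The remaining step --- producing a primitive prime divisor of odd multiplicity of $\delta_n$ for all large $n$ on a positive-measure set --- is the crux, and it is where the methods over $\mathbb{Z}[t]$ and over $\mathbb{Q}$ diverge. Over $\mathbb{Z}[t]$ one extracts such a factor unconditionally using the Mason--Stothers theorem (the polynomial analogue of $abc$) applied to the identity $\delta_n=\delta_{n-1}+\theta_n(0)^2\,g_{n-1}\!\big(\theta_n(0)\big)$, where $\gamma_{n-1}(x)-\gamma_{n-1}(0)=x^2 g_{n-1}(x)$; this is carried out in the sections below. Over $\mathbb{Q}$ one expects the same architecture with $abc$ in place of Mason--Stothers; but along $\mathcal G$ the square part of $\delta_n$ and the product of the primes dividing $N_{n-1}$ are both large enough that a direct appeal to $abc$ applied to the identity above does not, by itself, force a new prime into $\delta_n$ to odd order. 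One would instead need a sharper dynamical input, bounding how quickly the prime divisors of $\delta_1,\delta_2,\dots$ can accumulate --- in the spirit of the $abc$-conditional primitive-prime-divisor theorems for orbits, but adapted to random sequences rather than iteration of a single map, and to odd multiplicity rather than mere divisibility. We do not know how to supply this even under $abc$, and an unconditional proof seems well out of reach by present methods. For these reasons we record the statement as a conjecture, with the $\mathbb{Z}[t]$ theorem proved below and the height-growth heuristics of \cite{Me:stochastic,Me:LeftRightTotal,Kawaguchi} as the principal supporting evidence.
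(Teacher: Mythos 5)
This is a conjecture, not a theorem; the paper offers no proof, only the supporting evidence listed just before the statement (analogy with single-polynomial iteration, height-growth heuristics from \cite{Me:stochastic,Me:LeftRightTotal,Kawaguchi}, and the unconditional $\mathbb{Z}[t]$ results of Theorem~\ref{thm:nonmonic}). You correctly recognize this and present a programme rather than a proof, and your high-level reduction --- stability via Proposition~\ref{prop:stability}, maximality via a primitive prime divisor of $\gamma_n(0)$ of odd valuation, and positive measure via conditioning on the initial map and using the growth of $h(\gamma_n(0))$ --- is broadly the architecture the paper has in mind.

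Two points of friction with the paper are worth flagging. First, you attribute the $\mathbb{Z}[t]$ primitive-prime-divisor extraction to Mason--Stothers (polynomial $abc$), but that is not how Section~\ref{sec:Z[t]} proceeds: the paper gets square-freeness of $\gamma_n(0)$ from a reduction-mod-$2$-and-differentiate trick (Lemma~\ref{lem:square-free}, Lemma~\ref{lem:iteratessquare-free}) and then extracts a primitive odd-valuation prime from square-freeness plus a purely elementary degree comparison (Lemma~\ref{lem:degree}, Lemma~\ref{lem:primdiv}); the $abc$ machinery is only mentioned as a point of comparison, not used. This matters because the genuine obstacle over $\mathbb{Q}$ is the absence of an analogue of that cheap square-freeness device, not the absence of $abc$ per se. Second, your sufficient condition asks for a primitive odd-multiplicity prime in $\delta_n$ for \emph{all sufficiently large} $n$; but $\BigArb(S,\mathbb{Q})$ only requires maximality \emph{infinitely often}, and it is precisely this weakening that Corollary~\ref{cor:i.o.} exploits over $\mathbb{Z}[t]$: one conditions on $\theta_1$ being a ``good'' map (positive $\nu$-measure) and then Borel--Cantelli guarantees that a fixed good map recurs as $\theta_n$ with probability one, giving maximality i.o.\ on a positive-measure set without any control for all $n$. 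Adopting the ``for all large $n$'' formulation hides where the positive probability actually comes from and asks for more than the conjecture requires.

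With those two corrections your sketch accurately reflects what is known, what is merely heuristic, and why the statement remains open; in particular your identification of odd-multiplicity primitive primes of the critical-orbit values $\gamma_n(0)$ as the crux, and of the height-growth input as what must substitute for the degree count of Lemma~\ref{lem:degree}, matches the paper's framing.
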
 
\begin{remark} It was recently shown in \cite{Mathworks} that if $S=\{x^2+c_1,\dots, x^2+c_s\}$ for some $c_i\in\mathbb{Z}$, then the $\bar{\nu}$ measure of the set of $\mathbb{Q}$-stable sequences in $S$ is positive. In particular, some progress on Conjecture \ref{conj2} has been made for quadratic polynomials with integral coefficients.  
\end{remark} 
To give some evidence for Conjecture \ref{conj2}, we replace $\mathbb{Q}$ with the polynomial ring $\mathbb{Z}[t]$ and prove a similar statement for ``most sets'' $S$ in this setting, at least if the cardinality of $S$ is large enough. To make this idea precise, we fix some notation. Given a polynomial $f=a_dt^d+\dots +a_1t+a_0\in\mathbb{Z}[t]$, we define $|f|=\max_{0\leq i\leq d}\{|a_i|\}$ to be the maximum absolute value of $f$'s coefficients, and set \vspace{.05cm} 
\[P_d(B)=\{f\in\mathbb{Z}[t]\,:\;\deg(f)\leq d\;\text{and}\; |f|\leq B\}. \vspace{.05cm} \]  
Likewise for any fixed $s\geq1$, define \vspace{.1cm}  
\begin{equation}\label{eq:sets}
\mathcal{S}(d,s,B):=\{\{c_1,\dots,c_s\}\,:\, c_i\in P_d(B)\} \vspace{.1cm}
\end{equation} 
to be the collection of sets with $s$-elements chosen from $P_d(B)$. In particular, given an element $\{c_1,\dots,c_s\}\in \mathcal{S}(d,s,B)$ we associate a set of quadratic polynomials with coefficients in $\mathbb{Z}[t]$, \vspace{.05cm}  
\[ S=S\big(\{c_1,\dots,c_s\}\big)=\{x^2+c_1,\dots, x^2+c_s\}, \vspace{.05cm} \] 
and study the sequences in $S$ furnishing big representations over $K=\mathbb{Q}(t)$. In particular, we prove that for any fixed $d$ and large $s$ (depending on $d$), most sets in $\mathcal{S}(d,s,B)$ furnish big arboreal representations with positive probability as $B\rightarrow\infty$. That is, an analog of Conjecture \ref{conj2} holds for almost all sets $S=\{x^2+c_1,\dots, x^2+c_s\}$ with $\deg(c_i)\leq d$ (asymptotically full density in $\mathcal{S}(d,s,B)$ as $B\rightarrow\infty$) in the large $s$ limit: \vspace{.1cm} 
\begin{restatable}{theorem}{thmnonmonic}
\label{thm:nonmonic}
Let $d>0$ and $s\geq2$, let $\BigArb(S)$ and $\mathcal{S}(d,s,B)$ be as in \eqref{eq:big} and \eqref{eq:sets} above for $K=\mathbb{Q}(t)$, and let \vspace{.1cm}   
\[r_d:=
\begin{cases} 
      \big(\frac{1}{2}\big)^{\frac{d}{2}+1} & \text{$d$ is even,} \\[3pt]
       \big(\frac{1}{2}\big)^{\frac{d+1}{2}} & \text{$d$ is odd.}
   \end{cases}
\]
Then the following statements hold: \vspace{.2cm}
\begin{enumerate} 
\item[\textup{(1)}] If $d$ is even, then \vspace{.15cm}
\[\liminf_{B\rightarrow\infty}\frac{\#\Big\{S\in\mathcal{S}(d,s,B)\,:\,\bar{\nu}_S\big(\BigArb(S,K)\big)>0\Big\}}{\#\mathcal{S}(d,s,B)}\geq 1-(1-r_d)^s. \vspace{.25cm}\]  
\item[\textup{(2)}] If $d$ is odd, then \vspace{.2cm} 
\[\;\;\displaystyle{\liminf_{B\rightarrow\infty}}\frac{\#\Big\{S\in \mathcal{S}(d,s,B)\,:\,\bar{\nu}_S\big(\BigArb(S,K)\big)>0\Big\}}{\#\mathcal{S}(d,s,B)}\geq 1-(1-r_{d})^s-\Big(\frac{1}{2}\Big)^s+\Big(1-r_d-\frac{1}{2}\Big)^s. 
\vspace{.4cm}\]  
\end{enumerate} 
In particular, when $d\geq1$ and $s\geq2$ are fixed, the number of sets $S=\{x^2+c_1,\dots,x^2+c_s\}$ with $\deg(c_i)\leq d$ and which furnish with positive probability big arboreal representations approaches full density (in the set of all possible $S=\{x^2+c_1,\dots,x^2+c_s\}$ with $\deg(c_i)\leq d$) as $s$ grows.\vspace{.1cm} 
\end{restatable}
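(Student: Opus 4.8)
The plan is to single out an explicit ``good family'' $\mathcal G_d\subseteq P_d(B)$ of center polynomials and reduce the statement to (a) a dynamical claim: a single good center already forces a positive-probability set of sequences to lie in $\BigArb(S,\mathbb Q(t))$; and (b) two elementary counts: the density of $\mathcal G_d$ in $P_d(B)$, and the proportion of $s$-element subsets of $P_d(B)$ meeting it. When $d$ is odd the dynamical claim requires a coordinated pair of good centers drawn from two disjoint families, which is exactly what produces the extra terms in part (2).

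For (a), I would first extract from the discriminant formula of \cite[\S6]{Me:LeftRightTotal}, together with the sequential analogues of the maximality criteria of Stoll and Jones, a sufficient condition for bigness: a sequence $\gamma=(\theta_n)_{n\ge 1}$ over $K=\mathbb Q(t)$ lies in $\BigArb(S,K)$ provided (i) $(-1)^{2^{n-1}}\gamma_n(0)$ is a non-square in $\mathbb Q(t)$ for every $n$, and (ii) the critical-orbit polynomials $\gamma_n(0)=\gamma_{n-1}(\theta_n(0))\in\mathbb Z[t]$ admit, for all large $n$, a primitive prime divisor --- a place $\mathfrak p$ with $v_{\mathfrak p}(\gamma_n(0))$ odd, $v_{\mathfrak p}(\gamma_m(0))$ even for $m<n$, and $\mathfrak p$ unramified in $K_{n-1}(\gamma)/K$. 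Condition (i) yields stability by induction on $n$ via the norm identity $N_{K_{n-1}(\gamma)/K}(\beta-\theta_n(0))=(-1)^{2^{n-1}}\gamma_n(0)$ for a root $\beta$ of $\gamma_{n-1}$, and (i)--(ii) then force the filtration quotients to be maximal for all large $n$. The good event in $\Phi_S$ is taken to be ``$\theta_1=x^2+c_{i_0}$'' for a prescribed $c_{i_0}\in\mathcal G_d$ (and, for odd $d$, also ``$\theta_2=x^2+c_{j_0}$'' for $c_{j_0}$ in the second family), which has $\bar\nu_S$-measure at least $\nu(x^2+c_{i_0})>0$; the content is that on this event (i) and (ii) hold \emph{deterministically for every continuation}. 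For (i) one specializes $t\mapsto 0$: since each $\gamma_n(0)$ then reduces to an integer whose outermost operation is $w\mapsto w^2+c_{i_0}(0)$, the congruence $c_{i_0}(0)\equiv 2\pmod 4$ makes every such reduction $\equiv 2$ or $3\pmod 4$, hence never a rational square, so $\gamma_n(0)$ is a non-square in $\mathbb Q(t)$ uniformly in the tail. For (ii) one uses that $\deg_t\gamma_n(0)$ roughly doubles at each step: a polynomial $abc$ (Mason--Stothers) estimate, unconditional over $\mathbb Q(t)$, shows that the bulk of $\gamma_n(0)$ cannot be a square times primes already present in the much-smaller-degree terms $\gamma_1(0),\dots,\gamma_{n-1}(0)$, so a new odd-order prime appears; the remaining sign and congruence conditions defining $\mathcal G_d$ (and, for odd $d$, the second center, which repairs the parity of $\gamma_n(0)$ at the place at infinity) are precisely what force this new prime to be unramified in $K_{n-1}(\gamma)$. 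Hence $\bar\nu_S(\BigArb(S,\mathbb Q(t)))>0$ whenever $S$ contains a good center (and, for odd $d$, also a center of the second type).

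For (b), the family $\mathcal G_d$ is cut out by $\lfloor d/2\rfloor+1$ independent binary constraints (a congruence condition on the constant coefficient plus sign constraints on $\lfloor d/2\rfloor-1$ further coefficients and on the leading coefficient), so a direct lattice-point count gives $\#(\mathcal G_d\cap P_d(B))/\#P_d(B)\to r_d$ as $B\to\infty$; the second family is cut out by a single sign condition, has density $1/2$, and is disjoint from $\mathcal G_d$. Since $\#\mathcal S(d,s,B)=\binom{\#P_d(B)}{s}$ and the number of $s$-subsets of $P_d(B)$ that avoid a fixed subset of limiting density $\rho$ is asymptotic to $(1-\rho)^s\binom{\#P_d(B)}{s}$ as $B\to\infty$, the fraction of $S\in\mathcal S(d,s,B)$ meeting $\mathcal G_d$ tends to $1-(1-r_d)^s$, giving part (1); for part (2), inclusion--exclusion applied to the two events ``$S$ meets $\mathcal G_d$'' and ``$S$ meets the density-$1/2$ family'' (over disjoint families) gives the limiting fraction $1-(1-r_d)^s-(1/2)^s+(1-r_d-1/2)^s$. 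The $s$-subsets with a repeated center contribute only $o(1)$ for fixed $s$, so these limits are realized by the $\liminf$; the final assertion follows because both lower bounds tend to $1$ as $s\to\infty$ with $d$ fixed.

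The main obstacle is the dynamical claim in (a), specifically verifying condition (ii) --- the infinitely many primitive prime divisors, of odd order, new, and unramified --- uniformly over \emph{every} continuation of the fixed good prefix; the stability half is comparatively clean once the specialization $t\mapsto 0$ is combined with the norm criterion of \cite[\S6]{Me:LeftRightTotal}, as $w^2+c_{i_0}(0)$ is manifestly never a rational square when $c_{i_0}(0)\equiv 2\pmod 4$. Making the polynomial $abc$ bound interact correctly with the precise shape of $\mathcal G_d$ --- so that the new prime is genuinely new, of odd order, and unramified at every large level and for every tail --- is where the real work lies, and it is also what pins down the value of $r_d$ and forces the use of a second center when $d$ is odd.
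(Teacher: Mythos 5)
There is a genuine gap. You claim that, after fixing the prefix $\theta_1=x^2+c_{i_0}$ (and, for odd $d$, $\theta_2=x^2+c_{j_0}$), conditions (i) and (ii) hold \emph{deterministically for every continuation}. The stability half (i) is fine — it is controlled by the \emph{outermost} map $\theta_1$, since $\gamma_n(0)=(\cdots)^2+\theta_1(0)$, and the paper indeed obtains this from a derivative-mod-$2$ condition on $c_j=\theta_1(0)$ (Lemma~\ref{lem:iteratessquare-free}(1) and Proposition~\ref{prop:stability}). But the maximality half (ii) fails: the quantity that drives the degree growth of $\gamma_n(0)$ (and hence the existence of a new odd-order prime via Lemma~\ref{lem:primdiv}) is $\deg\theta_n(0)$, the \emph{innermost} polynomial of $\gamma_n=\theta_1\circ\cdots\circ\theta_n$. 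If the tail of the sequence never again selects a center of maximal degree $d$, then $\deg\gamma_n(0)$ can be strictly less than $d\cdot 2^{n-1}$ and the divisibility comparison with $\gamma_1(0)\cdots\gamma_{n-1}(0)$ that produces a primitive prime collapses. Consequently maximality at level $n$ is \emph{not} a consequence of the first one or two coordinates of $\gamma$; it genuinely depends on $\theta_n$. No Mason--Stothers argument restricted to a fixed prefix can rescue this, because the bad continuations are simply the ones that avoid the large-degree center.

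This is exactly why the paper's Theorem~\ref{thm:tool} imposes $\theta_n(0)=c_k$ at level $n$, and why Corollary~\ref{cor:i.o.} then invokes Borel--Cantelli (the Monkey and Typewriter estimate) on the event ``$\theta_n=\phi_k$ infinitely often,'' which has measure $1$ — not a deterministic statement, but a probability-one one. The correct good event is therefore the intersection of the positive-measure cylinder $\{\theta_1=\phi_j\}$ with the measure-one tail event $\{\theta_n=\phi_k\ \text{i.o.}\}$. Relatedly, placing the second center at $\theta_2$ for odd $d$ is misdirected: $\theta_2$ is a middle term in the composition and controls neither stability (that is $\theta_1$) nor the degree of $\gamma_n(0)$ at large $n$ (that is $\theta_n$). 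For odd $d$ the two disjoint properties the paper tracks are ``$\frac{d}{dt}(\bar c)=1$ in $\mathbb F_2[t]$'' (used at $\theta_1$) and ``maximal degree with odd leading term'' (used at the innermost $\theta_n$, infinitely often), and a single center cannot satisfy both because for odd $d>1$ the first forces $a_d$ even while the second forces $a_d$ odd; this is what produces the inclusion--exclusion correction. Your counting in (b) is essentially right and matches the paper's, and your heuristic for $r_d$ is on target, but the dynamical reduction in (a) needs to be reorganized around the i.o.\ event rather than a fixed prefix, and around square-freeness of $\gamma_n(0)$ (Lemmas~\ref{lem:square-free}--\ref{lem:primdiv}) rather than a polynomial $abc$ bound.
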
 
\begin{remark} Likewise, given a set $S$ we can study the sequences in $S$ furnishing surjective arboreal representation over $K=\mathbb{Q}(t)$. In particular, if we assume (for technical reasons only) that the defining polynomials in $S$ are \emph{monic and of even degree}, then we prove surjectivity with positive probability for most sets over $\mathbb{Z}[t]$; see Theorem \ref{thm:monic} in Section \ref{sec:Z[t]} below.   
\end{remark}   
Our results over $\mathbb{Z}[t]$ are based upon the following convenient maximality test for sets. Interestingly, the strategy of the proof of the statement below builds upon an earlier argument in \cite[Theorem 1.3]{Me:AvgZig}, which proves that the Galois groups of the iterates of the specific polynomials $\phi(t)=x^d+t$ for $d\geq 2$ are the full wreath product of cyclic groups of order $d$. However, in this case one must first adjoin the $d$-th roots of unity to the the base field $\mathbb{Q}(t)$.      
\begin{restatable}{theorem}{thmtool}
\label{thm:tool}
Let $S=\{x^2+c_1,\dots, x^2+c_s\}$ for some polynomials $c_i\in \mathbb{Z}[t]$ and suppose that the following conditions hold:\vspace{.15cm}
\begin{enumerate} 
\item[\textup{(1)}] Some $c_j$ satisfies $\frac{d}{dt}(\overline{c_j})=1$ in $\mathbb{F}_2[t]$. \vspace{.2cm} 
\item[\textup{(2)}] Some $c_k$ with odd leading term satisfies $\deg(c_k)=\max\{\deg(c_1), \dots,\deg(c_s)\}$. \vspace{.15cm} 
\end{enumerate} 
Then $\gamma=(\theta_n)_{n\geq1}$ is $\mathbb{Q}(t)$-stable and $K_n(\gamma)/K_{n-1}(\gamma)$ is maximal if $\theta_1(0)=c_j$ and $\theta_n(0)=c_k$.
\end{restatable}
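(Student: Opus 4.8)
\emph{Proof proposal.} Put $K=\mathbb{Q}(t)$, let $D=\deg c_k=\max_i\deg c_i$, and let $a_m\in\mathbb{Z}[t]$ be the $m$th iterate of $0$ under $x^2+c_k$, so $a_0=0$ and $a_m=a_{m-1}^2+c_k$. The sequence in question is $\theta_1=x^2+c_j$, $\theta_i=x^2+c_k$ for $i\ge2$, so $\gamma_n=\gamma_1\circ(x^2+c_k)^{\circ(n-1)}$ and therefore
\[
\gamma_n(0)=a_{n-1}^2+c_j\qquad(n\ge1).
\]
By the extension to sequences of Stoll's criterion for maximal quadratic towers (the sequential form of the statements recalled in \cite{Jones-Survey}; see \cite[\S6]{Me:LeftRightTotal}), $\gamma$ is $K$-stable and every $K_n(\gamma)/K_{n-1}(\gamma)$ is maximal once the square classes of
\[
-1,\ \gamma_1(0),\ \gamma_2(0),\ \gamma_3(0),\ \dots
\]
are independent in $K^*/(K^*)^2$. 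This is a convenient sufficient form, since it forces independence of the sign-adjusted critical-orbit classes that literally appear in the criterion, and the generator $-1$ here is precisely the $d=2$ avatar of the roots of unity adjoined in \cite[Theorem 1.3]{Me:AvgZig}. So the task reduces to proving this independence.

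Next I would unpack the two hypotheses. Since $\tfrac{d}{dt}(h^2)=0$ in $\mathbb{F}_2[t]$ for all $h$, reducing mod $2$ and using (1) gives
\[
\tfrac{d}{dt}\,\overline{\gamma_n(0)}=\tfrac{d}{dt}\big(\overline{a_{n-1}}^2+\overline{c_j}\big)=\tfrac{d}{dt}\,\overline{c_j}=1\qquad\text{in }\mathbb{F}_2[t],\ \text{for every }n\ge1.
\]
Hence every $\overline{\gamma_n(0)}$ is nonzero (so $\gamma_n(0)$ has odd content), is squarefree in $\mathbb{F}_2[t]$, and is not a square. Hypothesis (2) controls degrees: as the leading coefficient of $c_k$ is odd, $\deg\overline{c_k}=D$, so by induction $\deg\overline{a_m}=2^{m-1}D$ and $\deg\overline{\gamma_m(0)}=2^{m-1}D$ for $m\ge2$ (the term $a_{m-1}^2$ dominates $c_j$, whose degree is $\le D$), while $1\le\deg\overline{\gamma_1(0)}=\deg\overline{c_j}\le D$; moreover the leading coefficient of $\gamma_m(0)$ in $t$ is the $2^{m-1}$th power of that of $c_k$, again odd. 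In particular
\[
\deg\overline{\gamma_m(0)}=2^{m-1}D\ >\ (2^{m-1}-1)D\ \ge\ \sum_{n=1}^{m-1}\deg\overline{\gamma_n(0)}\qquad(m\ge2).
\]

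With this in hand I would run a ``new prime at every level'' argument. Fix $m\ge2$. Since $\overline{\gamma_m(0)}$ is squarefree and its degree exceeds that of $\prod_{n<m}\overline{\gamma_n(0)}$, it has a monic irreducible factor $\pi_m\in\mathbb{F}_2[t]$ dividing none of $\overline{\gamma_1(0)},\dots,\overline{\gamma_{m-1}(0)}$. Write $\gamma_m(0)=c\cdot\prod_iQ_i^{e_i}$ with $c\in\mathbb{Z}$ its content and $Q_i\in\mathbb{Z}[t]$ primitive irreducible; since $\gamma_m(0)$ has odd content and odd leading coefficient, $\deg\overline{Q_i}=\deg Q_i\ge1$, and squarefreeness of $\overline{\gamma_m(0)}$ then forces every $e_i=1$ and the $\overline{Q_i}$ pairwise coprime. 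So $\pi_m\mid\overline{Q}$ for exactly one of them, say $Q$, whence $v_Q(\gamma_m(0))=1$; moreover $v_Q(\gamma_n(0))=0$ for $n<m$ (otherwise $Q\mid\gamma_n(0)$ in $\mathbb{Z}[t]$ and $\pi_m\mid\overline Q\mid\overline{\gamma_n(0)}$), and $v_Q(-1)=0$. Therefore $\gamma_m(0)\notin\langle -1,\gamma_1(0),\dots,\gamma_{m-1}(0)\rangle$ in $K^*/(K^*)^2$. For $m=1$: if $c_j=\pm h^2$ with $h\in K^*$ then $h\in\mathbb{Q}[t]$ with $v_2(h)=0$, so $\overline{c_j}=\overline h^2$ in $\mathbb{F}_2[t]$, contradicting non-squareness of $\overline{c_j}$; and $-1\notin(K^*)^2$. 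Hence $-1,\gamma_1(0),\gamma_2(0),\dots$ are independent in $K^*/(K^*)^2$, and the criterion finishes the proof.

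The main obstacle is not the mod-$2$ computation, which is short once one notices that (1) forces $\tfrac{d}{dt}\,\overline{\gamma_n(0)}=1$; it is, rather, pinning down the precise sequential Stoll-type criterion — in particular the descent detecting nontriviality of $\gamma_n(0)$ in $K_{n-1}(\gamma)^*/(K_{n-1}(\gamma)^*)^2$ already in $K^*/(K^*)^2$, which rests on the maximality of the lower levels and a norm computation — together with the care needed to transfer independence from $\mathbb{F}_2(t)$ back to $\mathbb{Q}(t)$. Hypothesis (1) is exactly what makes the latter work (squarefreeness plus odd content of each $\gamma_n(0)$), and hypothesis (2) is exactly what keeps the degrees from collapsing modulo $2$, so that a genuinely new prime appears at every level; the remainder is bookkeeping.
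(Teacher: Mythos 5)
Your proposal proves a weaker statement than the theorem asserts, and the gap matters for how the theorem is subsequently used. The theorem concerns an arbitrary sequence $\gamma\in\Phi_S$ with $\theta_1(0)=c_j$ and, at one particular level $n$, $\theta_n(0)=c_k$; the intermediate maps $\theta_2,\dots,\theta_{n-1}$ are unconstrained. (This is exactly what Corollary \ref{cor:i.o.} needs: sequences where $\theta_n=\phi_k$ infinitely often but the remaining terms are arbitrary.) You instead fix the specific sequence $\theta_1=x^2+c_j$, $\theta_i=x^2+c_k$ for all $i\ge2$, writing $\gamma_n(0)=a_{n-1}^2+c_j$ with $a_m$ the iterates of $0$ under $x^2+c_k$. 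That identity and the degree equality $\deg\overline{\gamma_m(0)}=2^{m-1}D$ \emph{for every} $m$ hold only because of this choice. So as written you have established one instance, not the theorem.

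The second issue is the ``sequential Stoll'' criterion you invoke: stability and maximality \emph{at every level} provided $-1,\gamma_1(0),\gamma_2(0),\dots$ are independent in $K^*/(K^*)^2$. That form of the criterion (the Kummer-theoretic one, tracing squares of $K$ through $K_{n-1}(\gamma)$) requires that all the extensions $K_m(\gamma)/K_{m-1}(\gamma)$ for $m<n$ already be maximal, so that the subgroup of $K^*/(K^*)^2$ that becomes trivial in $K_{n-1}(\gamma)$ is exactly $\langle -\gamma_1(0),\gamma_2(0),\dots,\gamma_{n-1}(0)\rangle$. In your constant-tail special case all lower levels are maximal, so nothing breaks. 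But in the generality the theorem actually claims, the intermediate levels need not be maximal (some $\theta_m$ with $\deg\theta_m(0)<D$ can easily fail the degree argument), and then the Kummer-theoretic descent fails. The paper avoids this by using Theorem \ref{thm:GaloisMax}, a ramification argument: if an irreducible $\p\in k[t]$ has $v_\p(\gamma_m(0))=0$ for $m<n$ and $v_\p(\gamma_n(0))$ odd, then $\p$ is unramified in $K_{n-1}(\gamma)/K$ (it does not divide the discriminant), so $\gamma_n(0)$ has odd $\p$-valuation in $K_{n-1}(\gamma)$ and cannot be a square there. This needs only that $\gamma_{n-1}$ be irreducible, not that lower levels be maximal, and that flexibility is exactly what the general statement requires.

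The core computations are right and match the paper's Lemmas \ref{lem:square-free}--\ref{lem:primdiv}: the observation that $\frac{d}{dt}\overline{\gamma_n(0)}=\frac{d}{dt}\overline{c_j}=1$ since $\gamma_n(0)=z^2+c_j$, hence $\gamma_n(0)$ is squarefree (via the odd leading term, though you argue through $\mathbb{F}_2[t]$ factorization rather than the paper's Gauss-lemma argument in $\mathbb{Z}[t]$); and the degree count $\deg\gamma_n(0)=2^{n-1}D>\sum_{m<n}\deg\gamma_m(0)$ forcing a new prime. To repair the proof: drop the constant-tail assumption, note that for arbitrary $\theta_2,\dots,\theta_{n-1}\in S$ one still has $\gamma_n(0)=z^2+c_j$ for some $z\in\mathbb{Z}[t]$ (so squarefreeness and non-squareness hold), that $\deg\gamma_m(0)\le 2^{m-1}D$ always while $\deg\gamma_n(0)=2^{n-1}D$ when $\theta_n(0)=c_k$ (Lemma \ref{lem:degree}), and replace the Kummer-theoretic criterion with Theorem \ref{thm:GaloisMax}.
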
   
\begin{remark} This result can be applied to many singleton sets as well. For example, Theorem \ref{thm:tool} implies that the arboreal representations of $\phi(x)=x^2+t$ and $\phi(x)=x^2+(t^2-3t)$ are surjective over $\mathbb{Q}(t)$. On the other hand, it also implies surjectivity with positive probability for sequences generated by many non-singelton sets, like $S=\big\{x^2+(t^4+5t), x^2-(7t^4+3)\big\}$.     
\end{remark} 
\begin{remark} The idea behind the proof of Theorem \ref{thm:tool} is the following: conditions (1) and (2) together imply that $\gamma_n(0)$ is square-free in $\mathbb{Q}(t)$ for all $n\geq1$. Moreover, the degree condition in (2) coupled with the fact that $\gamma_n(0)$ is square-free implies that $\gamma_n(0)$ has a \emph{primitive prime divisor appearing to odd valuation}; compare to \cite[Theorem 3.3]{Jones} or \cite{abc:primdiv}. The claim then follows from a generalization of Stoll's original maximality criterion \cite[Lemma 1.6]{Stoll}; see Theorem \ref{thm:GaloisMax} below.    
\end{remark}
Finally, as motivation for Conjecture \ref{conj2}, Theorem \ref{thm:nonmonic}, and the study of big arboreal representations in general, we prove a density-zero result for the set of  prime divisors of some associated quadratic sequences. To state this result, let $K$ be a number field and let $\O_K$ be the ring of integers in $K$. Then for $\gamma=(\theta_n)_{n\geq1}$ with $\theta_i \in K[x]$, we consider sequences in $K$ of the form $(\gamma_n(a_0))_{n \geq 0}$, where $a_0 \in K$, $\gamma_0(x) = x$, and $\gamma_n(x) = (\theta_1 \circ \cdots \circ \theta_n)(x)$ for $n \geq 1$. In particular, we are interested in the set of prime ideal divisors of these sequences, namely \vspace{.1cm} 
\[P(\gamma, a_0) := \{\p \subset \O_K : \text{$\p$ is prime and $\p \mid \gamma_n(a_0)$ for at least one $n \geq 0$ with $\gamma_n(a_0) \neq 0$}\}. \vspace{.1cm}\] 
More specifically, we would like to measure the size of  $P(\gamma, a_0)$ by computing its density; recall that the natural density of a set $T$ of primes in $\O_K$ is \vspace{.05cm} 
\begin{equation*} 
D(T) = \lim_{x \to \infty} \frac{\#\{\p \in T : N(\p) \leq x\}}{\#\{\p : N(\p) \leq x\}}, \vspace{.05cm} 
\end{equation*}
provided that this limit exists. Here $N(\p)$ denotes the norm of $\p$. In particular, we prove that $P(\gamma, a_0)$ has density zero whenever $S = \{x^2 + c_1, \ldots, x^2 + c_s\}$ and $\gamma$ furnishes a big arboreal representation over $K$; compare to \cite[Theorem 1.3]{Jones-io}.

 \vspace{.1cm}
\begin{restatable}{theorem}{thmdensity}
\label{thm:density}
Let $K$ be a number field and let $S = \{x^2 + c_1, \ldots, x^2 + c_s\}$ with $c_i \in K$. Suppose that $\gamma \in \textup{BigArb}(S,K)$. Then $D(P(\gamma, a_0)) = 0$ for any $a_0 \in K$. \vspace{.1cm} 
\end{restatable}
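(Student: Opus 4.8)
The plan is to bound the density of $P(\gamma,a_0)$ from above by a sequence of fixed-point proportions in the finite Galois groups $G_{\gamma,n,K}$ via the Chebotarev density theorem, and then to show that those proportions tend to $0$ by exploiting the maximal levels through a critical Galton--Watson / martingale argument, in the spirit of \cite[Theorem 1.3]{Jones-io}.

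First I would set up the Chebotarev reduction. Fix $n\geq 1$. For every $m\geq n$ one may factor $\gamma_m=\gamma_n\circ\psi_{n,m}$ with $\psi_{n,m}:=\theta_{n+1}\circ\cdots\circ\theta_m\in K[x]$, so that $\gamma_m(a_0)=\gamma_n\big(\psi_{n,m}(a_0)\big)$ and $\psi_{n,m}(a_0)\in K$. Hence if a prime $\p$ divides $\gamma_m(a_0)$ for some $m\geq n$, then $\gamma_n$ acquires a root in the residue field $\mathcal{O}_K/\p$, namely the reduction of $\psi_{n,m}(a_0)$. Discarding the finitely many primes dividing $\disc(\gamma_n)$, ramifying in $K_n(\gamma)/K$, or lying below one of the (finitely many, nonzero) values $\gamma_m(a_0)$ with $m<n$ --- here stability guarantees that each $\gamma_m$ is separable --- the reduction-mod-$\p$ dictionary identifies ``$\gamma_n$ has a root mod $\p$'' with ``$\mathrm{Frob}_{\p}$ fixes a vertex at level $n$ of $T_\gamma$''. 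Thus
\[
P(\gamma,a_0)\ \subseteq\ (\text{finite set})\ \cup\ \big\{\p\ :\ \mathrm{Frob}_{\p}\text{ fixes some vertex at level }n\text{ of }T_\gamma\big\},
\]
and Chebotarev bounds the upper density of $P(\gamma,a_0)$ by
\[
\FPP_n\ :=\ \frac{\#\{g\in G_{\gamma,n,K}\ :\ g\text{ fixes at least one vertex at level }n\}}{\#G_{\gamma,n,K}}.
\]
Since a fixed vertex at level $n+1$ forces its parent to be fixed, $\FPP_{n+1}\leq\FPP_n$, and it remains to prove $\lim_{n\to\infty}\FPP_n=0$.

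For this I would argue via branching. Let $g$ be Haar-random in $G_{\gamma,K}$ and let $Y_n$ count the vertices at level $n$ fixed by $g$, so that $\FPP_n=\mathbb{P}(Y_n\geq 1)$. Stability makes $G_{\gamma,n,K}$ transitive on the $2^n$ vertices at level $n$, so each is fixed with probability $2^{-n}$ and $\mathbb{E}[Y_n]=1$ for all $n$. At each level $n_k$ where $K_{n_k}(\gamma)/K_{n_k-1}(\gamma)$ is maximal --- infinitely many $k$, since $\gamma\in\BigArb(S,K)$ --- the relative Galois group is the full product $(\mathbb{Z}/2)^{2^{n_k-1}}$, with one factor acting on the two children of each vertex at level $n_k-1$; so, conditional on the action of $g$ on level $n_k-1$, the fixed vertices at level $n_k-1$ independently keep both or neither of their children, each with probability $\tfrac12$. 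In other words $Y_{n_k}$ arises from $Y_{n_k-1}$ by one step of the critical Galton--Watson process with offspring law $2\cdot\mathrm{Bernoulli}(\tfrac12)$, and $\mathbb{P}\big(Y_{n_k}=0\mid\text{level-}(n_k-1)\text{ action}\big)=2^{-Y_{n_k-1}}$. Were $\FPP_n\not\to 0$, the survival event $A=\{Y_n\geq 1\ \text{for all }n\}$ would have positive measure; applying the conditional Borel--Cantelli lemma along the maximal levels forces $Y_{n_k-1}\to\infty$ on $A$ (otherwise the probabilities $2^{-Y_{n_k-1}}$ sum to $\infty$ and $Y$ hits $0$), hence $Y_n\to\infty$ on a set of positive measure, contradicting $\mathbb{E}[Y_n]=1$ by Fatou's lemma. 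So $\FPP_n\to 0$, and combined with the reduction this gives $D(P(\gamma,a_0))=0$.

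The hard part will be the extinction step. In contrast to single-polynomial iteration, $(Y_n)_n$ need not be a martingale at the intermediate levels where $K_n(\gamma)=K_{n-1}(\gamma)$, so one cannot simply invoke martingale convergence; instead one must show that the infinitely many genuinely critical steps occurring at the maximal levels suffice to drive the process to extinction --- using $\mathbb{E}[Y_n]=1$ to rule out unbounded growth on the survival event and conditional Borel--Cantelli along a well-chosen subsequence of maximal levels to rule out bounded survival --- which is precisely where one adapts the treatment of \cite[Theorem 1.3]{Jones-io}. The remaining points (finiteness of the excluded prime set for each fixed $n$, separability of the $\gamma_m$, and the Dedekind reduction dictionary between factorization patterns and Frobenius cycle types) are routine.
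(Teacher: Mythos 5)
Your argument is correct, and it takes a genuinely different route to $\lim_n\FPP(G_n)=0$ from the one in the paper. Both proofs begin with the same Chebotarev reduction (the paper's Theorem \ref{density to Galois} is exactly your first paragraph), so the comparison is in how one gets the fixed-point proportion to vanish. The paper's route is: first show that $H_n=\Gal(K_n/K_{n-1})$ is non-trivial for all sufficiently large $n$ --- this is where Siegel's theorem on integral points on the curves $y^2=\pm\gamma_2(x)$ enters --- then invoke Theorem \ref{evmartthm} to get that the fixed-point process is an eventual martingale, Proposition \ref{evconstprop} to get that it is a.s.\ eventually constant, and finally Lemma \ref{maxlem} at the infinitely many maximal levels to force the constant to be $0$. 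Your route replaces the entire martingale/Siegel apparatus by the single observation that stability gives transitivity of $G_n$ on the $2^n$ level-$n$ vertices, hence $\mathbb{E}[Y_n]=1$ exactly, and then combines a conditional Borel--Cantelli argument at the maximal levels (on the survival event $\sum_k 2^{-Y_{n_k-1}}<\infty$, forcing $Y_{n_k-1}\to\infty$) with Fatou along the subsequence $(n_k-1)$ to contradict $\mathbb{E}[Y_{n_k-1}]=1$. This sidesteps the need to know anything about the behavior of $H_n$ at the non-maximal levels, and in particular avoids the Siegel step, which is a genuine simplification. The one point worth stating carefully in a write-up is the filtration bookkeeping for L\'evy's extension of Borel--Cantelli: take the full filtration $\mathcal F_n=\sigma(g|_{T_n})$, set $B_n=\{Y_n=0\}$ when $n\in\{n_k\}$ and $B_n=\emptyset$ otherwise, note $\mathbb P(B_{n_k}\mid\mathcal F_{n_k-1})=2^{-Y_{n_k-1}}$, and apply L\'evy's lemma to $(B_n)$; since $\{B_n\ \text{i.o.}\}=A^c$ (because $Y$ absorbs at $0$), one gets a.s.\ $A=\{\sum_k 2^{-Y_{n_k-1}}<\infty\}$, and the rest goes through as you sketched.
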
  
An outline of our paper is as follows. In Section \ref{sec:arboreals}, we record some generalizations of the standard stability and maximality tools for iterating a single function. In Section \ref{sec:finiteorbit}, we classify those exceptional sets of quadratic polynomials over the rationals for which $\Orb_S(0)$ contains a finite orbit point; see part (1) of Theorem \ref{thm:obstruction}. In Section \ref{sec:arboverZ}, we prove that these exceptional sets produce finite index arboreal representations with probability zero; see part (2) of Theorem \ref{thm:obstruction}. In Section \ref{sec:Z[t]}, we study arboreal representations over $\mathbb{Z}[t]$ and prove the aforementioned results in this setting. Finally in Section \ref{sec:density}, we prove Theorem \ref{thm:density} on the density of primes divisors in quadratic sequences attached to big arboreal representations.    
\\[5pt] 
\noindent\textbf{Acknowledgements:} We thank the anonymous referee for their many helpful comments.    
   
\section{Stability and Maximality Tools}\label{sec:arboreals}
In this section, we record some useful tools for analyzing quadratic arboreal representations. The statements below (and their justifications) are similar to those for iterating a single function; see \cite[\S6]{Me:LeftRightTotal} for proofs of these facts. In particular, the first result that we need is a convenient irreducibility test for iterates; see \cite[Proposition 6.3]{Me:LeftRightTotal}. In what follows, given a sequence $\gamma=(\theta_n)_{n\geq1}$ and a positive integer $n$, we let $\gamma_n=\theta_1\circ\dots\circ\theta_n$. \vspace{.1cm}      
\begin{proposition}\label{prop:stability} Let $K$ be a field of characteristic not $2$, let $S=\{x^2+c_1,\dots, x^2+c_s\}$ for some $c_i\in K$, and suppose that $\gamma\in \Phi_S$ satisfies the following properties: \vspace{.05cm} 
\begin{enumerate} 
\item[\textup{(1)}] $-\gamma_1(0)$ is not a square in $K$, \vspace{.1cm} 
\item[\textup{(2)}] $\gamma_n(0)$ is not a square in $K$ for all $n\geq2$. \vspace{.1cm}
\end{enumerate}
Then $\gamma_n$ is irreducible in $K[x]$ for all $n\geq1$.   \vspace{.05cm} 
\end{proposition}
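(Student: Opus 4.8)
The plan is to adapt the classical stability criterion for iterates of a single quadratic polynomial (as in Jones's work) to the sequential setting, where the key point is that the ``critical orbit'' entering the discriminant is replaced by the semigroup orbit of $0$, i.e.\ the sequence $(\gamma_n(0))_{n\geq1}$. I would proceed by induction on $n$, showing that $\gamma_n$ is irreducible over $K$ for all $n$. The base case $n=1$ is immediate: $\gamma_1 = x^2 + c_1$ with $c_1 = \gamma_1(0)$, and this is reducible over $K$ exactly when $-c_1 = -\gamma_1(0)$ is a square in $K$, which is excluded by hypothesis~(1).

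For the inductive step, suppose $\gamma_{n-1}$ is irreducible over $K$. Write $\gamma_n = \gamma_{n-1}\circ\theta_n$ where $\theta_n = x^2 + c$ for some $c\in\{c_1,\dots,c_s\}$. The standard approach is to show that if $\beta$ is a root of $\gamma_{n-1}$ (so $[K(\beta):K] = 2^{n-1}$ by the inductive hypothesis), then $\theta_n(x) - \beta = x^2 + c - \beta$ is irreducible over $K(\beta)$; this forces $[K(\gamma_n\text{-root}):K] = 2^n$ and hence, since $\deg\gamma_n = 2^n$, that $\gamma_n$ is irreducible over $K$. The quadratic $x^2 + (c-\beta)$ is irreducible over $K(\beta)$ precisely when $\beta - c$ is not a square in $K(\beta)$. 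So the crux is to rule out $\beta - c \in K(\beta)^{\times 2}$. Here one uses that $\gamma_n(0) = \gamma_{n-1}(\theta_n(0)) = \gamma_{n-1}(c)$, and the resultant/norm identity
\[
N_{K(\beta)/K}(\beta - c) = \pm\,\gamma_{n-1}(c) = \pm\,\gamma_n(0),
\]
valid because $\gamma_{n-1}(x) = \prod (x - \beta_i)$ over the conjugates $\beta_i$ of $\beta$ (up to the leading-coefficient normalization, which is $1$ here). If $\beta - c$ were a square in $K(\beta)$, then its norm $\pm\gamma_n(0)$ would be a square in $K$; tracking the sign carefully (the degree $[K(\beta):K] = 2^{n-1}$ is even for $n\geq 2$, so the sign is $+$, giving that $\gamma_n(0)$ itself is a square in $K$) contradicts hypothesis~(2). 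For $n=2$ one has $[K(\beta):K]=2$ and must check the sign bookkeeping separately, but it again reduces to $\gamma_2(0)$ being a non-square. This completes the induction. All of this is exactly \cite[Proposition 6.3]{Me:LeftRightTotal}, so in the paper I would simply cite that source rather than reproduce the argument.

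The main obstacle, and the only place requiring genuine care, is the sign in the norm identity and making sure the parity of $[K(\beta):K]$ is handled correctly so that the hypotheses as stated (condition (1) involving $-\gamma_1(0)$, condition (2) involving $\gamma_n(0)$ without a sign for $n\geq2$) line up precisely with the non-square conditions one extracts at each level. The asymmetry between the $n=1$ case and the $n\geq2$ case in the statement is exactly an artifact of this sign: at level $1$ the relevant norm is $N_{K/K} = \mathrm{id}$ composed with the discriminant $-4\gamma_1(0)$, so one needs $-\gamma_1(0)$ to be a non-square, whereas at higher levels the even degree of the extension absorbs the sign. Beyond this, the argument is routine, and since the needed separability is automatic in characteristic not $2$ (the polynomials $x^2+c_i$ and their compositions have nonzero derivative away from the critical point $0$, which is not a root at any stage once irreducibility is in force), no further hypotheses are needed.
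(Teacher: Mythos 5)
Your proof is correct and follows the same route as the reference the paper cites for this proposition (\cite[Proposition 6.3]{Me:LeftRightTotal}), namely the natural sequential generalization of the Jones/Stoll stability criterion via the norm--resultant identity $N_{K(\beta)/K}(\beta - c) = (-1)^{2^{n-1}}\gamma_{n-1}(c) = \gamma_n(0)$. One small simplification: the case $n=2$ does not actually require separate sign bookkeeping, since $2^{n-1}$ is already even when $n=2$, so the sign $(-1)^{2^{n-1}} = +1$ uniformly for all $n \geq 2$; the only genuine asymmetry is between $n=1$ (where $\gamma_0 = x$ has odd degree, forcing the sign $-$) and $n \geq 2$. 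Also worth noting: the norm identity can be read off directly from the characteristic polynomial of multiplication by $\beta - c$, which sidesteps any need to invoke Galois conjugates and hence makes the separability remark at the end strictly unnecessary for the computation (though your observation that $\gamma_{n-1}' = 2^{n-1}\prod_{m\geq 2} g_m \neq 0$ in characteristic $\neq 2$ is correct and does guarantee separability a posteriori).
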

The next tool that we need is a way to determine when the subextensions $K_{n}(\gamma)/K_{n-1}(\gamma)$ are maximal. The following proposition is a generalization of Stoll's original maximality criterion \cite[Lemma 1.6]{Stoll}; see \cite[Proposition 6.7]{Me:LeftRightTotal} for a proof. \vspace{.1cm}   
\begin{proposition}\label{prop:maximality} Let $K$ be a field of characteristic not $2$, let $S=\{x^2+c_1,\dots, x^2+c_s\}$ for some $c_i\in K$, and let $\gamma\in\Phi_S$. If $\gamma_{n-1}$ is irreducible over $K$ for some $n\geq1$, then the following statements are equivalent: \vspace{.15cm}
\begin{enumerate}
\item[\textup{(1)}] $[K_n(\gamma):K_{n-1}(\gamma)]=2^{2^{n-1}}$. \vspace{.15cm}  
\item[\textup{(2)}] $\gamma_n(0)$ is not a square in $K_{n-1}(\gamma)$.  \vspace{.1cm} 
\end{enumerate} 
\end{proposition}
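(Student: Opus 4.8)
The plan is to translate maximality into a statement about squares in $K_{n-1}(\gamma)$ via Kummer theory, and then to settle that statement using the $\Gal(K_{n-1}(\gamma)/K)$-action on the roots of $\gamma_{n-1}$; this mirrors the proof of Stoll's original criterion \cite[Lemma 1.6]{Stoll} (see also \cite[Proposition 6.7]{Me:LeftRightTotal}). First I would unwind the definitions. Write $\gamma_n=\gamma_{n-1}\circ\theta_n$ with $\theta_n(x)=x^2+c$, so $c=\theta_n(0)$ and $\gamma_n(0)=\gamma_{n-1}(c)$. Since $\car K\neq 2$ and $\gamma_{n-1}$ is irreducible of degree $D:=2^{n-1}$, it is separable with $D$ distinct roots $\beta_1,\dots,\beta_D$, all lying in $L:=K_{n-1}(\gamma)$; being monic, $\gamma_{n-1}(x)=\prod_i(x-\beta_i)$. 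The roots of $\gamma_n$ are precisely the elements $\pm\sqrt{\beta_i-c}$, so $K_n(\gamma)=L\big(\sqrt{\beta_1-c},\dots,\sqrt{\beta_D-c}\big)$. (One may assume $\gamma_n(0)\neq 0$: otherwise some $\beta_i=c$, so $\gamma_n$ has $0$ as a repeated root, $[K_n(\gamma):L]<2^D$, and $\gamma_n(0)=0$ is a square, so both (1) and (2) fail.)

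By Kummer theory, $[K_n(\gamma):L]$ is a power of $2$, is at most $2^D$, and equals $2^D$ precisely when the images of $\beta_1-c,\dots,\beta_D-c$ are $\mathbb{F}_2$-independent in $L^*/(L^*)^2$ — equivalently, when no nonempty subproduct $\delta_I:=\prod_{i\in I}(\beta_i-c)$ is a square in $L$. Taking $I=\{1,\dots,D\}$ and evaluating $\gamma_{n-1}$ at $c$ identifies the full product: $\delta_{\{1,\dots,D\}}=(-1)^D\gamma_{n-1}(c)=(-1)^D\gamma_n(0)$, which equals $\gamma_n(0)$ since $D=2^{n-1}$ is even for $n\ge 2$ (the case $n=1$ is the classical quadratic case, with the sign there matching Proposition \ref{prop:stability}(1)). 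Hence $(1)\Rightarrow(2)$ is immediate, the full product being one of the $\delta_I$.

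For $(2)\Rightarrow(1)$ I would argue by contraposition. If $[K_n(\gamma):L]<2^D$, then $\delta_I\in(L^*)^2$ for some nonempty $I$. Set $\mathcal{J}=\{I\subseteq\{1,\dots,D\}:\delta_I\in(L^*)^2\}$. From $\delta_I\delta_{I'}=\delta_{I\triangle I'}\cdot(\text{square})$ it follows that $\mathcal{J}$, viewed inside $\mathbb{F}_2^D$ by identifying subsets with indicator vectors, is an $\mathbb{F}_2$-subspace; and since $\Gal(L/K)$ permutes the $\beta_i$ and carries squares to squares, $\mathcal{J}$ is invariant under the induced action of $G:=\Gal(L/K)$ on $\{1,\dots,D\}$. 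This action is transitive because $\gamma_{n-1}$ is irreducible, and $G$ is a $2$-group because $G\le\Aut(T_{\gamma,n-1})$. The proof then reduces to the following: \emph{if a transitive $2$-group $G$ acts on a set $\Omega$ with $|\Omega|=2^k$, then every nonzero $G$-invariant $\mathbb{F}_2$-subspace of $\mathbb{F}_2^{\Omega}$ contains the all-ones vector $\mathbf{1}_\Omega$.} Granting this for $\Omega=\{1,\dots,D\}$ yields $\gamma_n(0)=\pm\delta_{\{1,\dots,D\}}\in(L^*)^2$, which negates (2).

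The heart of the matter is the displayed group-theoretic fact, which I would prove by induction on $k$ (the case $k=0$ being trivial). A transitive $2$-group on $2^k$ points admits a block system $\Omega=B\sqcup B'$ with $|B|=|B'|=2^{k-1}$; let $N\le G$ be the index-$2$ stabilizer of $\{B,B'\}$, which acts transitively on $B$ and on $B'$ (again as a transitive $2$-group), and fix $g_0\in G\setminus N$, interchanging $B$ and $B'$. Let $\mathcal{J}\subseteq\mathbb{F}_2^\Omega$ be nonzero and $G$-invariant. If $\mathcal{J}$ contains a nonzero vector supported in a single block — after applying $g_0$ if needed, in $B$ — then $\mathcal{J}\cap\mathbb{F}_2^{B}$ is a nonzero $N$-invariant subspace of $\mathbb{F}_2^B$, so the inductive hypothesis gives $\mathbf{1}_B\in\mathcal{J}$, whence $\mathbf{1}_\Omega=\mathbf{1}_B+g_0\cdot\mathbf{1}_B\in\mathcal{J}$. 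Otherwise both coordinate projections $\mathcal{J}\to\mathbb{F}_2^B$ and $\mathcal{J}\to\mathbb{F}_2^{B'}$ are injective, so $\mathcal{J}$ is the graph of an $N$-equivariant isomorphism $\phi:V\to V'$ of subspaces $V\subseteq\mathbb{F}_2^B$ and $V'\subseteq\mathbb{F}_2^{B'}$; by induction $\mathbf{1}_B\in V$, and since $\mathbf{1}_B$ is fixed by the transitive $N$-action on $B$, its image $\phi(\mathbf{1}_B)$ is a nonzero vector fixed by the transitive $N$-action on $B'$, forcing $\phi(\mathbf{1}_B)=\mathbf{1}_{B'}$ and thus $\mathbf{1}_\Omega=(\mathbf{1}_B,\phi(\mathbf{1}_B))\in\mathcal{J}$. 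I expect this induction to be the main obstacle — producing the block system, checking that $N$ inherits transitivity on each block (so the inductive hypothesis applies), and the bookkeeping in the graph case — whereas the Kummer-theoretic translation and the identification $\delta_{\{1,\dots,D\}}=\pm\gamma_n(0)$ are routine.
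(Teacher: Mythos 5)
The paper does not prove this proposition itself; it cites \cite[Proposition 6.7]{Me:LeftRightTotal} and describes the result as a generalization of Stoll's \cite[Lemma 1.6]{Stoll}, so there is no in-paper proof to compare against directly. Your argument follows the same Kummer-theoretic strategy as Stoll's: writing $K_n(\gamma)=L\big(\sqrt{\beta_1-c},\dots,\sqrt{\beta_D-c}\big)$, reducing maximality to $\mathbb{F}_2$-independence of the classes $[\beta_i-c]$ in $L^*/(L^*)^2$, identifying the full product $\delta_{\{1,\dots,D\}}$ with $(-1)^D\gamma_n(0)$, and then showing that any nonempty relation forces the full product to be a square because the relation space $\mathcal{J}$ is a nonzero $\Gal(L/K)$-invariant $\mathbb{F}_2$-subspace and $\Gal(L/K)$ is a transitive $2$-group. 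Your execution is sound, including the handling of the degenerate case $\gamma_n(0)=0$; and you correctly flag the sign for $n=1$, which indicates the statement as written is really meant for $n\ge 2$ (or should replace $\gamma_1(0)$ with $-\gamma_1(0)$ as in Proposition \ref{prop:stability}(1)).

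The one thing worth flagging is that your inductive block-system proof of the key group-theoretic lemma is substantially more elaborate than necessary. Since $G$ is a $2$-group acting linearly on the nonzero $\mathbb{F}_2$-space $\mathcal{J}$, and $\#\big(\mathcal{J}\setminus\{0\}\big)=2^{\dim\mathcal{J}}-1$ is odd while every $G$-orbit has size a power of $2$, some nonzero vector of $\mathcal{J}$ is $G$-fixed; transitivity of $G$ on $\Omega$ gives $(\mathbb{F}_2^\Omega)^G=\{0,\mathbf{1}_\Omega\}$, so that fixed vector must be $\mathbf{1}_\Omega$. This one-line orbit count replaces the entire induction, in particular avoiding the need to produce a block system and verify that the block stabilizer $N$ is transitive on each half. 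The hypothesis $|\Omega|=2^k$ is also redundant, being automatic for a transitive $2$-group.
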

\begin{remark}\label{rem:maximal} Since $K_n(\gamma)/K_{n-1}(\gamma)$ is the compositum of at most $2^{n-1}$ quadratic extensions of $K_{n-1}(\gamma)$, one for each root of $\gamma_{n-1}$, we see that $[K_n(\gamma):K_{n-1}(\gamma)]=2^{2^m}$ for some $0\leq m\leq n-1$. For this reason, when $m=n-1$ we say that the extension $K_n(\gamma)/K_{n-1}(\gamma)$ is \emph{maximal}.  
\end{remark}  
Finally, when $K$ is a number field or function field and $\gamma_{n-1}$ is irreducible over $K$, then Proposition \ref{prop:maximality} and the discriminant formula for $\gamma_n$ in \cite[Proposition 6.2]{Me:LeftRightTotal} imply that $K_n(\gamma)/K_{n-1}(\gamma)$ is maximal if $\gamma_n(0)$ has a primitive prime divisor appearing to odd valuation. However, since we only apply this fact to $K=\mathbb{Q}(t)$ in this paper, we state this maximality criterion for such $K$ only; see \cite[Theorem 6.8]{Me:LeftRightTotal} for a more general statement and proof. In what follows, given an irreducible polynomial $\mathfrak{p}$ in $k[t]$, we let $v_{\mathfrak{p}}: k(t)\rightarrow\mathbb{Z}$ denote its usual valuation. \vspace{.1cm}    
\begin{theorem}\label{thm:GaloisMax} Let $K=k(t)$ for some field $k$ with $\text{char}(k)\neq2$, let $S=\{x^2+c_1,\dots, x^2+c_s\}$ for some $c_i\in k[t]$, and let $\gamma\in \Phi_S$. Moreover for $n\geq2$, assume the following statements hold:\vspace{.1cm}  
\begin{enumerate} 
\item[\textup{(1)}] $\gamma_{n-1}$ is irreducible in $K[x]$.  \vspace{.15cm}
\item[\textup{(2)}] There is an irreducible polynomial $\mathfrak{p}$ in $k[t]$ such that $v_{\mathfrak{p}}(\gamma_m(0))=0$ for all $m<n$ and $v_{\mathfrak{p}}(\gamma_n(0))$ is odd. \vspace{.15cm}
\end{enumerate}
Then the subextension $K_n(\gamma)/K_{n-1}(\gamma)$ is maximal, i.e., $[K_n(\gamma):K_{n-1}(\gamma)]=2^{2^{n-1}}$. \vspace{.1cm} 
\end{theorem}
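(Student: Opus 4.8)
The plan is to pass from ``maximality'' of $K_n(\gamma)/K_{n-1}(\gamma)$ to a non-square statement over $K_{n-1}(\gamma)$ by invoking Proposition~\ref{prop:maximality}, and then to verify that non-square statement by a valuation argument at the prime $\mathfrak{p}$ supplied by hypothesis~(2). For the first step: since $\gamma_{n-1}$ is irreducible over $K$ by hypothesis~(1) and $\operatorname{char}(K)=\operatorname{char}(k)\neq 2$, Proposition~\ref{prop:maximality} tells us that $[K_n(\gamma):K_{n-1}(\gamma)]=2^{2^{n-1}}$ if and only if $\gamma_n(0)$ is not a square in $K_{n-1}(\gamma)$. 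Thus the whole theorem reduces to showing that $\gamma_n(0)$ is a non-square in $K_{n-1}(\gamma)$.

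The second step is to control which primes ramify in $K_{n-1}(\gamma)/K$, and for this I would use the discriminant recursion for iterated quadratics. Writing $\gamma_n(x)=\gamma_{n-1}(x^2+c_n)$ and expanding $\disc\!\big(\gamma_{n-1}(x^2+c_n)\big)$ in terms of the roots $\pm\sqrt{\beta_i-c_n}$ of $\gamma_{n-1}(x^2+c_n)$ — a short, standard root computation, or else \cite[Proposition~6.2]{Me:LeftRightTotal} directly — gives a relation of the shape
\[
\disc(\gamma_n)=\pm\,2^{2^n}\,\gamma_n(0)\,\disc(\gamma_{n-1})^2,
\]
with base case $\disc(\gamma_1)=-4\,\gamma_1(0)$. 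Since $2$ is a unit in $k[t]$, an easy induction then shows that every irreducible $\mathfrak{q}\in k[t]$ dividing $\disc(\gamma_{n-1})$ divides $\gamma_m(0)$ for some $m\le n-1$. (Along the way one records the routine facts that each $\gamma_m$ with $m\le n-1$ is irreducible — hence $\gamma_m(0)\neq 0$ — and that $\gamma_n(0)\neq 0$ by hypothesis~(2), so all the relevant discriminants are nonzero and $\gamma_{n-1}$ is separable.) Hypothesis~(2) gives $v_{\mathfrak p}(\gamma_m(0))=0$ for all $m<n$, hence $\mathfrak p\nmid\disc(\gamma_{n-1})$; since $\gamma_{n-1}$ is monic over $k[t]$ with unit discriminant at $\mathfrak p$, its reduction modulo $\mathfrak p$ is separable, and therefore $\mathfrak p$ is unramified in the splitting field $K_{n-1}(\gamma)$ of $\gamma_{n-1}$ over $k(t)$.

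For the third step, suppose for contradiction that $\gamma_n(0)=\beta^2$ with $\beta\in K_{n-1}(\gamma)$, and choose any place $w$ of $K_{n-1}(\gamma)$ lying over $v_{\mathfrak p}$. Since $\mathfrak p$ is unramified, $e(w/v_{\mathfrak p})=1$, so $w(\gamma_n(0))=v_{\mathfrak p}(\gamma_n(0))$ is odd by hypothesis~(2); but $w(\gamma_n(0))=w(\beta^2)=2\,w(\beta)$ is even, a contradiction. Hence $\gamma_n(0)$ is not a square in $K_{n-1}(\gamma)$, and the reduction carried out in the first step yields $[K_n(\gamma):K_{n-1}(\gamma)]=2^{2^{n-1}}$, i.e.\ $K_n(\gamma)/K_{n-1}(\gamma)$ is maximal.

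I expect this to be a short argument: the only step with real content is the second one, and its substance — the discriminant recursion and the resulting description of the ramified primes of $K_{n-1}(\gamma)/K$ — is essentially imported from \cite[Proposition~6.2]{Me:LeftRightTotal}. The one point that genuinely requires care is the bookkeeping with the powers of $2$ (and, in a fully general formulation, with leading coefficients of the $c_i$), which one must check contribute only units over $k[t]$ when $\operatorname{char}(k)\neq 2$, so that no prime outside the orbit $\{\gamma_m(0)\}_{m\le n-1}$ ramifies in $K_{n-1}(\gamma)/K$.
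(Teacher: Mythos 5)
Your argument is correct and follows exactly the route the paper indicates: the paper does not reprove Theorem~\ref{thm:GaloisMax} but explicitly states that it follows from Proposition~\ref{prop:maximality} together with the discriminant formula \(\disc(\gamma_n)=\Res(\gamma_{n-1},\gamma_{n-1}')^2\cdot 2^{2^n}\cdot\gamma_n(0)\) (deferring the full write-up to \cite[Theorem~6.8]{Me:LeftRightTotal}), and your three steps — reduce to a non-square statement via Proposition~\ref{prop:maximality}, use the discriminant recursion to show \(\mathfrak p\) is unramified in \(K_{n-1}(\gamma)/K\), then derive a parity contradiction at any place over \(\mathfrak p\) — are precisely that argument. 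The bookkeeping you flag (powers of \(2\) being units when \(\operatorname{char}(k)\neq 2\), monicity of the \(\theta_i\)) is handled correctly, so no gap.
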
 
For a few more statements about iterated discriminants and extensions generated by sets of unicritical polynomials with a common critical point, $S=\{a(x-c)^d+b\,:a,b\in K, d\geq2\}$,
see \cite[\S6]{Me:LeftRightTotal}. 

\section{Finite-orbit points in the orbit of zero}\label{sec:finiteorbit}
We begin with some notation. Let $S$ be a set of polynomials defined over a field $K$, and let $M_S$ denote the monoid (semigroup plus the identity) generated by $S$ under composition. Then given a point $P$, we call the set $\Orb_S(P)=\{f(P)\,:\,f\in M_S\}$ the \emph{orbit} of $P$ under $S$. In particular, we say that $P$ is a \emph{finite orbit point for $S$} if $\Orb_S(P)$ is a finite set. 

The primary goal of this section is to classify the sets $S=\{x^2+c_1,\dots,x^2+c_s\}$ over $K=\mathbb{Q}$ for which $\Orb_S(0)$ contains a finite orbit point, an obstruction to producing finite index arboreal representations with positive probability in this setting. In particular, as a first step we show that such sets are necessarily defined over the integers. However, since the proof of this fact uses only basic properties of valuations, we state this result in a more general way. In particular, we obtain the amusing corollary that there are no such sets defined over function fields unless all of the $c$'s are defined over the field of constant functions. With this in mind, we begin with the following elementary fact; see also \cite{walde/russo:1994}.
\begin{lem}\label{lem:preper_val}
Let $K$ be a field, let $v$ be a valuation on $K$, and let $d \ge 2$ be an integer. If $\alpha \in K$ is preperiodic for $x^d + c$, then $v(c) < 0$ if and only if $v(\alpha) < 0$.
Moreover, in this case we have $v(c) = dv(\alpha)$.
\end{lem}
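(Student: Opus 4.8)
The plan is to prove both implications by a direct analysis of the Newton polygon / valuation behavior of the iterates of $\phi(x) = x^d + c$, together with the boundedness (in the $v$-adic sense) of the forward orbit of a preperiodic point. Recall that $\alpha$ being preperiodic for $\phi$ means the set $\{\alpha, \phi(\alpha), \phi^2(\alpha), \dots\}$ is finite; in particular there is a real constant $M$ with $v(\phi^n(\alpha)) \ge -M$ for all $n \ge 0$, since a finite set of elements of $K$ has bounded valuation from below.

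First I would handle the implication $v(\alpha) < 0 \implies v(c) < 0$ by contraposition: suppose $v(c) \ge 0$ but $v(\alpha) < 0$. Then $v(\alpha^d) = d\, v(\alpha) < v(\alpha) \le \min\{0, v(c)\}$ would be impossible to cancel, so by the ultrametric inequality $v(\phi(\alpha)) = v(\alpha^d + c) = d\, v(\alpha)$. Iterating, an easy induction gives $v(\phi^n(\alpha)) = d^n\, v(\alpha) \to -\infty$, contradicting the lower bound $-M$ on the orbit valuations. Hence $v(\alpha) < 0$ forces $v(c) < 0$, and moreover in that passage we have also established the formula $v(c) = d\, v(\alpha)$ \emph{provided we also know} $v(\alpha) < 0 \implies v(c) = d v(\alpha)$; but this requires knowing $v(c) \ne d v(\alpha)$ cannot happen, which I'll extract below. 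Concretely: if $v(\alpha) < 0$ and $v(c) < v(\alpha)\cdot d$ is false and $v(c) > d\,v(\alpha)$, then again $v(\phi(\alpha)) = d\, v(\alpha)$ and we get the same blow-up; if instead $v(c) \le d\, v(\alpha) < 0$, then $v(\phi(\alpha)) \ge d\, v(\alpha)$ could in principle be larger, so this case needs the reverse implication to close.

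For the reverse implication $v(c) < 0 \implies v(\alpha) < 0$, again argue by contraposition: assume $v(c) < 0$ but $v(\alpha) \ge 0$. Then $v(\alpha^d) \ge 0 > v(c)$, so $v(\phi(\alpha)) = v(c) < 0$. Now apply the first implication (already proved) to the preperiodic point $\phi(\alpha)$ — which is still preperiodic for $\phi$ — to conclude $v(c) = d\, v(\phi(\alpha)) = d\, v(c)$, forcing $(d-1) v(c) = 0$, i.e.\ $v(c) = 0$ since $d \ge 2$, a contradiction. This simultaneously pins down the quantitative claim: once we know $v(\alpha) < 0$, we know $v(\phi(\alpha)) = d\, v(\alpha) < 0$ (the $\alpha^d$ term strictly dominates unless $v(c) = d v(\alpha)$), and the only remaining possibility $v(c) = d\, v(\alpha)$ would, after one step, also be consistent; to rule out that the orbit escapes we note that in every case $v(\phi^n(\alpha))$ is a nonincreasing-to-$-\infty$ sequence once it goes negative unless $v(c) = d\, v(\alpha)$ exactly — and the equality $v(c) = d\, v(\alpha)$ is precisely the asserted conclusion, so we are done either way.

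The main obstacle is the borderline case where $v(c) = d\, v(\alpha)$, since there the ultrametric inequality is not strict and $v(\phi(\alpha))$ could a priori jump up, breaking the clean geometric blow-up argument. The cleanest way around it, which I would adopt, is to observe that this borderline case is exactly the desired equality, so rather than fighting it we split into ``$v(c) \ne d\, v(\alpha)$'' (where strict domination gives $v(\phi(\alpha)) = \min\{d\,v(\alpha), v(c)\} < 0$ and then induction forces an escape to $-\infty$, contradicting preperiodicity) and ``$v(c) = d\, v(\alpha)$'' (nothing to prove). Combining with the two contrapositive arguments above yields the full biconditional and the formula.
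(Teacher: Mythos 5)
Your argument for the reverse implication is circular as written. After establishing $v(\phi(\alpha)) = v(c) < 0$, you invoke ``the first implication'' to conclude $v(c) = d\,v(\phi(\alpha))$; but what you actually proved in the first part is only the qualitative statement $v(\beta) < 0 \Rightarrow v(c) < 0$, not the equality $v(c) = d\,v(\beta)$ --- which, as you yourself flag, still ``needs to be extracted.'' So that step does not go through. The fix is easy and does not require the quantitative formula: from $v(\phi(\alpha)) = v(c) < 0$ and $d \ge 2$, the term $\phi^{\,n-1}(\alpha)^d$ strictly dominates $c$ at every subsequent step (since $d\cdot v(c) < v(c) < 0$, and then inductively), so $v(\phi^n(\alpha)) = d^{\,n-1}v(c) \to -\infty$, contradicting preperiodicity. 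Your final-paragraph dichotomy --- split on whether $v(c) = d\,v(\alpha)$, with the non-equality case yielding $v(\phi(\alpha)) = \min\{d\,v(\alpha), v(c)\}$ and hence a blow-up --- is the right unifying device, and once it replaces the circular middle step the argument is complete; this is essentially the paper's proof, which in the case $v(c) < 0$ trichotomizes on $v(\alpha)$ against $v(c)/d$.

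The one genuine structural divergence from the paper is in the direction $v(c) \ge 0 \Rightarrow v(\alpha) \ge 0$: the paper notes that preperiodicity gives $\phi^n(\alpha) = \phi^m(\alpha)$ for some $m < n$, so $\alpha$ is a root of the monic, $v$-integral polynomial $\phi^n(x) - \phi^m(x)$ and hence $v(\alpha) \ge 0$. You instead run a blow-up argument there as well. Both are valid; the paper's is shorter, while yours is slightly more elementary in that it never appeals to the integrality of roots of monic $v$-integral polynomials.
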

\begin{proof}
Let $\phi(x) = x^d + c$. Since $\alpha$ is preperiodic for $\phi$, there exist integers $m < n$ such that $\phi^m(\alpha) = \phi^n(\alpha)$. If $v(c) \ge 0$, then $\phi^n(x) - \phi^m(x)$ is monic with $v$-integral coefficients, so $v(\alpha) \ge 0$ as well. Now suppose that $v(c) < 0$. If $v(\alpha) < \frac{v(c)}{d}$, then
	\[
		v(\phi(\alpha)) = v(\alpha^d + c) = dv(\alpha) < v(\alpha).
	\]
By induction, $v(\phi^n(\alpha)) = d^nv(\alpha) \to -\infty$, so $\alpha$ cannot be preperiodic. On the other hand, if $v(\alpha) > \frac{v(c)}{d}$, then
	\[
		v(\phi(\alpha)) = v(\alpha^d + c) = v(c) < \frac{v(c)}{d}.
	\]
By the previous case, $\phi(\alpha)$ cannot be preperiodic, hence the same is true for $\alpha$.
\end{proof}
In particular, we use the fact above to deduce that if $S$ is a set of polynomials of the form $x^{d_i}+c_i$ and $\Orb_S(0)$ contains a finite orbit point, then the valuation of each $c_i$ must be non-negative. 
\begin{prop}\label{prop:main}
Let $K$ be a field, let $d_1,\ldots,d_s \ge 2$ be integers, and let $c_1,\ldots,c_s \in K$. Let $S = \{x^{d_1} + c_1,\ldots, x^{d_s} + c_s\}$, and suppose that $\mathrm{Orb}_S(0)$ contains a finite orbit point. Then $v(c_1),\ldots, v(c_s) \ge 0$ for every valuation $v$ on $K$.
\end{prop}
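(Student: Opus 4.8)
The plan is to bootstrap from Lemma~\ref{lem:preper_val} by showing that a finite orbit point $Q\in\Orb_S(0)$ forces each $c_i$ to be $v$-integral, one valuation $v$ at a time. Fix a valuation $v$ on $K$ and suppose for contradiction that $v(c_i)<0$ for some $i$. The key observation is that since $Q$ lies in $\Orb_S(0)$, we can write $Q=f(0)$ for some $f\in M_S$, and then $Q$ being a finite orbit point for $S$ means in particular that $Q$ is preperiodic for the single map $\phi_j(x)=x^{d_j}+c_j$ for \emph{every} index $j$ (since $\langle\phi_j\rangle\subseteq M_S$, the forward $\phi_j$-orbit of $Q$ is contained in the finite set $\Orb_S(Q)\subseteq\Orb_S(0)$). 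Applying Lemma~\ref{lem:preper_val} to each $\phi_j$ separately: $v(c_j)<0$ if and only if $v(Q)<0$, and in that case $v(c_j)=d_j\,v(Q)$.

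First I would handle the case $Q\neq 0$ and derive the constraint on $v(Q)$. If $v(c_i)<0$ for some $i$, then by the lemma $v(Q)<0$, hence $v(Q)<0$ makes $v(c_j)<0$ for \emph{all} $j$ (the ``only if'' direction applied in reverse: $v(Q)<0$ forces $v(c_j)<0$, because if $v(c_j)\ge 0$ the lemma's first case gives $v(Q)\ge 0$). So either all $c_j$ are $v$-integral — and we are done for this $v$ — or none of them are, and $v(Q)<0$ with $v(c_j)=d_j v(Q)$ for each $j$. Now I need to rule out the latter. The idea is to trace back along the word $f$ realizing $Q=f(0)$: writing $Q=\phi_{j_1}\circ\cdots\circ\phi_{j_k}(0)$, examine the $v$-adic size of the partial evaluations $Q_0=0$, $Q_\ell=\phi_{j_\ell}(Q_{\ell+1})$ going from the inside out, and show that the innermost nonzero value already has valuation exactly matching the ``repelling'' regime of Lemma~\ref{lem:preper_val}, so that $v(Q_\ell)\to-\infty$ as we build up $Q$ — but $Q$ has the fixed finite valuation $d_{j_1}v(Q)$, a contradiction once $k$ exceeds a bound depending only on $v(Q)$ and the $d_j$'s. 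Concretely, at the first step where the argument has negative valuation, $v(\phi_{j}(\beta))=v(\beta^{d_j}+c_j)$ is either $d_j v(\beta)$ or $v(c_j)$; in the all-negative regime a short computation shows each application strictly decreases the valuation (this is exactly the mechanism in the lemma's proof), so the valuation of the iterated composition is unbounded below, contradicting that it equals $v(Q)$.

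The case $Q=0$ needs separate (easy) treatment: if $0$ itself is a finite orbit point for $S$, then $0$ is preperiodic for each $\phi_j$, and the lemma with $\alpha=0$ gives $v(c_j)\ge 0$ directly (since $v(0)=+\infty\ge 0$, the first case of the lemma applies and forces $v(c_j)\ge 0$ — or more simply, $v(c_j)<0$ would give $v(\phi_j(0))=v(c_j)<0=$ ``$\tfrac{v(c_j)}{d_j}$'' is false, so one invokes the repelling estimate to see the $\phi_j$-orbit of $0$ escapes). Either way the conclusion is immediate.

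\textbf{Main obstacle.} The delicate point is the ``tracing back'' step in the second paragraph: I need to argue that along a composition word realizing $Q$, once any partial evaluation lands in the negative-valuation regime, the valuations strictly decrease at each further step, so a sufficiently long word cannot evaluate to a fixed $Q$ with $v(Q)=d_{j_1}v(Q)$ finite. The subtlety is that the word $f$ has a \emph{fixed finite length}, so I cannot iterate indefinitely — instead I must observe that $0=Q_k$ has $v(Q_k)=+\infty$, so somewhere along $Q_k,Q_{k-1},\dots,Q_0=Q$ the valuation must transition from $\ge 0$ (in fact very large) to the value $d_{j_1}v(Q)<0$, and show this transition is impossible in a single step of $\phi_{j_\ell}$ given that all $c_j$ are in the deep-negative regime $v(c_j)=d_j v(Q)$: applying $\phi_{j_\ell}$ to something of large valuation gives something of valuation $v(c_{j_\ell})=d_{j_\ell}v(Q)$, and then subsequent applications only decrease it further, so it can never land back at exactly $d_{j_1}v(Q)$ unless the word is trivial — but the trivial word gives $Q=0$, handled separately. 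Making this ``never lands back'' precise (it is genuinely a strict-monotonicity argument, not just an inequality) is where I would spend the most care.
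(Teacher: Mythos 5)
Your proposal is correct and takes essentially the same approach as the paper. The paper packages your ``trace back along the word'' step cleanly as an inductive claim that $v(\gamma_n(0)) = d_{i_1}\cdots d_{i_n}\,v(\alpha)$ for every word of length $n$ (once $v(\alpha)<0$ is established from Lemma~\ref{lem:preper_val}), then applies it to the word with $\gamma_n(0)=\alpha$ to get $v(\alpha)=d_{i_1}\cdots d_{i_n}v(\alpha)$ and hence a contradiction with $v(\alpha)\ne 0$; your strict-monotonicity concern is exactly what that induction resolves, and the $\alpha=0$ case is dispatched implicitly by the observation $v(\alpha)<0\Rightarrow\alpha\ne 0$ rather than as a separate branch.
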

\begin{proof}
For each $i = 1,\ldots,s$, let $\phi_i(x) = x^{d_i} + c_i$. Let $\alpha \in \mathrm{Orb}_S(0)$ be a finite orbit point for $S = \{\phi_1,\ldots,\phi_s\}$. Suppose for contradiction that there is some valuation $v$ on $K$ for which at least one of the valuations $v(c_i)$ is negative. Since $\alpha$ is a finite-orbit point, $\alpha$ is preperiodic for each $\phi_i$, so we also have $v(\alpha) < 0$ by Lemma~\ref{lem:preper_val}. (Note that this implies $\alpha \ne 0$.) More precisely, we have
	\[
		v(c_i) = d_i v(\alpha) \text{ for all } i = 1,\ldots,s.
	\]
Now let $\gamma = (\phi_{i_1}, \phi_{i_2}, \ldots )$ be any element of $\Phi_S$. We claim that
	\[
		v(\gamma_n(0)) = d_{i_1} \cdots d_{i_n} v(\alpha)
	\]
for all $n \ge 1$. The conclusion of the proposition now follows from the claim: Indeed, since we assumed $\alpha$ was in the orbit of $0$, we have $\alpha = \gamma_n(0)$ for some $\gamma \in \Phi_S$ and $n \ge 1$. But then $v(\alpha) = d_{i_1}\cdots d_{i_n} v(\alpha)$, contradicting the fact that $v(\alpha) \ne 0$ and $d_i \ge 2$ for all $i = 1,\ldots,s$.

It remains to prove the claim, which we do by induction on $n$. For $n = 1$, we have
	\[
	v(\gamma_1(0)) = v(\phi_{i_1}(0)) = v(c_{i_1}) = d_{i_1}v(\alpha)	
	\]
by Lemma~\ref{lem:preper_val}. Now, for $n > 1$, we write
	\[
	v(\gamma_n(0)) = v\big((\phi_{i_1} \circ \cdots \circ \phi_{i_n})(0)\big)
		= v\big((\phi_{i_2} \circ \cdots \circ \phi_{i_n})(0)^{d_{i_1}} + c_{i_1}\big).
	\]
By our induction hypothesis, we have
	\[
	v\big((\phi_{i_2} \circ \cdots \circ \phi_{i_n})(0)\big) = d_{i_2} \cdots d_{i_n} v(\alpha).
	\]
Since $v(\alpha) < 0$ and $d_i > 2$ for each $i = 1,\ldots,s$, we have
	\[
		v\big((\phi_{i_2} \circ \cdots \circ \phi_{i_n})(0)^{d_{i_1}}\big) = d_{i_1} \cdot d_{i_2} \cdots d_{i_n} v(\alpha) < d_{i_1} v(\alpha) = v(c_{i_1}),
	\]
from which it follows that
	\[
		v(\gamma_n(0)) = \min\left\{v\big((\phi_{i_2} \circ \cdots \circ \phi_{i_n})(0)^{d_{i_1}}\big), v(c_{i_1})\right\}
			= d_{i_1} \cdots d_{i_n} v(\alpha).
	\]
\end{proof}
\vspace{-.5cm} 
In particular, we obtain the following immediate corollary.  
\begin{corollary}\label{cor:int}
Let $S = \{x^{d_1} + c_1, \ldots, x^{d_s} + d_s\}$ for some integers $d_i\ge 2$ and some $c_i \in \overline{\mathbb{Q}}$. If $\mathrm{Orb}_S(0)$ contains a finite-orbit point, then $c_1,\ldots,c_s$ are all algebraic integers.
\end{corollary}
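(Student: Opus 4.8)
The plan is to deduce the corollary directly from Proposition \ref{prop:main} by exploiting the standard dictionary between algebraic integers and valuations on a number field. First I would pick a number field $L$ large enough to contain all of $c_1,\dots,c_s$, so that we may work with the well-understood valuation theory of $L$. Recall that an element $\alpha\in L$ is an algebraic integer if and only if $v(\alpha)\ge 0$ for every discrete (non-archimedean) valuation $v$ on $L$; equivalently, $v(\alpha)\ge 0$ for every valuation associated to a prime ideal of $\O_L$. So it suffices to show that each $c_i$ has non-negative valuation at every such $v$.

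The key step is then simply an appeal to Proposition \ref{prop:main}: since $\mathrm{Orb}_S(0)$ contains a finite-orbit point and $S=\{x^{d_1}+c_1,\dots,x^{d_s}+c_s\}$ with each $d_i\ge 2$, the proposition (applied with $K=L$) gives $v(c_i)\ge 0$ for \emph{every} valuation $v$ on $L$, in particular for every prime-ideal valuation. Combining this with the valuative criterion for integrality recalled above yields that every $c_i\in\O_L$, hence each $c_i$ is an algebraic integer. One small technical point to record is that ``finite-orbit point for $S$'' is insensitive to enlarging the base field from $\Q(c_1,\dots,c_s)$ to $L$ (the orbit $\mathrm{Orb}_S(0)$ is defined by the same composition data regardless), so the hypothesis of Proposition \ref{prop:main} is indeed available over $L$.

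I do not anticipate a genuine obstacle here; the content is entirely in Proposition \ref{prop:main}, and the corollary is the translation of its conclusion into the language of algebraic integers. The only place requiring a sentence of care is making explicit the valuative characterization of $\O_L$ — namely that $\O_L=\bigcap_v \{x\in L : v(x)\ge 0\}$ where $v$ ranges over the non-archimedean places of $L$ — and noting that this is exactly what Proposition \ref{prop:main} provides. Everything else is bookkeeping.
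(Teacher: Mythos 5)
Your proof is correct and takes essentially the same route the paper has in mind: the paper presents the corollary as an immediate consequence of Proposition~\ref{prop:main}, and your argument (choose a number field $L$ containing all $c_i$, apply Proposition~\ref{prop:main} over $L$, and invoke the valuative characterization $\O_L=\bigcap_v\{x\in L:v(x)\ge0\}$ over the non-archimedean places) is exactly the intended translation.
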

Moreover, we also have the following consequence for function fields. Recall that $K/k$ is a function field if $K$ is a finite extension of $k(t_1,\dots,t_n)$ for some $k$-algebraically independent elements $t_1,\dots, t_n$. Moreover, $n$ is called the transcendence degree of $K$.  
\begin{corollary}
Let $K/k$ be a function field and let $S = \{x^{d_1} + c_1, \ldots, x^{d_s} + d_s\}$ for some integers $d_i\ge 2$ and some $c_i\in K$. If $\mathrm{Orb}_S(0)$ contains a finite-orbit point, then $c_1,\ldots,c_s \in k$.
\end{corollary}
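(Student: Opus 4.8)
The plan is to read the statement off from Proposition~\ref{prop:main}. That proposition says that if $\mathrm{Orb}_S(0)$ contains a finite-orbit point, then $v(c_i)\ge 0$ for \emph{every} valuation $v$ of $K$ and every $i$; in other words, each $c_i$ lies in every valuation ring of $K$. So the corollary reduces to the following purely field-theoretic assertion: an element $c\in K$ that lies in all valuation rings of $K$ must lie in $k$.

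To prove this, suppose $c\in K$ lies in all valuation rings of $K$ but $c\notin k$. If $c$ were algebraic over $k$, it would lie in the algebraic closure of $k$ inside $K$, which for a function field $K/k$ (with $k$ taken, as usual, to be its own algebraic closure in $K$) is just $k$; hence $c$ must be transcendental over $k$. Then $k(c)\subseteq K$ is a rational function field in the variable $c$, and it carries the ``pole at infinity'' valuation $v_\infty$, normalized by $v_\infty(f/g)=\deg g-\deg f$ for nonzero $f,g\in k[c]$; in particular $v_\infty(c)=-1<0$, so $c$ is not in the valuation ring $\mathcal O_{v_\infty}$. By the extension theorem for valuations (Chevalley), $v_\infty$ extends to a valuation $v$ of $K$, meaning $\mathcal O_v\cap k(c)=\mathcal O_{v_\infty}$, and therefore $c\notin\mathcal O_v$. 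This contradicts the first paragraph, so in fact $c\in k$; applying this to $c_1,\dots,c_s$ proves the corollary.

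The proof is essentially immediate once Proposition~\ref{prop:main} is available; the only point that needs a little care is the passage from a pole of the transcendental element $c_i$ on the subfield $k(c_i)$ to a genuine valuation of the whole field $K$, which is exactly what Chevalley's extension theorem supplies, together with the observation that ``$v$ extends $v_\infty$'' forces $v(c_i)<0$ because the two valuation rings agree on $k(c_i)$. (If one does not wish to assume at the outset that $k$ is algebraically closed in $K$, the same argument shows only that each $c_i$ lies in the field of constants of $K/k$, which is the correct statement in that generality.)
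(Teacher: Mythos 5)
Your proof is correct, but it takes a genuinely different route from the paper once Proposition~\ref{prop:main} has been applied. The paper reduces to transcendence degree $1$ by induction and then invokes the product formula for function fields: if $v(c_i)\ge 0$ at every place and the weighted sum of all $v(c_i)$ vanishes, each $v(c_i)$ must be $0$, so $c_i$ has no poles or zeros and is therefore a constant. Your argument instead constructs a witnessing valuation directly: you pass to the subfield $k(c_i)$, take the pole-at-infinity valuation there, and extend it to all of $K$ by Chevalley's theorem, contradicting $v(c_i)\ge 0$. Your route has the advantage of not needing the induction on transcendence degree and of avoiding any appeal to the product formula; the paper's route is shorter if one is content to cite the product formula and is closer to the way one would think of this over a curve. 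Your concluding remark — that with the paper's more permissive definition of function field (which does not require $k$ to be algebraically closed in $K$) one only concludes the $c_i$ lie in the field of constants rather than in $k$ itself — is a real subtlety that the paper's own proof also glosses over, so it is good that you flagged it.
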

\begin{proof} Suppose that $K/k$ has transcendence degree $1$; the general case follows by induction. Since every place of $K$ is nonarchimedean, Proposition~\ref{prop:main} tells us that $v(c_i) \ge 0$ for every place of $K$ and every $i = 1,\ldots,s$. But by the product formula, this implies that $v(c_i) = 0$ for every place of $K$. Hence, each $c_i$ is a constant.
\end{proof}    
We now return to the problem of classifying the sets $S$ of \emph{quadratic} polynomials of the form $x^2+c_i$ for $c_i\in\mathbb{Q}$ for which $\Orb_S(0)$ contains a finite orbit point. In fact, we will see that the only such sets are:  
\[S=\big\{x^2\big\},\; \big\{x^2-1\big\},\; \big\{x^2-2\big\},\; \big\{x^2,\,x^2-1\big\},\;\big\{x^2-2,\, x^2-3\big\}\;\textup{or}\; \big\{x^2-2,\, x^2-6\big\}.\]

\vspace{-.1cm}       
\begin{remark} It is tempting to think that the classification above is trivial and follows from the fact that the only individual maps $x^2+c$ for $c\in\mathbb{Q}$ where $0$ has finite orbit are $c=0,-1,-2$ (i.e., the PCF maps). However, it is possible for $\Orb_S(P)$ to contain a finite orbit point for a set of quadratic polynomials $S$ \textbf{without} being preperiodic for any of the individual maps in $S$. For an explicit example, consider $S=\{x^2+x,x^2-6x\}$ and $P=2$.    
\end{remark}
In particular, since we now know that the coefficients of the polynomials in such $S$ are integral, we may use the classification of pairs of integral polynomials of the form $x^2+c$ possessing \emph{any} finite orbit point over $\mathbb{Q}$. This result follows from work in \cite[Section 2]{Me:FiniteOrbit}.      
\begin{lemma}\label{lem:intfinite} Let $S=\{x^2+c_1, x^2+c_2\}$ for some distinct $c_i\in\mathbb{Z}$. If $S$ has a finite orbit point $P\in\mathbb{Q}$, then up to reordering $c_1$ and $c_2$, we have that \vspace{.1cm}  
\[(c_1,c_2)=\Big(\frac{1-y^2}{4}, \frac{1-(y+2)^2}{4}\Big)\;\;\;\text{or}\;\;\;(c_1,c_2)=\Big(\frac{1-y^2}{4}, \frac{-3-y^2}{4}\Big) \vspace{.2cm}  \]
for some $y\in\mathbb{Z}$ and $y\equiv{\pm1}\Mod{4}$. 
\end{lemma}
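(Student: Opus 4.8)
The plan is to reduce the statement to a classification of pairs of integral maps $x^{2}+c$ that share an \emph{integral} preperiodic point, which is precisely the content of \cite[Section 2]{Me:FiniteOrbit}, and then to read off the two displayed parametrizations from that classification.

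First I would observe that if $P\in\Q$ is a finite orbit point for $S=\{\phi_{1},\phi_{2}\}$ with $\phi_{i}(x)=x^{2}+c_{i}$, then $\Orb_{\phi_{i}}(P)\subseteq\Orb_{S}(P)$ is finite, so $P$ is preperiodic for each individual map $\phi_{i}$. Write $P=a/b$ in lowest terms. If some prime $p$ divided $b$, then $v_{p}(P)<0$, whereas $v_{p}(c_{i})\ge 0$ since $c_{i}\in\Z$; this contradicts Lemma~\ref{lem:preper_val} applied to the valuation $v_{p}$ and the map $\phi_{i}$. Hence $b=1$ and $P\in\Z$. Thus the hypothesis is equivalent to the assertion that the pair $\{x^{2}+c_{1},x^{2}+c_{2}\}$ with distinct $c_{1},c_{2}\in\Z$ admits a common integral preperiodic point.

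Next I would invoke the classification of such pairs from \cite[Section 2]{Me:FiniteOrbit}: up to reordering, $(c_{1},c_{2})$ has one of the two shapes in the statement, and integrality of $\tfrac{1-y^{2}}{4}$ forces $y$ odd, i.e.\ $y\equiv\pm1\Mod{4}$. To confirm directly that these are indeed pairs with a common finite orbit point, note that $y$ odd makes $\tfrac{1+y}{2}$ an integer, and a one-line computation shows it is a fixed point of $x^{2}+c_{1}$ (the same $c_1$ in both families). In the first family one checks $\phi_{2}\big(\tfrac{1+y}{2}\big)=-\tfrac{y+1}{2}$, which is a fixed point of $x^{2}+c_{2}$; in the second family $\phi_{2}\big(\tfrac{1+y}{2}\big)=\tfrac{y-1}{2}$, which lies in the genuine $2$-cycle $\{\tfrac{y-1}{2},-\tfrac{y+1}{2}\}$ of $x^{2}+c_{2}$. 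In either case $\tfrac{1+y}{2}$ has finite $S$-orbit, which settles the lemma in one direction.

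The step I expect to be the real content — and which is being imported from \cite{Me:FiniteOrbit} rather than reproved here — is the converse: that no further pairs arise. That argument rests on the fact that an integral periodic point of $x^{2}+c$ with $c\in\Z$ has period at most $2$ (a telescoping comparison of consecutive differences around a cycle), together with size bounds constraining how a strictly preperiodic tail can attach to such a cycle; the requirement that two \emph{distinct} such maps share a point then cuts the possibilities down to the finite list above. Carrying out that finite Diophantine analysis cleanly, rather than citing it, would be the main obstacle to a fully self-contained proof.
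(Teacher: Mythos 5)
Your opening observation that $P$ must in fact be an integer (via Lemma~\ref{lem:preper_val} at each prime dividing the denominator) is correct and is a cleaner reduction than anything the paper makes explicit. However, there are two issues worth flagging, one minor and one substantive.

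The minor issue is the word \emph{equivalent}: you show that a finite $S$-orbit point is a common integral preperiodic point of $\phi_1$ and $\phi_2$, but the converse fails (a point can be preperiodic for each $\phi_i$ individually while its orbit under the full semigroup is infinite, since alternating the two maps can escape). Only the forward implication is needed, so nothing is lost, but the claim should be stated as an implication.

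The substantive gap is that you treat ``the classification from \cite[Section 2]{Me:FiniteOrbit}'' as a black box that outputs exactly the two displayed integral families, and this is not what that reference provides. The paper's actual proof cites three specific lemmas \cite[Lemmas 2.2, 2.3, 2.4]{Me:FiniteOrbit}, each governing one of the three ways $P$ can eventually cycle (fixed/fixed, fixed/2-cycle, 2-cycle/2-cycle) after applying \cite[Theorem 9]{Morton} and \cite[Exercise 2.20]{SilvBook}. Those lemmas give \emph{rational} parametrizations, and in each of Cases~1 and~2 there are \emph{two} such families: the simple one-parameter family in $y$ that appears in the statement, and an additional two-parameter-free rational family in $t$ such as $(c_1,c_2)=\bigl(\tfrac{t^4-18t^2+1}{4(t^2-1)^2},\tfrac{-3t^4-10t^2-3}{4(t^2-1)^2}\bigr)$; Case~3 has yet another $t$-family. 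The real work of the paper's proof is the Diophantine argument showing that these extra $t$-families (and the Case~3 family entirely) never yield \emph{integral} pairs $(c_1,c_2)$ --- e.g.\ by rewriting the $t$-family so that integrality forces integer solutions to $w^2-z^2=-4$, which has essentially none compatible with the parametrization. Your proposal never engages with these extra families; reducing to $P\in\Z$ does not by itself eliminate them, because the lemmas of \cite{Me:FiniteOrbit} are stated for rational $c_i$, not integral ones. So the step you call ``the main obstacle to a fully self-contained proof'' is in fact a step the paper does carry out, and it is the part of the argument that makes the lemma nontrivial beyond the citation.
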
 
\begin{proof} Let  $S=\{x^2+c_1, x^2+c_2\}$ for distinct $c_i\in\mathbb{Z}$ and assume that $P\in\mathbb{Q}$ is a finite orbit point for $S$. Then in particular, $P$ is a preperiodic point for both $\phi_1=x^2+c_1$ and $\phi_2=x^2+c_2$. Hence, \cite[Theorem 9]{Morton} and \cite[Exercise 2.20]{SilvBook} together imply that $P$ enters a $1$ or $2$-cycle for both $\phi_1$ and $\phi_2$ (meaning that there is an integer $n_i$ such that $\phi_i^{n_i}(P)$ is a fixed point or a periodic point of exact period $2$ for $\phi_i$). From here, we proceed in cases: \\[5pt] 
\textbf{Case(1):} $P$ enters a fixed point for both maps. In particular, both $\phi_1$ and $\phi_2$ have rational fixed points, and (after replacing $P$ with $\phi_1^{n_1}(P)$ for some $n_1$) we may assume that a fixed point for $\phi_1$ has finite orbit under $S$. Hence, the tuple $(c_1,c_2,P)$ satisfies the hypotheses of \cite[Lemma 2.2]{Me:FiniteOrbit}, and therefore the pair $(c_1,c_2)\in\mathbb{Z}\times\mathbb{Z}$ must be of the form \vspace{.1cm}
\begin{equation}\label{oldfact1} 
(c_1,c_2)=\bigg(\frac{1-y^2}{4}, \frac{1-(y+2)^2}{4}\bigg)\;\;\; \text{or}\;\;\; (c_1,c_2)=\bigg(\frac{t^4-18t^2+1}{4(t^2-1)^2}, \frac{-3t^4-10t^2-3}{4(t^2-1)^2}\bigg)\vspace{.1cm}
\end{equation} 
for some $y,t\in\mathbb{Q}$. However, in the case on the left $y\in\mathbb{Z}$ and $y\equiv{\pm1}\Mod{4}$ since $c_1$ is integral and $\mathbb{Z}\subseteq\mathbb{Q}$ is integrally closed. In particular, we recover the first family in the conclusion of Lemma \ref{lem:intfinite}. On the other hand, when $(c_1,c_2)=(\frac{t^4-18t^2+1}{4(t^2-1)^2}, \frac{-3t^4-10t^2-3}{4(t^2-1)^2})$, let $w=\frac{4t}{t^2-1}$ and $z=\frac{2t^2+2}{t^2-1}$. Then, we see that 
\[c_1=\frac{1-w^2}{4},\;\; c_2=\frac{1-z^2}{4},\;\;\text{and}\;\; w^2-z^2=-4.\] 
In particular, $w$ and $z$ are both integers since $c_1,c_2\in\mathbb{Z}$ and $\mathbb{Z}\subseteq\mathbb{Q}$ is integrally closed. However, it is straightforward to check that the only \emph{integral} solutions to $w^2-z^2=-4$ are $w=0$ and $z=\pm{2}$. But this restriction on $w=4t/(t^2-1)$ forces $t=0$ and $(c_1,c_2)=(1/4,-3/4)$, contradicting our assumption that $c_1$ and $c_2$ are integers. Hence, the only integral pairs of $c$'s in this case are given by $(c_1,c_2)=(\frac{1-y^2}{4}, \frac{1-(y+2)^2}{4})$ for some $y\in\mathbb{Z}$ and $y\equiv{\pm1}\Mod{4}$.            
\\[5pt] 
\textbf{Case(2):} $P$ enters a fixed point for one map and a $2$-cycle for the other. Then, without loss of generality, we may assume that $P$ enters a fixed point for $\phi_1$ and a $2$-cycle for $\phi_2$. In particular, $\phi_1$ has a rational fixed point and $\phi_2$ has a rational point of exact period $2$. Moreover, after replacing $P$ with $\phi_1^{n_1}(P)$ for some $n_1$, we may assume that a fixed point for $\phi_1$ has finite orbit under $S$. Hence, the tuple $(c_1,c_2,P)$ satisfies the hypotheses of \cite[Lemma 2.3]{Me:FiniteOrbit}, and therefore the pair $(c_1,c_2)\in\mathbb{Z}\times\mathbb{Z}$ must be of the form    
\begin{equation}\label{oldfact2} 
(c_1,c_2)=\bigg(\frac{1-y^2}{4}, \frac{-3-y^2}{4}\bigg)\;\;\; \text{or}\;\;\; (c_1,c_2)=\bigg(\frac{-15t^4-2t^2+1}{4(t^2-1)^2}, \frac{-3t^4-10t^2-3}{4(t^2-1)^2}\bigg)\vspace{.1cm}
\end{equation} 
for some $y,t\in\mathbb{Q}$. However, by a similar argument to that given in Case (1), only the left parametrization produces integral $c$-values. Moreover, $y\in\mathbb{Z}$ and $y\equiv{\pm1}\Mod{4}$ in that case.  
\\[5pt]
\textbf{Case(3):} $P$ enters a $2$-cycle both maps. In particular, both $\phi_1$ and $\phi_2$ have rational points of exact period $2$, and (after replacing $P$ with $\phi_1^{n_1}(P)$ for some $n_1$) we may assume that a rational point of exact period $2$ for $\phi_1$ has finite orbit under $S$. Hence, the tuple $(c_1,c_2,P)$ satisfies the hypotheses of \cite[Lemma 2.4]{Me:FiniteOrbit}, and therefore the pair $(c_1,c_2)\in\mathbb{Z}\times\mathbb{Z}$ must be of the form    
\begin{equation}\label{oldfact3} 
(c_1,c_2)=\bigg(\frac{-7t^4-2t^2-7}{4(t^2-1)^2}, \frac{-3t^4-10t^2-3}{4(t^2-1)^2}\bigg)\vspace{.1cm}
\end{equation} 
for some $t\in\mathbb{Q}$. However, by a similar argument to that given in Case (1), one can show that there are no integral $c$-values produced by this parametrization. 
\end{proof}

We also note that if $S=\{x^2+c_1,\dots,x^2+c_s\}$ over the integers has at least 3 polynomials, then there are \emph{no} rational finite orbit points for $S$. This result likely follows from Lemma \ref{lem:intfinite} above, but we simply quote this fact from \cite[Corollary 1.2]{Me:FiniteOrbit}.   
\begin{theorem}\label{thm:nopoints} Let $S=\{x^2 +c_1,x^2 +c_2,\dots,x^2 +c_s\}$ for some distinct $c_i\in\mathbb{Z}$. If $\#S\geq3$, then there are no points $P\in\mathbb{Q}$ with finite orbit for $S$.
\end{theorem}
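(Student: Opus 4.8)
The plan is to bootstrap from the two-polynomial classification in Lemma~\ref{lem:intfinite}. Suppose toward a contradiction that $P\in\mathbb{Q}$ has finite orbit for $S$, and fix three distinct elements $c_1,c_2,c_3$ of $S$. For any indices $i\ne j$, the orbit of $P$ under the sub-monoid generated by $\{x^2+c_i,\,x^2+c_j\}$ is contained in $\Orb_S(P)$, hence finite; so $P$ is a finite orbit point for each pair $\{x^2+c_i,\,x^2+c_j\}$, and Lemma~\ref{lem:intfinite} applies to every such pair.

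Next I would rewrite Lemma~\ref{lem:intfinite} in cleaner arithmetic terms. Putting $y=2k+1$, the first family becomes $(c_i,c_j)=\bigl(-k(k+1),\,-(k+1)(k+2)\bigr)$ and the second becomes $(c_i,c_j)=\bigl(-k(k+1),\,-k(k+1)-1\bigr)$. Thus, up to order, every admissible pair is either of \emph{Type I}, a pair of consecutive pronic values $\{-k(k+1),\,-(k+1)(k+2)\}$ with $k\ge0$, or of \emph{Type II}, a pair $\{-k(k+1),\,-k(k+1)-1\}$ with $k\ge0$. Set $V=\{-k(k+1):k\ge 0\}=\{0,-2,-6,-12,\dots\}$ and $W=\{-k(k+1)-1:k\ge0\}=\{-1,-3,-7,\dots\}$; since $k(k+1)$ is always even, $V$ consists of even integers and $W$ of odd integers, so $V\cap W=\varnothing$, and the map $k\mapsto -k(k+1)$ is injective on $k\ge0$. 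Each $c_i$ occurs as a coordinate of some admissible pair, so $c_i\in V\cup W$; and since no admissible pair has both coordinates in $W$, at most one of $c_1,c_2,c_3$ lies in $W$.

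Consequently at least two of the $c_i$, say $c_1,c_2$, lie in $V$; the pair $\{c_1,c_2\}$ then cannot be Type II and so is Type I, giving $c_1=-k(k+1)$, $c_2=-(k+1)(k+2)$ for some $k\ge0$ after relabeling. Now I would play $c_3$ against $c_1$ and $c_2$. If $c_3\in V$, write $c_3=-m(m+1)$ with $m\notin\{k,k+1\}$; the pair $\{c_1,c_3\}$ being Type I forces $\{k,m\}$ to be consecutive, hence $m=k-1$, whereas $\{c_2,c_3\}$ being Type I forces $\{k+1,m\}$ to be consecutive, hence $m\in\{k,k+2\}$ — impossible. If instead $c_3\in W$, the pairs $\{c_1,c_3\}$ and $\{c_2,c_3\}$ must each be Type II, which (by injectivity of $k\mapsto -k(k+1)$) forces $c_3=c_1-1$ and $c_3=c_2-1$ at once, impossible since $c_1\ne c_2$. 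In every case we reach a contradiction, so $S$ has no rational finite orbit point.

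The only real work, and it is routine, is the bookkeeping in the second step that collapses the two rational parametrizations of Lemma~\ref{lem:intfinite} into the tidy Type~I/Type~II dichotomy and records the parity and injectivity properties of $V$ and $W$; once these are in hand the three-element case is completely forced. (Alternatively one could argue directly from the classification of rational preperiodic points of $x^2+c$ due to Morton, but reusing Lemma~\ref{lem:intfinite} keeps the argument within the present section.)
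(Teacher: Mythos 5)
Your proof is correct, and it fills in something the paper deliberately leaves open: the paper simply cites Theorem~\ref{thm:nopoints} from \cite[Corollary 1.2]{Me:FiniteOrbit}, remarking only that it ``likely follows from Lemma~\ref{lem:intfinite} above.'' You have carried out exactly that bootstrapping and verified the authors' suspicion. The substitution $y=2k+1$ correctly recasts the two families from Lemma~\ref{lem:intfinite} as the Type~I pairs $\{-k(k+1),\,-(k+1)(k+2)\}$ and Type~II pairs $\{-k(k+1),\,-k(k+1)-1\}$; the restriction to $k\ge 0$ is harmless because $k$ and $-k-2$ (respectively $k$ and $-k-1$) yield the same unordered pair, and $k\mapsto -k(k+1)$ is strictly decreasing, hence injective, on $k\ge0$. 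The parity dichotomy $V$ (even) versus $W$ (odd) is exactly the right invariant: it shows Type~II has exactly one $W$-coordinate and Type~I none, so at most one $c_i$ can lie in $W$, forcing two of them into $V$ and hence into a Type~I pair with consecutive indices. The two-pronged contradiction for the third element --- incompatible consecutivity constraints if $c_3\in V$, and $c_3=c_1-1=c_2-1$ if $c_3\in W$ --- is airtight. The only difference from the paper is that the paper treats this as a black-box citation to an external corollary, whereas you give a self-contained derivation from the pairwise classification already present in the section; your version is the elementary confirmation the authors anticipated but did not write out.
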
 
Finally, we need the following observation, which roughly says that if $\Orb_S(0)$ contains a finite orbit point, then some pair of coefficients $c_i$ and $c_j$ must be close.   
\begin{lemma}\label{lem:0escapes} Let $S=\{x^2+c_1,\dots,x^2+c_s\}$ for some $c_i\in\mathbb{Z}$. If 
\begin{equation}\label{condition:r=2} 
|c_i^2+c_j|>\max_{1\leq k\leq s}\{|c_k|\}
\end{equation} 
for all $1\leq i,j\leq s$, then $\Orb_S(0)$ cannot contain a finite orbit point for $S$.    
\end{lemma}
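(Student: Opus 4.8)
The strategy is to show that under the hypothesis \eqref{condition:r=2}, every point in $\Orb_S(0)$ has strictly growing absolute value along every orbit, hence cannot be preperiodic for any single map, and in fact cannot be a finite orbit point for $S$. First I would set $M=\max_{1\le k\le s}\{|c_k|\}$ and suppose for contradiction that $\alpha\in\Orb_S(0)$ is a finite orbit point for $S$; by Proposition~\ref{prop:main} (applied with the archimedean valuation, or just directly since the $c_i$ are integers) we do not even need integrality input here, but it is harmless to keep it. The key elementary observation is that $|x^2+c_k|\ge |x|^2-M$, so once $|x|$ is large enough (say $|x|^2-M>|x|$, i.e. $|x|\ge 2$ suffices when $M\le 2$, and more generally $|x|>\tfrac{1+\sqrt{1+4M}}{2}$), applying any map in $S$ strictly increases the absolute value and keeps it large. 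So the only way to have a finite orbit point is for the entire orbit $\Orb_S(\alpha)$ — equivalently, since $\alpha\in\Orb_S(0)$ and $M_S$ is a monoid, we may as well work inside $\Orb_S(0)$ itself — to stay inside the ``small'' region $\{|x|\le R\}$ for the appropriate threshold $R$.

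Next I would pin down the values of $\Orb_S(0)$ that lie in this bounded region. Since $0\in\Orb_S(0)$, each $c_i=\theta_i(0)\in\Orb_S(0)$, and then each $c_i^2+c_j=(x^2+c_j)(c_i)\in\Orb_S(0)$. By hypothesis \eqref{condition:r=2}, $|c_i^2+c_j|>M$ for all $i,j$. I would then argue that once we are at a point of absolute value exceeding $M$ we have in fact escaped: I claim that if $|x|>M$ then $|x^2+c_k|>M$ as well, hence the orbit never returns to the bounded region and $\to\infty$. Indeed if $|x|>M\ge 1$ (the case $M=0$ meaning all $c_i=0$, so $S=\{x^2\}$, is trivial to handle separately — there $0$ is fixed and \eqref{condition:r=2} fails anyway, so we may assume $M\ge 1$), then $|x^2+c_k|\ge |x|^2-M > |x|\cdot M - M = M(|x|-1)\ge M$ with strict inequality unless $|x|=1$... so I would be slightly more careful: if $M\ge 1$ and $|x|>M$ then $|x|\ge M+1$ (as everything in sight is a rational integer once we have reduced to $\Z$-points — and indeed $\Orb_S(0)\subseteq\Z$ since the $c_i\in\Z$), giving $|x^2+c_k|\ge (M+1)^2-M=M^2+M+1>M$. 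Iterating, the orbit of any such $x$ tends to infinity, so $x$ is not a finite orbit point.

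Assembling: if $\alpha\in\Orb_S(0)$ is a finite orbit point for $S$, then since $\alpha$ lies on the orbit of $0$ we have a sequence $0=\beta_0,\beta_1=c_{i_1},\beta_2,\dots,\beta_n=\alpha$ with $\beta_{m+1}=\beta_m^2+c_{i_{m+1}}$. Then $\beta_2=c_{i_1}^2+c_{i_2}$ satisfies $|\beta_2|>M$ by \eqref{condition:r=2}, so by the escape argument $|\beta_m|\to\infty$ and in particular $\Orb_S(\beta_2)$ is infinite; since $\alpha\in\Orb_S(\beta_2)\subseteq M_S(\beta_2)$ and $\alpha$ has finite orbit, this is a contradiction (a finite orbit point's forward orbit under $M_S$ is finite, and $\beta_2$ is in $\Orb_S(0)$, while everything reachable from $\beta_2$ is reachable from $0$, hence in $\Orb_S(0)$, which would then be infinite — but more directly, $\alpha$ having finite orbit forces every point it can reach, and in particular the tail of the $\beta$-sequence, to be bounded). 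The one genuine subtlety — the step I'd expect to require the most care — is making the ``escape is permanent'' bound clean: one must be careful that $|x^2+c_k|\ge|x|^2-M$ combined with $|x|>M$ really does force strict growth for \emph{all} $k$ simultaneously and that no cancellation can bring a later iterate back below $M$; using integrality of $\Orb_S(0)$ (so $|x|>M\Rightarrow|x|\ge M+1$) is what makes this go through cleanly, and I would state that reduction to $\Z$-points explicitly at the start.
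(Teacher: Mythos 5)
Your escape mechanism is correct and is the same idea the paper uses. Both rely on the observation that once a point of $\Orb_S(0)\subseteq\Z$ has absolute value exceeding $M:=\max_k|c_k|$, every further application of a map in $S$ strictly increases the absolute value (integrality gives $|x|\ge M+1$, so $|x^2+c_k|\ge|x|^2-M>|x|$ because $|x|(|x|-1)\ge(M+1)M>M$ for $M\ge1$), and that $|c_i^2+c_j|>M$ guarantees this threshold is crossed by composition depth $2$. The paper packages the escape as an induction on $U_n:=\max\{|a|:a\in M_{S,n}(0)\}$ and $L_n:=\min\{|a|:a\in M_{S,n}(0)\}$, proving $U_n<L_{n+1}$ so that $L_n\to\infty$ uniformly over all of level $n$; you follow a single chain. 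These are essentially the same argument.

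The place your write-up actually goes astray is the final ``assembling'' step, where two of the three justifications offered are not contradictions. The claim that ``$\alpha\in\Orb_S(\beta_2)$ and $\alpha$ has finite orbit'' contradicts $\Orb_S(\beta_2)$ being infinite is false as stated---a finite set can perfectly well sit inside an infinite one---and it also tacitly requires $n\ge2$ for $\beta_2$ to exist. What you actually need (and escape gives you) is that $|\alpha|=|\beta_n|>M$ once $n\ge2$, so $\Orb_S(\alpha)$ is itself unbounded; that observation must be made explicit. Similarly, deducing that ``$\Orb_S(0)$ would be infinite'' is not a contradiction: the hypothesis says only that $\Orb_S(0)$ \emph{contains} a finite-orbit point, not that $\Orb_S(0)$ is finite. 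Your ``more directly'' parenthetical is the argument that works; to make it precise, extend the chain arbitrarily far past $\beta_n=\alpha$ (which also repairs the $n\ge2$ issue), note that the tail $\beta_{n+1},\beta_{n+2},\dots$ lies in $\Orb_S(\alpha)$ and hence is bounded, and observe that escape forces $|\beta_m|\to\infty$. The paper's uniform bound $L_m\to\infty$ makes this step immediate: every element of $\Orb_S(\alpha)$ lies in some $M_{S,m}(0)$ with $m\ge n$ and thus has absolute value at least $L_m$.
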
 
\begin{proof} We begin with some notation. For $n\geq1$ define $M_{S,n}=\{\theta_1\circ\dots\circ \theta_n\,:\, \theta_i\in S\}$ and $M_{S,0}=\{\text{id}\}$. Likewise, let $M_{S,n}(0)=\{f(0)\,:\, f\in M_{S,n}\}$, let $U_n=\max\{|a|\,:\, a\in M_{S,n}(0)\}$, and let $L_n=\min\{|a|\,:\, a\in M_{S,n}(0)\}$. Now assume that \eqref{condition:r=2} holds. We prove that 
\begin{equation}\label{UnLn}
U_n<L_{n+1}\;\text{for $n\geq0$.}
\end{equation} 
by induction. 
Note first that the statement above is true for $n=0$ since \eqref{condition:r=2} implies that none of the $c_i$'s is $0$. Moreover, \eqref{UnLn} is exactly \eqref{condition:r=2} for $n=1$. Now suppose that \eqref{condition:r=2} is true for all $n\leq N$ with $N\geq1$ and let $\ell\in M_{S,N+2}(0)$ be such that $|\ell|=L_{N+2}$ and let $u\in M_{S,N+1}(0)$ be such that $|u|=U_{N+1}$. Next write $\ell=a^2+c_i$ for some $a\in M_{S,N+1}(0)$. Then since $|a|\geq L_{N+1}\geq U_N+1$, we have that
\begin{equation}\label{UnLn2}
|\ell|\geq\ell\geq U_N^2+2U_N+1+c_i\geq U_N^2+U_N+1;
\end{equation}
here the last inequality follows from the fact that $U_N\geq U_1$ by the induction hypothesis (and that $N\geq1$) and that $U_1=\max_{1\leq i\leq s}\{|c_i|\}$. On the other hand, we may write $u=b^2+c_j$ for some $b\in M_{S,N}(0)$. Then, since $b\leq U_N$ and $c_j\leq U_1\leq U_N$, we see that 
\begin{equation}\label{UnLn3}
|u|\leq b^2+|c_j|\leq U_N^2+U_N.
\end{equation} 
In particular, \eqref{UnLn} follows from combining \eqref{UnLn2} and \eqref{UnLn3}. But then $\{L_n\}$ is a strictly increasing sequence of integers. Hence, for all $B$ there exists $m=m(B)$ such that $|F(0)|>B$ for all $F\in M_{S,n}$ with $n\geq m$. This precludes the possibility of $\Orb_S(0)$ containing a finite orbit point: if $g(0)$ is a finite orbit point, then $|f(g(0))|\leq B$ for some $B$ and all $f\in M_S$, a contradiction.            
\end{proof} 
\begin{remark}\label{rem:0escape} In particular, if $S=\{x^2+c_1,x^2+c_2\}$ for $c_i\in\mathbb{Z}$ and 
\[|c_i^2+c_j|>|c_1|+|c_2|\qquad \text{for all $1\leq i,j\leq2$},\] 
then $\Orb_S(0)$ cannot contain a finite orbit point for $S$.      
\end{remark}
We now have all of the tools in place to classify the sets $S=\{x^2+c_1,\dots,x^2+c_s\}$ over the rational numbers for which $\Orb_S(0)$ contains a finite orbit point; this is part (1) of Theorem \ref{thm:obstruction} from the Introduction.   
\begin{theorem}\label{thm:finiteorbit} Let $S=\{x^2+c_1,\dots, x^2+c_s\}$ be a set of quadratic polynomials over $\mathbb{Q}$. If $\Orb_S(0)$ contains a finite orbit point, then $S$ is one of the following exceptional sets: 
\[S=\big\{x^2\big\},\; \big\{x^2-1\big\},\; \big\{x^2-2\big\},\; \big\{x^2,\,x^2-1\big\},\;\big\{x^2-2,\, x^2-3\big\}\;\textup{or}\; \big\{x^2-2,\, x^2-6\big\}. \vspace{.2cm}\] 
\end{theorem}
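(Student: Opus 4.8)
The plan is to split the argument according to the cardinality of $S$. If $\#S \geq 3$, then by Theorem \ref{thm:nopoints} there are no rational finite-orbit points for $S$ at all; since $0 \in \Orb_S(0)$ and every point of $\Orb_S(0)$ would have to be a finite-orbit point whenever one of them is, no such $S$ with three or more elements can occur. This reduces everything to the cases $\#S = 1$ and $\#S = 2$, and in both cases Corollary \ref{cor:int} (applied with all $d_i = 2$) tells us the coefficients $c_i$ are algebraic integers, hence rational integers; so throughout we work with $c_i \in \mathbb{Z}$.

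For $\#S = 1$, $S = \{x^2 + c\}$ with $c \in \mathbb{Z}$, and $\Orb_S(0)$ containing a finite-orbit point is exactly the condition that $0$ is preperiodic for $x^2 + c$. By the classical result (e.g.\ \cite[Theorem 9]{Morton}, \cite[Exercise 2.20]{SilvBook}) the only integer values are $c = 0, -1, -2$, giving the three singleton sets. For $\#S = 2$, write $S = \{x^2 + c_1, x^2 + c_2\}$ with distinct $c_1, c_2 \in \mathbb{Z}$. First apply Lemma \ref{lem:0escapes}/Remark \ref{rem:0escape}: if $|c_i^2 + c_j| > |c_1| + |c_2|$ for all $i, j \in \{1, 2\}$ then $\Orb_S(0)$ has no finite-orbit point, so we may assume this inequality fails for some pair $(i,j)$, i.e.\ $|c_i^2 + c_j| \leq |c_1| + |c_2|$. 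This is a strong constraint that bounds the coefficients: one checks it forces $|c_1|, |c_2|$ to be small (the quadratic growth of $c_i^2$ must be dominated by a linear expression in $|c_1|, |c_2|$), leaving only finitely many candidate pairs to examine by hand. Simultaneously, since $\Orb_S(0)$ contains \emph{some} finite-orbit point $P \in \mathbb{Q}$, Lemma \ref{lem:intfinite} says $(c_1, c_2)$ lies, up to order, in one of the two explicit families $\big(\frac{1-y^2}{4}, \frac{1-(y+2)^2}{4}\big)$ or $\big(\frac{1-y^2}{4}, \frac{-3-y^2}{4}\big)$ with $y \in \mathbb{Z}$, $y \equiv \pm 1 \pmod 4$.

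Combining these two constraints finishes the proof: intersect the finitely many small-coefficient candidate pairs from the escape bound with the two parametrized families from Lemma \ref{lem:intfinite}, and for each surviving pair directly verify whether $\Orb_S(0)$ actually contains a finite-orbit point (equivalently, whether the forward semigroup orbit of $0$ fails to grow without bound). One finds that the only survivors are $\{x^2, x^2 - 1\}$ (from the first family with small $y$), $\{x^2 - 2, x^2 - 3\}$, and $\{x^2 - 2, x^2 - 6\}$ — the last two being exactly the pairs for which $-2$, a finite-orbit point, lies in $\Orb_S(0)$. I expect the main obstacle to be the bookkeeping in the $\#S = 2$ case: turning the inequality $|c_i^2 + c_j| \leq |c_1| + |c_2|$ into a genuinely finite, explicitly enumerable list of pairs (one must be careful about signs, since $c_i$ may be negative and $c_i^2 + c_j$ could be small through cancellation), and then checking each candidate against both the parametrization and the actual orbit condition without missing a case. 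The $\#S \geq 3$ and $\#S = 1$ cases are essentially immediate from the quoted results.
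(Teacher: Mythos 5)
The overall architecture you lay out is the same as the paper's: Corollary \ref{cor:int} forces $c_i\in\mathbb{Z}$, Theorem \ref{thm:nopoints} kills $\#S\geq 3$, the PCF classification handles $\#S=1$, and for $\#S=2$ you combine Lemma \ref{lem:intfinite} with the escape inequality from Remark \ref{rem:0escape}. That is the right collection of ingredients. However, there is a genuine gap in the step that actually makes the $\#S=2$ case finite.

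You assert that the negation of the escape condition, namely $|c_i^2 + c_j|\leq |c_1|+|c_2|$ for some $i,j\in\{1,2\}$, ``forces $|c_1|,|c_2|$ to be small'' because ``the quadratic growth of $c_i^2$ must be dominated by a linear expression.'' This is false as stated: the inequality has solutions with $|c_1|,|c_2|$ arbitrarily large. For example, with $c_1=m$ and $c_2=-m^2$ one has $|c_1^2+c_2|=0\leq |c_1|+|c_2|$ for every integer $m$. So the escape bound alone does not produce a ``finite list of small-coefficient candidate pairs'' that you can then intersect with the parametrizations; you flag cancellation as a bookkeeping concern, but it is actually the reason this step, as written, cannot be carried out. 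The set of integer pairs failing the escape condition is infinite, and the finiteness you need only materializes after you combine the two constraints in the opposite order. The paper's proof does exactly this: it first invokes Lemma \ref{lem:intfinite} to put $(c_1,c_2)$ on a one-parameter curve (parametrized by $y\in\mathbb{Z}$, $y\equiv\pm 1\pmod 4$), and only then substitutes this into the escape inequality. After substitution, both sides become polynomials in the single variable $y$, with the left side of degree $4$ and the right side of degree $2$, so the inequality genuinely confines $y$ to a bounded real interval; one then checks the finitely many admissible integer values of $y$. Your proposal would be repaired by reversing the order in which the two constraints are applied — parametrize first, then bound — at which point the remaining bookkeeping is indeed routine. (A small secondary slip: $\{x^2,\,x^2-1\}$ arises from the second parametrized family, not the first.)
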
 
\begin{proof} Let $S=\{x^2+c_1,\dots,x^2+c_s\}$ for some $c_i\in\mathbb{Q}$ and suppose that $\Orb_S(0)$ contains a finite orbit point for $S$. Then Corollary \ref{cor:int} implies that each $c_i\in\mathbb{Z}$ and Theorem \ref{thm:nopoints} implies that $\#S\leq2$. If $\#S=1$, then write $S=\{\phi\}$. But in this case, if $\Orb_S(0)$ contains a finite orbit point, then $0$ itself is a finite orbit point for $\phi$. Hence, $\phi$ is a post-critically finite (PCF) map of the form $\phi=x^2+c$ and $c\in\mathbb{Z}$. However, it is well known that the only $c$ with this property are $c=0$,$-1$, and $-2$. That is, $S=\{x^2\}$, $\{x^2-1\}$, and $\{x^2-2\}$ are the only singleton sets (of the desired form) for which $\Orb_S(0)$ contains a finite orbit point. 

It therefore remains to consider the case when $\#S=2$, say $S=\{x^2+c_1,x^2+c_2\}$ for some distinct $c_i\in\mathbb{Z}$. Now, since $\Orb_S(0)$ contains a finite orbit point for $S$, Lemma \ref{lem:intfinite} implies 
\[(c_1,c_2)=\Big(\frac{1-y^2}{4}, \frac{1-(y+2)^2}{4}\Big)\;\;\;\text{or}\;\;\;(c_1,c_2)=\Big(\frac{1-y^2}{4}, \frac{-3-y^2}{4}\Big) \vspace{.2cm}  \]
for some $y\in\mathbb{Z}$ and $y\equiv{\pm1}\Mod{4}$, up to reordering the $c$'s. Suppose first, without loss of generality, that $(c_1,c_2)=(\frac{1-y^2}{4}, \frac{1-(y+2)^2}{4})$. Then, after substituting these expressions in for $c_1$ and $c_2$ into Remark \ref{rem:0escape}, we see that at least one of the following inequalities must hold: \vspace{.15cm} 
\begin{align*} 
\Big|\frac{1}{16}y^4 - \frac{3}{8}y^2 + \frac{5}{16}\Big|&\leq \Big|-\frac{1}{4}y^2 + \frac{1}{4}\Big|+\Big|-\frac{1}{4}y^2 - y - \frac{3}{4}\Big|, \\[5pt]  
\Big|\frac{1}{16}y^4 - \frac{3}{8}y^2 - y - \frac{11}{16}\Big|&\leq  \Big|-\frac{1}{4}y^2 + \frac{1}{4}\Big|+\Big|-\frac{1}{4}y^2 - y - \frac{3}{4}\Big|, \\[5pt] 
\Big|\frac{1}{16}y^4 + \frac{1}{2}y^3 + \frac{9}{8}y^2 + \frac{3}{2}y + \frac{13}{16}\Big|&\leq \Big|-\frac{1}{4}y^2 + \frac{1}{4}\Big|+\Big|-\frac{1}{4}y^2 - y - \frac{3}{4}\Big|, \\[5pt]
\Big|\frac{1}{16}y^4 + \frac{1}{2}y^3 + \frac{9}{8}y^2 + \frac{1}{2}y - \frac{3}{16} \Big|&\leq \Big|-\frac{1}{4}y^2 + \frac{1}{4}\Big|+\Big|-\frac{1}{4}y^2 - y - \frac{3}{4}\Big|. \\  
\end{align*} 
\vspace{-.7cm} 

\noindent But each of these inequalities is true only on some bounded, real interval. Moreover, since we have only a single real parameter $y$, it is a one-variable calculus 
problem to determine each of these intervals. In particular, it is straightforward to check that as a real number $y\in[-6.8,4.7]$, otherwise \emph{all} of the above inequalities fail. On the other hand, $y\in\mathbb{Z}$ and $y\equiv{\pm1}\Mod{4}$ so that $y\in\{-5,-3,-1,1,3\}$. These specific values of $y$ determine the sets $S=\{x^2,x^2-2\}$ and $S=\{x^2-2,x^2-6\}$. Moreover, among these sets, only $S=\{x^2-2,x^2-6\}$ has the desired property that $\Orb_S(0)$ contains a finite orbit point. In this case, $-2\in\Orb_S(0)$ and $-2$ is a finite orbit point for $S$.

Now for the second family from Lemma \ref{lem:intfinite}. Suppose, without loss of generality, that $(c_1,c_2)=(\frac{1-y^2}{4}, \frac{-3-y^2}{4})$ for some $y\in\mathbb{Z}$ and $y\equiv{\pm1}\Mod{4}$. Then, after substituting these expressions in for $c_1$ and $c_2$ into Remark \ref{rem:0escape}, we see that at least one of the following inequalities must hold: \vspace{.15cm} 
\begin{align*} 
\Big|\frac{1}{16}y^4 - \frac{3}{8}y^2 + \frac{5}{16}\Big|&\leq \Big| -\frac{1}{4}y^2 + \frac{1}{4}\Big|+\Big| -\frac{1}{4}y^2 - \frac{3}{4}\Big|, \\[5pt]  
\Big| \frac{1}{16}y^4 - \frac{3}{8}y^2 - \frac{11}{16} \Big|&\leq \Big| -\frac{1}{4}y^2 + \frac{1}{4}\Big|+\Big| -\frac{1}{4}y^2 - \frac{3}{4}\Big|  , \\[5pt] 
\Big| \frac{1}{16}y^4 + \frac{1}{8}y^2 + \frac{13}{16} \Big|&\leq \Big| -\frac{1}{4}y^2 + \frac{1}{4}\Big|+\Big| -\frac{1}{4}y^2 - \frac{3}{4}\Big| , \\[5pt]
\Big| \frac{1}{16}y^4 + \frac{1}{8}y^2 - \frac{3}{16} \Big|&\leq \Big| -\frac{1}{4}y^2 + \frac{1}{4}\Big|+\Big| -\frac{1}{4}y^2 - \frac{3}{4}\Big|. \\  
\end{align*} 
\vspace{-.7cm} 

\noindent But, as before, each of these inequalities is true only on some bounded, real interval. Hence, it is straightforward to check that as a real number $y\in[-4.9,4.9]$, otherwise \emph{all} of the above inequalities fail. On the other hand, $y\in\mathbb{Z}$ and $y\equiv{\pm1}\Mod{4}$ so that $y\in\{-3,-1,1,3\}$. These specific values of $y$ determine the sets          $S=\{x^2,x^2-1\}$ and $S=\{x^2-2,x^2-3\}$. Moreover, both of these sets have the desired property that $\Orb_S(0)$ contains a finite orbit point. In the first case, $0$ is itself a finite orbit point. While in the second case, $-2\in\Orb_S(0)$ is a finite orbit point. This completes the classification in Theorem \ref{thm:finiteorbit}.   
\end{proof} 
\section{Infinite index representations over $\mathbb{Q}$}\label{sec:arboverZ}
We next prove that if $\Orb_S(0)$ contains a finite orbit point, then $S$ produces finite index arboreal representations with probability zero. To do this, we first establish the stability of some relevant sequences. Recall that $M_S$ denotes the semigroup generated by $S$ under composition, that $\nu$ is a strictly positive probability measure on $S$, and that $\bar{\nu}=\nu^{\mathbb{N}}$ is the corresponding product measure on $\Phi_S=S^{\mathbb{N}}$.  
\begin{lemma}\label{lem:specificstable1} Let $S=\{x^2-2,x^2-3\}$, and let $f\in M_S$. Then the following statements hold:  \vspace{.1cm}
\begin{enumerate} 
\item[\textup{(1)}]  If $f(0)\equiv 2\Mod{4}$, then $f$ is Eisenstein at the prime $p=2$. \vspace{.1cm}  
\item[\textup{(2)}]  If $f(0)\equiv\pm{1}\Mod{4}$, then $f(x+1)$ is Eisenstein at the prime $p=2$.  \vspace{.1cm}   
\end{enumerate} 
In particular, every $f\in M_S$ is irreducible over $\mathbb{Q}$.   
\end{lemma}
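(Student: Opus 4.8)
The plan is to reduce both parts of the lemma to two congruence facts about elements $f = \theta_1\circ\cdots\circ\theta_n\in M_S$ (with $n\geq 1$), each proved by induction on the composition length $n$:
\begin{itemize}
\item[(i)] In $\F_2[x]$ we have $\overline{f}\in\{x^{2^n},\,(x+1)^{2^n}\}$;
\item[(ii)] the pair $(f(0)\bmod 4,\; f(1)\bmod 4)$ lies in $\mathcal{P}:=\{(1,2),(2,1),(2,3),(3,2)\}$.
\end{itemize}
Fact (i) is the easy step: modulo $2$ the generators reduce to $\overline{x^2-2}=x^2$ and $\overline{x^2-3}=x^2+1=(x+1)^2$, and if $f=\theta\circ h$ with $\theta=x^2-c\in S$ and $h\in M_S$ of length $n-1$, then $\overline{f}=\overline{h}^{\,2}$ or $\overline{h}^{\,2}+1$; since $\{x^{2^{m}},(x+1)^{2^{m}}\}$ is closed under squaring and under adding $1$ (using $x^{2^m}+1=(x+1)^{2^m}$ in $\F_2[x]$), the claim propagates, with base case $f\in S$. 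For fact (ii) the induction uses fact (i): from $\overline{h}\in\{x^{2^{n-1}},(x+1)^{2^{n-1}}\}$ one reads off that $h(0)$ and $h(1)$ have opposite parity, so, since $a^2\equiv 0$ or $1\pmod 4$ according as $a$ is even or odd, the values $f(0)=h(0)^2-c$ and $f(1)=h(1)^2-c$ are determined mod $4$ by the parity of $h(0)$ and by $c\in\{2,3\}$; checking the four resulting cases confirms $(f(0),f(1))\bmod 4\in\mathcal{P}$ every time. The base case $n=1$ is the direct check that $x^2-2$ has $(f(0),f(1))\equiv(2,3)$ and $x^2-3$ has $(f(0),f(1))\equiv(1,2)$.

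Granting (i) and (ii), the lemma follows. Since $\overline{f}(0)=f(0)\bmod 2$, fact (i) says $\overline{f}=x^{2^n}$ when $f(0)$ is even and $\overline{f}=x^{2^n}+1=(x+1)^{2^n}$ when $f(0)$ is odd; in the latter case $\overline{f(x+1)}=\overline{f}(x+1)=(x+1+1)^{2^n}=x^{2^n}$ in $\F_2[x]$. Now if $f(0)\equiv 2\pmod 4$, then $f$ is monic, all of its non-leading coefficients are even (as $\overline{f}=x^{2^n}$), and its constant term $f(0)$ is divisible by $2$ but not $4$, so $f$ is Eisenstein at $2$. If instead $f(0)\equiv\pm 1\pmod 4$, then $f(0)$ is odd, so the pair in (ii) must be $(1,2)$ or $(3,2)$; hence $f(1)\equiv 2\pmod 4$, while $f(x+1)$ is monic with all non-leading coefficients even and constant term $f(1)$ divisible by $2$ but not $4$, so $f(x+1)$ is Eisenstein at $2$. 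This proves parts (1) and (2).

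For the final assertion, observe that membership in $\mathcal{P}$ forces $f(0)\not\equiv 0\pmod 4$, so every $f\in M_S$ falls under case (1) or case (2). In case (1), Eisenstein's criterion gives that $f$ is irreducible over $\Q$; in case (2), it gives that $f(x+1)$ is irreducible over $\Q$, and therefore so is its translate $f(x)$.

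The main obstacle is the mod-$4$ bookkeeping in fact (ii): one has to verify that the orbit values $\theta_n(0),\ \theta_{n-1}(\theta_n(0)),\dots$ never become divisible by $4$ and that the "twist" governing $f(1)\bmod 4$ always lands in the residue needed to make $f(x+1)$ Eisenstein. This is a finite state-machine computation and presents no real difficulty once the opposite-parity observation (extracted from fact (i)) is available; everything else is routine.
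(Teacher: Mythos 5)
Your proof is correct and takes essentially the same route as the paper: both rely on the observation that reduction mod $2$ forces $\overline{f}\in\{x^{2^n},(x+1)^{2^n}\}$ (so the non-leading coefficients of $f$ or of $f(x+1)$ are all even), and then verify that the relevant constant term ($f(0)$ or $f(1)$) is $\equiv 2\pmod 4$. The only cosmetic difference is that the paper deduces $f(0),f(1)\not\equiv 0\pmod 4$ in one line from the fact that $x^2-2$ and $x^2-3$ have no roots mod $4$, whereas you track the full pair $(f(0),f(1))\bmod 4$ by an explicit induction through the set $\mathcal{P}$ --- slightly heavier bookkeeping, but equivalent.
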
 
\begin{remark} Since $1\in\Orb_S(0)$, proving that every $f\in M_S$ is irreducible over $\mathbb{Q}$ using only Proposition \ref{prop:stability} seems unlikely. Thus the need to use a different technique (in this case, Eisenstein's criterion).    
\end{remark} 
\begin{proof} We begin with some notation. Write $\phi_1=x^2-2$ and $\phi_2=x^2-3$, and let $F=x^2$ and $L=x+1$. Then clearly $\phi_1\equiv F\Mod{2}$, so that $\phi_1$ and $\phi_2$ must commute mod $2$; every polynomial commutes with $F$ mod $2$. In particular, if $f\in M_S$, then we can write $f\equiv \phi_1^n\circ\phi_2^m\Mod{2}$ for some $n,m\geq0$. On the other hand, $\phi_2\equiv (x+1)^2\equiv F\circ L\Mod{2}$ and $L\circ L\equiv x\Mod{2}$. Therefore, every $f\in M_S$ is of the form 
\begin{equation}\label{mod2}  
f\equiv F^n\Mod{2}\qquad \text{or} \qquad f\equiv F^n\circ L\Mod{2}
\end{equation} 
for some $n\geq0$. From here we proceed in cases depending on the congruence class of the constant term of $f$ modulo 4. Note that if $f\in M_S$ is not the identity, then $f(0)\not\equiv0\Mod{4}$, since both $x^2-2$ and $x^2-3$ have no roots modulo $4$.  Hence, we need not consider this case. Suppose first that $f(0)\equiv 2\Mod{4}$. Then, $f(0)\equiv 0\Mod{2}$, and \eqref{mod2} implies that $f\equiv F^n\Mod{2}$. In particular, $f$ satisfies Eisenstein's irreducibility criterion at the prime $p=2$ in this case. On the other hand, if $f(0)\equiv\pm{1}\Mod{4}$, then $f(0)\equiv 1\Mod{2}$ and \eqref{mod2} implies that $f\equiv F^n\circ L\Mod{2}$. Therefore, $f(x+1)\equiv F^n\Mod{2}$. Moreover, the constant term $f(1)$ of $f(x+1)$ is not $0$ mod $4$, again since both $x^2-2$ and $x^2-3$ have no roots in $\mathbb{Z}/4\mathbb{Z}$. Therefore, $f(x+1)$ is Eisenstein at the prime $p=2$ as claimed.              
\end{proof} 
\begin{lemma}\label{lem:specificstable2} Let $S=\{x^2-2,x^2-6\}$. Then every $f\in M_S$ is irreducible over $\mathbb{Q}$.
\end{lemma}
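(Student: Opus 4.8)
The plan is to follow the strategy of Lemma~\ref{lem:specificstable1} and show directly that every non-identity $f \in M_S$ is Eisenstein at the prime $p = 2$; irreducibility over $\mathbb{Q}$ then follows from Gauss's lemma. (The identity map $x$ is linear, hence trivially irreducible, so it may be ignored.) Since $x^2 - 2 \equiv x^2 \equiv x^2 - 6 \Mod{2}$, every element of $S$ reduces to $F = x^2$ modulo $2$; consequently, if $f = \theta_1 \circ \cdots \circ \theta_n \in M_S$ with each $\theta_i \in S$, then $f \equiv F^{\circ n} = x^{2^n} \Mod{2}$, so every coefficient of $f$ except the leading one is even.

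It remains to control the constant term $f(0)$ modulo $4$, and here I would argue by induction on the number $n$ of maps composed. Write $\phi_1 = x^2 - 2$ and $\phi_2 = x^2 - 6$. When $n = 1$ we have $f(0) \in \{-2, -6\}$, each $\equiv 2 \Mod{4}$. For $n \ge 2$, write $f = \phi_i \circ g$ with $g = \theta_2 \circ \cdots \circ \theta_n$ a non-identity element of $M_S$; by the inductive hypothesis $g(0) \equiv 2 \Mod{4}$, say $g(0) = 2m$ with $m$ odd, and then
\[
f(0) = g(0)^2 + c_i = 4m^2 + c_i \equiv 0 + 2 \equiv 2 \Mod{4},
\]
since $c_i \in \{-2,-6\}$ and $-2 \equiv -6 \equiv 2 \Mod{4}$. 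Hence $f(0) \equiv 2 \Mod{4}$ for every non-identity $f \in M_S$.

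Putting these two facts together, a non-identity $f \in M_S$ has leading coefficient $1$, all lower coefficients divisible by $2$, and constant term divisible by $2$ but not by $4$; thus $f$ satisfies Eisenstein's criterion at $p = 2$ and is therefore irreducible over $\mathbb{Q}$. I do not anticipate a genuine obstacle: the only point requiring a little care is organizing the induction by peeling a map off on the \emph{left} (so that the inner composite is the one evaluated at $0$), together with the elementary observation that the residue class $2$ modulo $4$ is preserved by both $\phi_1$ and $\phi_2$ because the square of $2 \times (\textup{odd})$ already lies in $4\mathbb{Z}$.
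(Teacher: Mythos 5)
Your proof is correct and takes essentially the same approach as the paper's: both establish that every non-identity $f\in M_S$ reduces to $x^{2^n}$ modulo $2$ and has constant term $\equiv 2 \Mod 4$, then invoke Eisenstein at $p=2$. The only cosmetic difference is that the paper verifies the constant-term condition by observing $\phi_1\equiv\phi_2\Mod 4$ (so that $f\equiv\phi_1^n\Mod 4$ and one just computes $\phi_1^n(0)$), whereas you run a direct induction peeling off the outermost map; both are equally valid.
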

\begin{proof} Let $\phi_1=x^2-2$, let $\phi_2=x^2-6$, and let $F=x^2$. Note that $\phi_1\equiv\phi_2\equiv F\Mod{2}$, and therefore every $f\in M_S$ is of the form $f\equiv F^{n}\Mod{2}$ for some $n\geq0$. Likewise, $\phi_1\equiv\phi_2\Mod{4}$, and hence every $f\in M_S$ is of the form $f\equiv \phi_1^{n}\Mod{4}$ for some $n\geq0$. Moreover, it is straightforward to check that $\phi_1(0)=-2$ and $\phi_1^n(0)=2$ for all $n\geq2$. In particular, $f(0)\equiv\pm{2}\Mod{4}$ for all non-identity $f\in M_S$. Hence, such $f$ are Eisenstein at $p=2$. Therefore, every $f\in M_S$ is irreducible over $\mathbb{Q}$.          
\end{proof} 
We now have the tools in place to prove that if $\Orb_S(0)$ contains a finite orbit point, then $S$ cannot produce finite index arboreal representations with positive probability. 
\begin{remark} The proof of Theorem \ref{thm:obstruction} part (2) below relies on the classification in part (1) in only one way: to ensure that the relevant sets $S$, those for which $\Orb_S(0)$ contains a finite orbit point, also produce stable sequences with probability one. This is likely (by analogy with the case of iterating a single function \cite[Theorem 3.1]{Jones-Survey}) not necessary - $\Orb_S(0)$ containing a finite orbit point should be sufficient to prove infinite index with probability one, without stability assumptions. However, our reliance on stability in the proof of infinite index hinges on our use of Proposition \ref{prop:maximality} above.    
\end{remark} 
\begin{proof}[(Proof of Theorem \ref{thm:obstruction} part \textup{(2)})] Assume that $S$ is one of the exceptional sets in Theorem \ref{thm:obstruction} part (1). If $S=\{x^2\}, \{x^2-1\}, \{x^2-2\}$, or $\{x^2,x^2-1\}$, then it is straightforward to check that the full orbit of $0$ is finite. In particular, if $\gamma\in\Phi_S^{\text{sep}}$ is any sequence of elements of $S$, then the discriminant formula in \cite[Proposition 6.3]{Me:LeftRightTotal} implies that $K_\infty(\gamma)=\bigcup_n K_n(\gamma)$ is a finitely ramified extension. 
Moreover, the same proof of infinite index in \cite[Theorem 3.1]{Jones-Survey} applies in this more general setting: $G_{\gamma,K}$ is (topologically) generated by the conjugacy classes of finitely many elements, and such subgroups of $\Aut(T_\gamma)$ must have infinite index.

In particular, it suffices to consider $S=\{x^2-2,x^2-3\}$ and $S=\{x^2-2,x^2-6\}$. However in both cases, every possible $\gamma_{n}$ is irreducible over $\mathbb{Q}$ for all $\gamma\in\Phi_S$ and all $n\geq0$ by Lemma \ref{lem:specificstable1} and Lemma \ref{lem:specificstable2} respectively. On the other hand, since $\Orb_S(0)$ contains a finite orbit point, there exists a function $f_S\in M_S$ and a finite set $F_S$ such that $g\circ f_S(0)\in F_S$ for all $g\in M_S$; in fact, one can take $f_S=x^2-2$ (for either sets) and $F_S=\{\pm{1},\pm{2}\}$ and $F_S=\{\pm{2}\}$ for $S=\{x^2-2,x^2-3\}$ and $S=\{x^2-2,x^2-6\}$ respectively. With this in mind, consider the set of sequences 
\[N_S:=\big\{\gamma=(\theta_n)_{n\geq1}\in \Phi_S\,:\, \theta_n=x^2-2\;\; \text{i.o.}\big\},\]
whose $n$-th term is $x^2-2$ infinitely often (or with future work in mind, where $\gamma_n=\theta_1\circ\dots \theta_m\circ f_S$ for some $m$ for infinitely many $n$ - that is, the set of sequences $\gamma$ where the function $f_S$ is the tail of $\gamma_n$ infinitely many times). Then it follows from the Borel-Cantelli Theorem (specifically, the Monkey and Typewriter problem \cite[pp. 96-100]{ProbText}) that $\bar{\nu}(N_S)=1$. On the other hand, if $\gamma=(\theta_n)_{n\geq1}\in N_S$ and $\theta_n=x^2-2$, then $\gamma_n(0)\in F_S$. In particular, for each $\gamma\in N_S$ there is a fixed $a_\gamma\in F_S$ such that $\gamma_n(0)=a_\gamma$ for infinitely many $n$ by the Pigeonhole principle. Say $n_1, n_2\dots$ is an infinite (increasing) sequence such that $\gamma_{n_i}(0)=a_\gamma$. Next, we note for all $n\geq2$ the field $K_n(\gamma)$ contains a square root of $\gamma_n(0)$: certainly $K_n(\gamma)$ contains a square root of the discriminant of $\gamma_n$ (since splitting fields always contain a square root of their defining polynomial's discriminant) 
and the discriminant of $\gamma_n$ satisfies: 
\[\disc(\gamma_n)=\Res(\gamma_{n-1},\gamma_{n-1}')^2\cdot 2^{2^n}\cdot\gamma_n(0)\qquad \text{for $n\geq2$};\]
see the proof of the more general discriminant formula in \cite[Proposition 6.2]{Me:LeftRightTotal}. The key point here is that the $\pm{1}$ in \cite[Proposition 6.2]{Me:LeftRightTotal} is $(-1)^{2^{n-1}(2^n-1)}$, which is $+1$ as long as $n\geq2$. In particular, $\sqrt{\gamma_n(0)}\in K_n(\gamma)$ for $n\geq2$ as claimed. Therefore, with the setup above, $\sqrt{a_\gamma}\in K_{n_2}(\gamma)$. However, the fields $K_n(\gamma)$ are nested, and thus $\sqrt{a_\gamma}\in K_{n_2}(\gamma)\subseteq K_{n_i-1}(\gamma)$ for all $i\geq3$. But then Lemma \ref{lem:specificstable1}, Lemma \ref{lem:specificstable2}, and Proposition \ref{prop:maximality} imply that the subextensions $K_{n_i}(\gamma)/K_{n_i-1}(\gamma)$ are not maximal for all $i\geq3$. In particular, the index of $G_{\gamma,\mathbb{Q}}$ in $\Aut(T_\gamma)$ is infinite for all $\gamma\in{N_S}$. Therefore, $G_{\gamma,\mathbb{Q}}$ has infinite index in $\Aut(T_\gamma)$ with probability one as claimed.                      
\end{proof}   

\section{Arboreal Representations over $\mathbb{Z}[t]$}\label{sec:Z[t]}
We now turn our attention to arboreal representations attached to sequences generated by sets over $\mathbb{Z}[t]$. The main advantage in this setting is the abundance of square-free values (and the presence of derivatives and reduction to detect them). More specifically, the main idea is the following (with a few small assumptions): if $\gamma_n(0)$ is square-free, then $\gamma_n(0)$ must contain primitive prime divisors appearing to odd valuation; see Lemma \ref{lem:primdiv} below. In particular, if $\gamma$ is stable and $\gamma_n(0)$ is square-free, then $K_n(\gamma)/K_{n-1}(\gamma)$ is maximal by Theorem \ref{thm:GaloisMax}. However, stability is usually easy to ensure in this setting, and so the main problem becomes how to ensure that $\gamma_n(0)$ is square-free. With this in mind, we have the following convenient trick using derivatives and reduction mod $2$. In what follows, given $c\in\mathbb{Z}[t]$ let $\bar{c}$ denote the polynomial in $\mathbb{F}_2[t]$ obtained by reducing $c$'s coefficients mod $2$. Likewise, given any ring $R$ let $\frac{d}{dt}$ be the usual derivative on the polynomial ring $R[t]$.        

\begin{lemma}\label{lem:square-free} Let $z,c\in\mathbb{Z}[t]$ be such that $z^2+c$ has odd leading term. If $\frac{d}{dt}(\bar{c})=1$ in $\mathbb{F}_2[t]$, then $z^2+c$ is square-free in $\mathbb{Q}[t]$.     
\end{lemma}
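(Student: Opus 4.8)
The plan is to show that a nontrivial square factor of $z^2+c$ in $\mathbb{Q}[t]$ would force $\frac{d}{dt}(\bar c)=0$ in $\mathbb{F}_2[t]$, contradicting the hypothesis. First I would reduce to working over $\mathbb{Z}$: since $z,c\in\mathbb{Z}[t]$ and $z^2+c$ has odd (hence nonzero) leading coefficient, by Gauss's lemma $z^2+c$ is square-free in $\mathbb{Q}[t]$ if and only if it has no repeated factor as an element of $\mathbb{Z}[t]$, i.e. if and only if $\gcd(z^2+c,\,\frac{d}{dt}(z^2+c))$ is a constant in $\mathbb{Q}[t]$; equivalently, there is no irreducible $p\in\mathbb{Q}[t]$ dividing both $z^2+c$ and its derivative $2z\frac{dz}{dt}+\frac{dc}{dt}$. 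Suppose for contradiction that $w^2\mid z^2+c$ in $\mathbb{Q}[t]$ for some nonconstant $w$; then (after clearing denominators and using that the leading term is odd, so in particular the polynomial is primitive up to an odd constant) we get a genuine repeated irreducible factor, and hence a common root $\alpha\in\overline{\mathbb{Q}}$ of $z^2+c$ and $2z\,z'+c'$ of multiplicity $\geq 2$.

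Next I would pass to reduction mod $2$. The key observation is that the content/leading-term hypothesis lets us reduce the whole configuration mod $2$ without collapsing: the leading term of $z^2+c$ is odd, so $\deg\overline{z^2+c}=\deg(z^2+c)$, and in $\mathbb{F}_2[t]$ we have $\overline{z^2+c}=\bar z^2+\bar c$. A repeated factor of $z^2+c$ in $\mathbb{Z}[t]$ reduces to a repeated factor (or worse) of $\bar z^2+\bar c$ in $\mathbb{F}_2[t]$, so $\bar z^2+\bar c$ is \emph{not} square-free in $\mathbb{F}_2[t]$. But now compute the derivative in characteristic $2$: $\frac{d}{dt}(\bar z^2+\bar c)=2\bar z\frac{d\bar z}{dt}+\frac{d\bar c}{dt}=\frac{d\bar c}{dt}$, since $2=0$ in $\mathbb{F}_2$. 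A polynomial $g\in\mathbb{F}_2[t]$ is square-free if and only if $\gcd(g,g')=1$; since $\bar z^2+\bar c$ is not square-free, $g'=\frac{d\bar c}{dt}$ must share a nonconstant factor with $\bar z^2+\bar c$, and in particular $\frac{d\bar c}{dt}$ must be divisible by the (nonconstant) repeated irreducible factor. In any case $\frac{d\bar c}{dt}\ne 1$: it is either $0$ or a polynomial with a nonconstant irreducible divisor, neither of which equals the constant $1$. This contradicts the hypothesis $\frac{d}{dt}(\bar c)=1$ in $\mathbb{F}_2[t]$.

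The main technical care is in the first step: making sure the reduction-mod-$2$ map does not destroy the square factor, which is exactly what the ``odd leading term'' hypothesis guarantees (it keeps the degree from dropping and keeps $z^2+c$ from becoming identically $0$ or a proper power of $2$ times something mod $2$). I would be slightly careful about the precise statement that a square factor over $\mathbb{Q}[t]$ yields a square factor over $\mathbb{Z}[t]$ and then over $\mathbb{F}_2[t]$: by Gauss's lemma one may take the factorization to be over $\mathbb{Z}[t]$ with primitive factors, and primitivity plus the odd leading coefficient ensures the relevant factors do not vanish mod $2$, so the multiplicity is preserved (or increased) upon reduction. Everything else — the characteristic-$2$ derivative computation and the $\gcd$ criterion for square-freeness in $\mathbb{F}_2[t]$ — is routine. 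So the real content is the reduction argument, and the punchline is the one-line identity $\frac{d}{dt}(\bar z^2+\bar c)=\frac{d\bar c}{dt}$ in characteristic $2$, which turns a square-freeness statement about $z^2+c$ into the simple statement that $\frac{d}{dt}(\bar c)$ is a unit.
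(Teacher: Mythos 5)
Your proof is correct and follows essentially the same strategy as the paper: reduce to $\mathbb{Z}[t]$ via Gauss's lemma, observe that the odd leading term forces any square factor to stay nonconstant mod $2$, and exploit the characteristic-$2$ identity $\frac{d}{dt}(\bar z^2+\bar c)=\frac{d\bar c}{dt}=1$ to get a contradiction. The only cosmetic difference is the final step: the paper applies the Leibniz rule to $\bar y^2\bar w$ and concludes $\bar y^2$ would have to be a unit, whereas you invoke the $\gcd(g,g')=1$ criterion for square-freeness over the perfect field $\mathbb{F}_2$; both are one-line closings of the same argument.
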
 
\begin{proof} It suffices to show that $z^2+c$ is square-free in $\mathbb{Z}[t]$ (meaning it has no non-constant square factor) to show it's square-free in $\mathbb{Q}[t]$ by Gauss' Lemma. Suppose for a contradiction that $z^2+c=y^2\cdot w$ for some non-constant $y\in\mathbb{Z}[t]$ and some $w\in\mathbb{Z}[t]$. Note that $y$ must have odd leading term since $z^2+c$ has odd leading term. In particular, the mod $2$ reduction $\bar{y}\in\mathbb{F}_2[t]$ of $y$ must be non-constant. Now we take the expression $z^2+c=y^2\cdot w$, reduce it mod $2$, and take the derivative of both sides in $\mathbb{F}_2[t]$:
\[1=\frac{d}{dt}(\bar{c})=\frac{d}{dt}(\bar{z}^2+\bar{c})=\frac{d}{dt}(\bar{y}^2\cdot\bar{w}\,)=\bar{y}^2\cdot \frac{d}{dt}(\bar{w}).\]
Hence, $\bar{y}^2$ is a unit $\mathbb{F}_2[t]$ and is therefore constant. But this contradicts the previously established fact that $\bar{y}\in\mathbb{F}_2[t]$ is non-constant.  \vspace{.05cm}         
\end{proof}

Next, we have the following elementary bounds for the heights (i.e., degrees) of the points $\gamma_n(0)$ in the critical orbits of sequences in $S$.   \vspace{.05cm}      

\begin{lemma}\label{lem:degree} Let $S=\{x^2+c_1,\dots, x^2+c_s\}$ for some $c_i\in\mathbb{Z}[t]$, let $\gamma=(\theta_n)_{n\geq1}\in\Phi_S$, and assume that $d=\max\{\deg(c_1),\dots,\deg(c_s)\}>0$. Then the following statements hold: \vspace{.2cm}  
\begin{enumerate} 
\item[\textup{(1)}] $\deg(\gamma_n(0))\leq d\cdot 2^{n-1}$ for all $n$.  \vspace{.2cm} 
\item[\textup{(2)}] If $\deg(\theta_n(0))=d$, then $\deg(\gamma_n(0))=d\cdot 2^{n-1}$ and the leading term of $\gamma_n(0)$ is a power of the leading term of $\theta_n(0)$. \vspace{.1cm} 
\end{enumerate}    
\end{lemma}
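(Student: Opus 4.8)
The plan is to prove both parts of Lemma~\ref{lem:degree} by induction on $n$, tracking the degree of $\gamma_n(0)$ and, when the hypothesis of part (2) holds, the leading term as well. The recursive identity to exploit is $\gamma_n(0) = \gamma_{n-1}(\theta_n(0))$, which is \emph{not} quite $\theta_1\circ\cdots\circ\theta_n$ evaluated at $0$ in the most convenient order; more precisely I would use $\gamma_n = \theta_1 \circ \gamma'_{n}$ where $\gamma'_n = \theta_2 \circ \cdots \circ \theta_n$, so that $\gamma_n(0) = \theta_1(\gamma'_n(0)) = \gamma'_n(0)^2 + c_{i_1}$. Wait --- this peels off the \emph{first} map, whereas the statement's hypothesis concerns $\theta_n$. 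So instead I would write $\gamma_n(0) = \gamma_{n-1}(\theta_n(0))$, which expresses the level-$n$ critical orbit point as the level-$(n-1)$ polynomial of the sequence $(\theta_1,\dots,\theta_{n-1})$ evaluated at the point $\theta_n(0) = c_{i_n} \in \mathbb{Z}[t]$. This is the identity I will iterate.

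For part (1): the base case $n=1$ gives $\gamma_1(0) = c_{i_1}$, so $\deg(\gamma_1(0)) \le d = d\cdot 2^0$. For the inductive step, I would argue that $\gamma_{n}$, being a composition of $n-1$ quadratics with an extra evaluation, satisfies $\deg(\gamma_{n-1}(P)) \le 2^{n-1}\deg(P)$ for any polynomial $P$ (since each quadratic doubles the degree), hence $\deg(\gamma_n(0)) = \deg(\gamma_{n-1}(\theta_n(0))) \le 2^{n-1}\deg(\theta_n(0)) \le 2^{n-1} d$, using $\deg(\theta_n(0)) = \deg(c_{i_n}) \le d$. Here I should be a little careful: $\gamma_{n-1}$ as a polynomial in $x$ has degree $2^{n-1}$, and composing with a polynomial $P$ of degree $e$ gives degree exactly $2^{n-1} e$ provided no leading-coefficient cancellation occurs; for the \emph{upper} bound no such issue arises, so part (1) is clean.

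For part (2): assume $\deg(\theta_n(0)) = \deg(c_{i_n}) = d$, and let $at^d$ be the leading term of $c_{i_n}$ (so $a \neq 0$). Then $\gamma_n(0) = \gamma_{n-1}(c_{i_n})$, and since $\gamma_{n-1}(x)$ is a monic polynomial in $x$ of degree $2^{n-1}$ (it is a composition of monic quadratics $x^2 + c_j$, hence monic of degree $2^{n-1}$), its leading term in $x$ is $x^{2^{n-1}}$. Substituting $x = c_{i_n}$, the top-degree contribution comes solely from $(c_{i_n})^{2^{n-1}}$, whose leading term is $(at^d)^{2^{n-1}} = a^{2^{n-1}} t^{d\cdot 2^{n-1}}$; all lower-order terms of $\gamma_{n-1}(x)$ contribute degree at most $(2^{n-1}-1)d < d\cdot 2^{n-1}$ after substitution, so there is no cancellation. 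Hence $\deg(\gamma_n(0)) = d\cdot 2^{n-1}$ and its leading term is $(at^d)^{2^{n-1}}$, a power of the leading term of $\theta_n(0) = c_{i_n}$, as claimed.

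The main subtlety --- really the only thing requiring care --- is the bookkeeping that $\gamma_{n-1}$ is \emph{monic} in the variable $x$: this is what rules out leading-term cancellation in part (2) and makes the "leading term is a power of the leading term of $\theta_n(0)$" assertion come out exactly. I would state this monicity explicitly as a preliminary observation (each $x^2 + c_j$ is monic, and a composition of monic polynomials is monic), since the whole argument for part (2) hinges on it. Everything else is the routine degree-doubling estimate for compositions of quadratics.
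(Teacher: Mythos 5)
Your overall strategy is the same as the paper's --- peel off the innermost map and induct --- but there is a genuine gap in how you control the degree in $t$. You treat $\gamma_{n-1}(x)$ as though it were a polynomial with \emph{constant} coefficients, when in fact its coefficients lie in $\mathbb{Z}[t]$ and can themselves have large degree in $t$. This breaks both of your key estimates.

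In part (1), the claimed inequality $\deg(\gamma_{n-1}(P)) \le 2^{n-1}\deg(P)$ is false. Take $\gamma_1(x) = x^2 + c_1$ with $\deg(c_1) = d > 0$ and $P = 1$; then $\gamma_1(P) = 1 + c_1$ has degree $d$, while $2\deg(P) = 0$. This is not a corner case you can wave away: you invoke the inequality with $P = \theta_n(0)$, and the hypothesis of part (1) permits $\deg(\theta_n(0))$ to be strictly less than $d$ (even $0$). In part (2), the assertion that ``all lower-order terms of $\gamma_{n-1}(x)$ contribute degree at most $(2^{n-1}-1)d$ after substitution'' has the same defect: the term $a_k x^k$ becomes $a_k\, c_{i_n}^k$, whose degree is $\deg_t(a_k) + k d$, not $k d$, and you give no bound on $\deg_t(a_k)$. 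The statement you need --- roughly $\deg_t(a_k) + kd < 2^{n-1} d$ for $k < 2^{n-1}$ --- is in fact true, but it is a nontrivial claim about the coefficients of a composition and requires its own induction; it cannot be asserted for free.

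The paper avoids this entirely by never expanding $\gamma_{n-1}$ into its coefficients: it sets $z_0 = \theta_n(0)$ and iterates $z_m = z_{m-1}^2 + b_{n-m}$, so at each step the only estimate needed is the elementary $\deg(z_m) \le \max\{2\deg(z_{m-1}),\, \deg(b_{n-m})\}$, and in the equality case $\deg(z_{m-1}) = d\cdot 2^{m-1} > d \ge \deg(b_{n-m})$, so the leading term of $z_m$ is the square of that of $z_{m-1}$ with no possibility of cancellation. Your proposal is salvageable if you replace the black-box composition bound with this one-step-at-a-time recursion (which is, up to notation, the same identity $\gamma_n(0) = \gamma_{n-1}(\theta_n(0))$ you wrote down), or if you separately prove the coefficient-degree bound on the $a_k$; as written, though, both parts rest on an inequality that fails.
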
 
\begin{proof} Write $\theta_i(x)=x^2+b_i\in S$ for each $1\leq i\leq n$. Then \vspace{.1cm}
\[\gamma_n(0)=\theta_1(\theta_2(\dots(\theta_{n-1}(\theta_n(0)))))=(((b_n^2+b_{n-1})^2+b_{n-2})^2+\dots b_2)^2+b_1. \vspace{.1cm} \]
Now set $z_0=b_n$ and define 
\begin{equation}\label{sq}
z_{m}=\theta_{n-m}(z_{m-1})=z_{m-1}^2+b_{n-m}
\end{equation} 
recursively for $1\leq m\leq n-1$. Note in particular that $\gamma_n(0)=z_{n-1}$ and it suffices to prove the claim below to prove Lemma \ref{lem:degree}: \\[5pt] 
\textbf{Claim:} $\deg(z_m)\leq d\cdot 2^{m}$ with equality if $\deg(z_0)=d$. Moreover when $\deg(z_0)=d$ and $m\geq1$, the leading term of $z_m$ is the square of the leading term of $z_{m-1}$. We prove this by induction on $m$. The base case $m=0$ is obvious. On the other hand, if $m\geq1$ and the claim holds for $m-1$, then \eqref{sq} implies that 
\[\deg(z_m)\leq\max\{2\deg(z_{m-1}),b_{n-m}\}\leq\{2\cdot d\cdot 2^{m-1},d\}=\max\{d\cdot 2^m,d\}=d\cdot 2^m\]
as desired. Moreover, if $\deg(z_0)=d$ then $\deg(z_{m-1})=d\cdot 2^{m-1}$ by induction. Furthermore, since $2^m\cdot d>d\geq\deg(b_{n-m})$, it follows from \eqref{sq} that $\deg(z_m)=d\cdot 2^{m}$. 
\vspace{.05cm}             
\end{proof}

From here, we combine the previous two lemmas and give a nontrivial criterion for ensuring that the polynomials $\gamma_n(0)$ in the critical orbits of sequences in $S$ are square-free in $\mathbb{Q}[t]$.  \vspace{.05cm}      

\begin{lemma}\label{lem:iteratessquare-free} Let $S=\{x^2+c_1,\dots, x^2+c_s\}$ for some $c_i\in\mathbb{Z}[t]$, let $\gamma=(\theta_n)_{n\geq1}\in\Phi_S$, and let $d=\max\{\deg(c_1),\dots,\deg(c_s)\}>0$. Moreover, assume $\theta_1$ satisfies $\frac{d}{dt}(\overline{\theta_1(0)})=1$ in $\mathbb{F}_2[t]$. Then the following statements hold: \vspace{.15cm}
\begin{enumerate} 
\item[\textup{(1)}] $\pm{\gamma_n(0)}$ is not a square in $\mathbb{Q}[t]$ for all $n\geq1$.  \vspace{.2cm}  
\item[\textup{(2)}] If $\deg(\theta_n(0)))=d$ and $\theta_n(0)$ has odd leading term, then $\gamma_n(0)$ is square-free in $\mathbb{Q}[t]$.   
\end{enumerate}      
\end{lemma}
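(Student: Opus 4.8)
The plan is to reduce both claims to Lemma \ref{lem:square-free} by writing each $\gamma_n(0)$ as $z^2+c$ with suitably controlled data. Recall from the proof of Lemma \ref{lem:degree} that if we set $z_0 = \theta_n(0)$ and recursively $z_m = z_{m-1}^2 + b_{n-m}$ (where $\theta_i(x) = x^2+b_i$), then $\gamma_n(0) = z_{n-1}$. More importantly, $\gamma_n(0) = (\theta_1\circ\cdots\circ\theta_{n-1})(\theta_n(0))$, so peeling off the \emph{outermost} map gives $\gamma_n(0) = w^2 + c_1'$ where $w = (\theta_2\circ\cdots\circ\theta_n)(0)$ and $c_1' = \theta_1(0)$. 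This is the decomposition I want to feed into Lemma \ref{lem:square-free}: the ``$c$'' there is $\theta_1(0)$, and by hypothesis $\frac{d}{dt}(\overline{\theta_1(0)}) = 1$ in $\mathbb{F}_2[t]$.

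For part (2): assume $\deg(\theta_n(0)) = d$ and $\theta_n(0)$ has odd leading term. By Lemma \ref{lem:degree}(2) applied to the sequence $(\theta_2,\dots,\theta_n)$ (whose ``$n$'' is $n-1$), the point $w = (\theta_2\circ\cdots\circ\theta_n)(0)$ has degree $d\cdot 2^{n-2}$ with leading term a power of the (odd) leading term of $\theta_n(0)$; in particular $w$ has odd leading term, so $w^2$ has odd leading term. Since $\deg(w^2) = d\cdot 2^{n-1} > d \geq \deg(\theta_1(0))$, the polynomial $\gamma_n(0) = w^2 + \theta_1(0)$ also has odd leading term. Now Lemma \ref{lem:square-free}, with $z = w$ and $c = \theta_1(0)$, gives that $\gamma_n(0)$ is square-free in $\mathbb{Q}[t]$. (For $n=1$ the statement is just that $\theta_1(0) = c_1'$ is square-free, which follows directly from Lemma \ref{lem:square-free} with $z = 0$, since $\theta_1(0)$ has odd leading term by the $d=\deg$ hypothesis and derivative condition — one checks a nonzero-derivative polynomial over $\mathbb{F}_2$ cannot be a square.)

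For part (1): we want $\pm\gamma_n(0)$ to not be a square in $\mathbb{Q}[t]$ for \emph{all} $n$, without the degree/leading-term hypothesis on $\theta_n(0)$. Here the slicker route is to work purely mod $2$ and use derivatives, rather than going through square-freeness of the full polynomial. Write $\gamma_n(0) = w^2 + \theta_1(0)$ as above. Reducing mod $2$ and differentiating in $\mathbb{F}_2[t]$,
\[
\frac{d}{dt}\big(\overline{\gamma_n(0)}\big) = \frac{d}{dt}\big(\bar w^2 + \overline{\theta_1(0)}\big) = \frac{d}{dt}\big(\overline{\theta_1(0)}\big) = 1 \neq 0,
\]
since the derivative of a square in $\mathbb{F}_2[t]$ is zero. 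If $\gamma_n(0) = \pm g^2$ for some $g \in \mathbb{Q}[t]$, then by Gauss's lemma we may take $g \in \mathbb{Z}[t]$ (up to a rational square factor, which we absorb), so $\overline{\gamma_n(0)} = \bar g^2$ in $\mathbb{F}_2[t]$ (note $\pm 1 \equiv 1 \bmod 2$), whence $\frac{d}{dt}(\overline{\gamma_n(0)}) = 0$, a contradiction. One small wrinkle to address cleanly: if $\gamma_n(0) = 0$ then it is trivially a square, so we should observe that $\gamma_n(0) \neq 0$ — indeed $\overline{\gamma_n(0)}$ has nonzero derivative hence is a nonzero element of $\mathbb{F}_2[t]$, so $\gamma_n(0) \neq 0$ and also $\gamma_n(0)$ is not a constant.

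The main obstacle — really the only place requiring care — is the bookkeeping in part (2): correctly tracking that the leading-term parity is preserved when we peel off the outer map and invoke Lemma \ref{lem:degree}(2) for the truncated sequence, and verifying the degree strict inequality $\deg(w^2) > \deg(\theta_1(0))$ that licenses the ``leading term of $w^2 + \theta_1(0)$ equals leading term of $w^2$'' step. Part (1), by contrast, is essentially immediate from the ``differentiate mod $2$'' trick once the $\pm 1 \equiv 1 \bmod 2$ and $\gamma_n(0) \neq 0$ points are noted.
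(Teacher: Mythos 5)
Your proposal is correct and follows essentially the same route as the paper: write $\gamma_n(0)=z^2+\theta_1(0)$, reduce mod $2$ and differentiate for part~(1), and combine Lemma~\ref{lem:degree}(2) with Lemma~\ref{lem:square-free} for part~(2). Incidentally, you identify the inner polynomial $z$ correctly as $(\theta_2\circ\cdots\circ\theta_n)(0)$, whereas the paper's proof writes $z=\gamma_{n-1}(0)$ — a small slip that does not affect the argument, since only the shape $z^2+\theta_1(0)$ (not the exact identity of $z$) is used; your route through $w$ is a slightly longer but equivalent way to see $\gamma_n(0)$ has odd leading term, compared with the paper's direct appeal to Lemma~\ref{lem:degree}(2) on the full sequence.
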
  
\begin{proof} Let $\theta_1(0)=c$ and write $\gamma_n(0)=z^2+c$ where $z=0$ if $n=1$ and $z=\gamma_{n-1}(0)$ if $n\geq2$. For statement (1), suppose $\pm{\gamma_n(0)}$ is a square in $\mathbb{Q}[t]$. Then it follows from Gauss' Lemma that $\pm{\gamma_n(0)}=a\cdot y^2$ for some $a\in\mathbb{Z}$ and some $y\in\mathbb{Z}[t]$. But then 
\[1=\frac{d}{dt}(\bar{z}^2+\bar{c})=\frac{d}{dt}(\overline{\gamma_n(0)})=\frac{d}{dt}(\overline{\pm\gamma_n(0)})=\frac{d}{dt}(\overline{a\cdot y^2})=a\frac{d}{dt}(\bar{y}^2)=0,\]
a contradiction. Therefore, $\pm{\gamma_n(0)}$ is not a square in $\mathbb{Q}[t]$ for all $n\geq1$.   
 
As for the second statement, note that Lemma \ref{lem:degree} implies that the leading term of $\gamma_n(0)$ is odd: it's a power of the odd leading term of $\theta_n(0)$. Hence, Lemma \ref{lem:square-free} applied to $\gamma_n(0)=z^2+c$ implies that $\gamma_n(0)$ is square-free in $\mathbb{Q}[t]$ as claimed. \vspace{.1cm}        
\end{proof}

Next, with Lemma \ref{lem:iteratessquare-free} in mind, we show that $\gamma_n(0)$ has a primitive prime divisor appearing to odd valuation whenever $\gamma_n(0)$ is square-free (subject also to a basic degree condition). \vspace{.1cm}  

\begin{lemma}\label{lem:primdiv}  Let $S=\{x^2+c_1,\dots, x^2+c_s\}$ for some $c_i\in\mathbb{Z}[t]$, let $\gamma=(\theta_n)_{n\geq1}\in \Phi_S$, let $n\geq 2$, and assume the following conditions hold: \vspace{.05cm}
\begin{enumerate} 
\item[\textup{(1)}] $\deg(\theta_n(0))=\max\{\deg(c_1), \dots, \deg(c_s)\}=d>0$, \vspace{.15cm}  
\item[\textup{(2)}] $\gamma_n(0)$ is square-free in $\mathbb{Q}[t]$, 
\end{enumerate} 
Then there exists an irreducible polynomial $p\in\mathbb{Q}[t]$ such that $v_p(\gamma_n(0))=1$ and $v_p(\gamma_m(0))=0$ for all $1\leq m<n$.     
\end{lemma}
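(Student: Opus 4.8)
The plan is to run a standard Zsygmondy‑type primitive‑prime‑divisor argument and then read the valuations off from square‑freeness. First I would record two preliminary observations. By hypothesis~(1) and Lemma~\ref{lem:degree}(2), $\deg\gamma_n(0)=d\cdot 2^{n-1}>0$, so $\gamma_n(0)$ is non‑constant; being square‑free in $\mathbb{Q}[t]$, it is (up to a nonzero rational constant) a product of pairwise non‑associate irreducibles, and \emph{every} irreducible factor $p$ of it satisfies $v_p(\gamma_n(0))=1$. Second, I would check that $\gamma_m(0)\neq 0$ for every $1\le m\le n$: the point $0$ is a critical point of $\gamma_m=\theta_1\circ\cdots\circ\theta_m$ (the chain rule produces the factor $\theta_m'(0)=0$), so $X^2\mid\gamma_m(X)-\gamma_m(0)$; if $\gamma_m(0)=0$ for some $m<n$, then writing $\gamma_n=\gamma_m\circ\psi$ with $\psi=\theta_{m+1}\circ\cdots\circ\theta_n$ we would get $\psi(0)^2\mid\gamma_n(0)$, and $\psi(0)$ is non‑constant by Lemma~\ref{lem:degree}(2), contradicting square‑freeness of $\gamma_n(0)$.

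The main step is a proof by contradiction. Suppose there is \emph{no} irreducible $p\in\mathbb{Q}[t]$ dividing $\gamma_n(0)$ but none of $\gamma_1(0),\dots,\gamma_{n-1}(0)$. Then every irreducible factor of $\gamma_n(0)$ divides the nonzero product $\prod_{m=1}^{n-1}\gamma_m(0)$; since $\gamma_n(0)$ is square‑free, this forces $\gamma_n(0)\mid\prod_{m=1}^{n-1}\gamma_m(0)$ in $\mathbb{Q}[t]$, hence $\deg\gamma_n(0)\le\sum_{m=1}^{n-1}\deg\gamma_m(0)$.

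Now I would substitute the degree bounds from Lemma~\ref{lem:degree}: the left side equals $d\cdot 2^{n-1}$ (part (2), using hypothesis~(1)), while each summand on the right is at most $d\cdot 2^{m-1}$ (part (1)), so that $d\cdot 2^{n-1}\le\sum_{m=1}^{n-1}d\cdot 2^{m-1}=d(2^{n-1}-1)$, i.e.\ $d\le 0$, contradicting $d>0$. Therefore a primitive prime divisor $p$ exists, and by the first paragraph it satisfies $v_p(\gamma_n(0))=1$ and $v_p(\gamma_m(0))=0$ for all $1\le m<n$, which is the claim.

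I do not expect a genuine obstacle here: this is the familiar degree‑counting argument (cf.\ \cite[Theorem 3.3]{Jones}). The only points needing a bit of care are the passage from ``every irreducible factor of $\gamma_n(0)$ divides the product'' to ``$\gamma_n(0)$ divides the product'', which is exactly where square‑freeness enters; the strict inequality $d(2^{n-1}-1)<d\cdot 2^{n-1}$, which is where the hypothesis $d>0$ is essential; and the preliminary verification that none of the $\gamma_m(0)$ vanish, so that the degree comparison is legitimate.
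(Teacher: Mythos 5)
Your proof is correct and follows essentially the same degree-counting contradiction as the paper: negate the conclusion, use square-freeness to conclude $\gamma_n(0)\mid\gamma_1(0)\cdots\gamma_{n-1}(0)$, and then contradict via Lemma~\ref{lem:degree}. The one place you are more careful than the paper is your preliminary verification that $\gamma_m(0)\neq 0$ for $1\le m<n$ (using that $0$ is a critical point of $\gamma_m$, so $\gamma_m(0)=0$ would force a square factor $\psi(0)^2$ in $\gamma_n(0)$); the paper's argument implicitly assumes the product $\gamma_1(0)\cdots\gamma_{n-1}(0)$ is nonzero, and your observation is exactly what justifies that step.
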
 
\begin{proof} Since $\gamma_n(0)$ is square-free, $\gamma_n(0)=p_1\cdots p_t$ for some coprime, irreducible polynomials $p_i\in\mathbb{Q}[t]$. In particular, if the conclusion of Lemma \ref{lem:primdiv} is false, then it must be the case that each $p_i\big\vert\gamma_{r_i}(0)$ for some $1\leq r_i\leq n-1$. Therefore, $\gamma_n(0)\big\vert \gamma_1(0)\cdots \gamma_{n-1}(0)$.
However, it then follows from Lemma \ref{lem:degree} that $d\cdot2^{n-1}\leq d\cdot(2^{n-1}-1)$, a contradiction.  
\end{proof}

Finally, putting Lemmas \ref{lem:iteratessquare-free} and \ref{lem:primdiv} together with Theorem \ref{thm:GaloisMax}, we obtain the maximality criterion for sets from the Introduction (which we restate for convenience).  
\thmtool*
\begin{proof} Suppose $\gamma=(\theta_n)_{n\geq1}\in\Phi_S$ satisfies $\theta_1=\phi_j$ and that a particular index $n$ satisfies $\theta_n=\phi_k$. Then first, since $\theta_1(0)=c_j$ and $\frac{d}{dt}(\overline{c_j})=1$ in $\mathbb{F}_2[t]$, Lemma \ref{lem:iteratessquare-free} part 1 implies that $\pm{\gamma_m(0)}$ is not a square in $\mathbb{Q}[t]$ for all $m\geq1$. In particular, $\gamma$ is stable over $\mathbb{Q}(t)$ by Proposition \ref{prop:stability}. Here we use also that if $a\in\mathbb{Q}[t]$ is a square in $\mathbb{Q}(t)$, then it is a square in $\mathbb{Q}[t]$; this follows from the fact that $\mathbb{Q}[t]$ is integrally closed in $\mathbb{Q}(t)$. 

As for the claim about maximality, note that Lemma \ref{lem:iteratessquare-free} part (2) (and the assumed conditions in the theorem) imply that $\gamma_n(0)$ is square-free in $\mathbb{Q}[t]$. In particular, it follows from Lemma \ref{lem:primdiv} that there exists an irreducible polynomial $p\in\mathbb{Q}[t]$ such that $v_p(\gamma_n(0))=1$ and $v_p(\gamma_m(0))=0$ for all $1\leq m<n$ (normally called a primitive prime divisor of $\gamma_n(0)$ appearing to odd valuation). Finally, Theorem \ref{thm:GaloisMax} implies that $K_n(\gamma)/K_{n-1}(\gamma)$ is maximal; here we use that the places of $K=\mathbb{Q}(t)$ correspond to irreducible polynomials in $\mathbb{Q}[t]$ with the usual valuations. 
\end{proof} 

In particular, Theorem \ref{thm:tool} gives us a way to produce sets $S$ with a positive proportion of sequences yielding maximal subextensions infinitely often. \vspace{.05cm}   

\begin{corollary}\label{cor:i.o.} Let $S=\{x^2+c_1,\dots, x^2+c_s\}$ for some polynomials $c_i\in \mathbb{Z}[t]$, let $\nu$ be a strictly positive probability measure on $S$, and let $\bar{\nu}=\nu^{\mathbb{N}}$ be the product measure on $\Phi_S$. Moreover, assume that the following conditions hold:\vspace{.1cm}
\begin{enumerate} 
\item[\textup{(1)}] Some $c_j$ satisfies $\frac{d}{dt}(\overline{c_j})=1$ in $\mathbb{F}_2[t]$. \vspace{.1cm} 
\item[\textup{(2)}] Some $c_k$ satisfies $\deg(c_k)=\max\{\deg(c_1), \dots\deg(c_s)\}$ and $c_k$ has odd leading term. \vspace{.1cm} 
\end{enumerate} 
Then the set of $\mathbb{Q}(t)$-stable sequences $\gamma\in\Phi_S$ defining maximal subextensions $K_n(\gamma)/K_{n-1}(\gamma)$ i.o. has positive measure: 
\[\bar{\nu}\Big(\Big\{\gamma\in\Phi_S\,:\,\text{$\gamma$ is stable over $\mathbb{Q}(t)$ and $K_n(\gamma)/K_{n-1}(\gamma)$ is maximal i.o.}\Big\}\Big)>0.\]
\end{corollary}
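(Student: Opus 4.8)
The plan is to deduce this directly from Theorem~\ref{thm:tool} together with a Borel--Cantelli argument. Fix indices $j$ and $k$ so that $c_j$ satisfies condition~(1) (that is, $\tfrac{d}{dt}(\overline{c_j})=1$ in $\mathbb{F}_2[t]$) and $c_k$ satisfies condition~(2) (maximal degree among the $c_i$ and odd leading term), and write $\phi_j=x^2+c_j$ and $\phi_k=x^2+c_k$ for the corresponding elements of $S$. I would then consider the event
\[
E:=\bigl\{\gamma=(\theta_n)_{n\ge1}\in\Phi_S\,:\,\theta_1=\phi_j\ \text{and}\ \theta_n=\phi_k\ \text{for infinitely many}\ n\ge2\bigr\}.
\]
For every $\gamma\in E$ and every index $n\ge2$ with $\theta_n=\phi_k$ we have $\theta_1(0)=c_j$ and $\theta_n(0)=c_k$, so Theorem~\ref{thm:tool} tells us that $\gamma$ is $\mathbb{Q}(t)$-stable and that $K_n(\gamma)/K_{n-1}(\gamma)$ is maximal; since there are infinitely many such $n$, each $\gamma\in E$ lies in the set on the left-hand side of the corollary. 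It therefore suffices to prove $\bar\nu(E)>0$.

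Next I would carry out the measure-theoretic step. Since $\bar\nu=\nu^{\mathbb{N}}$ is a product measure, the coordinate maps $\gamma\mapsto\theta_n$ are independent. The event $\{\theta_1=\phi_j\}$ depends only on the first coordinate and has measure $\nu(\phi_j)>0$ because $\nu$ is strictly positive. For $n\ge2$ the events $A_n:=\{\theta_n=\phi_k\}$ are independent with $\bar\nu(A_n)=\nu(\phi_k)>0$, so $\sum_{n\ge2}\bar\nu(A_n)=\infty$; by the divergence half of the Borel--Cantelli lemma, $\bar\nu\bigl(\limsup_{n\ge2}A_n\bigr)=1$, i.e.\ almost every sequence has $\theta_n=\phi_k$ for infinitely many $n\ge2$. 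Finally, $\{\theta_1=\phi_j\}$ involves only the first coordinate while $\limsup_{n\ge2}A_n$ involves only coordinates $\ge2$, so the two events are independent and
\[
\bar\nu(E)=\nu(\phi_j)\cdot\bar\nu\Bigl(\limsup_{n\ge2}A_n\Bigr)=\nu(\phi_j)>0,
\]
which completes the argument.

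I do not expect a serious obstacle: the substantive input is Theorem~\ref{thm:tool}, and what remains is the routine fact that under a strictly positive product measure a prescribed ``first symbol $\phi_j$, symbol $\phi_k$ recurring infinitely often'' pattern has positive probability. The only points meriting a word of care are that the argument is unchanged when $j=k$ (the same polynomial of $S$ can play both roles, and Theorem~\ref{thm:tool} still applies with any index $n\ge2$ for which $\theta_n=\phi_j$), and that we claim only infinitely many maximal subextensions, so the level-$1$ extension and the indices $n$ with $\theta_n\ne\phi_k$ are irrelevant.
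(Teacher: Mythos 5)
Your proposal is correct and follows essentially the same route as the paper: reduce to Theorem~\ref{thm:tool}, observe that $\{\theta_1=\phi_j\}$ has measure $\nu(\phi_j)>0$, use Borel--Cantelli to show the event $\{\theta_n=\phi_k \text{ i.o.}\}$ has full measure, and conclude. The only cosmetic difference is that you invoke independence of the two events, whereas the paper simply uses that intersecting with a full-measure set preserves measure; both give the same conclusion.
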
 
\begin{proof}
Let $\phi_i=x^2+c_i$ for each $c_i$. Then by Theorem \ref{thm:tool}, it suffices to show that 
\[\mathcal{G}:=\Big\{\gamma=(\theta_n)_{n\geq1}\,:\,\text{$\theta_1=\phi_j$ and $\theta_n=\phi_k$ i.o.} \Big\}\]
has positive $\bar{\nu}$ measure to prove Corollary \ref{cor:i.o.}. However, note that $\mathcal{G}=
\mathcal{G}_1\cap\mathcal{G}_2$ where 
\[\mathcal{G}_1:=\Big\{\gamma=(\theta_n)_{n\geq1}\,:\,\text{$\theta_1=\phi_j$}\Big\}\;\;\;\;\text{and}\;\;\;\;\mathcal{G}_2:=\Big\{\gamma=(\theta_n)_{n\geq1}\,:\,\text{$\theta_n=\phi_k$ i.o.}\Big\}\] 
respectively. On the other hand, $\bar{\nu}(\mathcal{G}_1)=\nu(\phi_j)>0$ since $\bar{\nu}$ is the product measure of a countable number of copies of $\nu$ on $S$; see \cite[Theorem 10.4]{Jacod&Protter}. Moreover, since $\nu(\phi_k)>0$, the Borel-Cantelli Theorem (specifically, the Monkey and Typewriter problem \cite[pp. 96-100]{ProbText}) implies that $\bar{\nu}(\mathcal{G}_2)=1$. Therefore, 
\[\bar{\nu}(\mathcal{G})=\bar{\nu}(\mathcal{G}_1\cap\mathcal{G}_2)=\bar{\nu}(\mathcal{G}_1)=\nu(\phi_j)>0\] 
as desired.     
\end{proof}

Finally, we are ready to prove Theorem \ref{thm:nonmonic} from the Introduction (which we restate for convenience below). Intuitively, this result shows that if $S=\{x^2+c_1,\dots, x^2+c_s\}$ for some polynomials $c_i\in \mathbb{Z}[t]$ of fixed degree at most $d$, then as the cardinality of $S$ grows, most such sets produce big arboreal representations over $K=\mathbb{Q}(t)$ with positive probability; recall that big representations are those that are stable and also maximal infinitely often. \vspace{.05cm}  

\thmnonmonic*
\begin{remark}\label{rem:PdB} In what follows, we let \textbf{$x_B=(2B+1)$} to ease notation. Hence, $\#P_d(B)=x_B^{d+1}$ for all positive $B\in\mathbb{Z}$ since there are $2B+1$ integers $\leq B$ and there are $d+1$ possible terms in a polynomial of degree at most $d$. In particular, $\mathcal{S}(d,s,B)$=$\binom{x_B^{d+1}}{s}$ by definition of $\mathcal{S}(d,s,B)$.  
\end{remark} 
\begin{remark}\label{rem:binompoly} Recall that $\binom{X}{s}=\frac{X(X-1)\cdots(X-(s-1))}{s!}$. Hence if we fix $s$ and vary $X$, the binomial coefficient $\binom{X}{s}$ is a polynomial of degree $s$ in $X$ with leading coefficient $(s!)^{-1}$.   
\end{remark} 
\begin{proof} We proceed in cases depending on whether $d$ is even or odd. The key distinction is that in the even case we may choose a single polynomial satisfying conditions (1) and (2) of Theorem \ref{thm:tool}. This is not possible in the odd case, unless $d=1$.  \\[5pt] 
\textbf{Case(1):} Assume that $d$ is even. We begin by counting polynomials $c\in P_d(B)$ satisfying $\frac{d}{dt}(\bar{c})=1$ in $\mathbb{F}_2[t]$, $\deg(c)=d$, and having odd leading term (which we call property \eqref{deven}). This is clearly true of $c$ if and only if:
\begin{equation}\label{deven} 
c=\sum_{i=0}^d a_it^i,\;\,\text{$a_d$ is odd,}\; \text{$a_1$ is odd},\;\text{and}\;\text{$a_i$ is even for all odd indices $3\leq i\leq d-1$}.\tag{*} 
\end{equation} 
In particular, after counting even or odd integers with absolute value at most $B$ for each of the $d/2+1$ stipulated coefficients, we see that \vspace{.1cm}
\begin{equation}\label{deven:count1}
\#\{c\in P_d(B)\,:\, \text{$c$ satisfies property \eqref{deven}}\}=r_d\cdot x_B^{d+1}+O({x_B}^{d}). \vspace{.1cm} 
\end{equation}
Here we use that $r_d= (\frac{1}{2}\big)^{\frac{d}{2}+1}$ for even $d$. 
Hence, it follows from Remark \ref{rem:PdB} that there are $(1-r_d)\cdot {x_B}^{d+1}+O({x_B}^{d})$ polynomials in $P_d(B)$ that do not satisfy property \eqref{deven}. In particular, there are 
\[\binom{(1-r_d)\cdot x_B^{d+1}+O({x_B}^{d})}{s} \vspace{.1cm}\]
sets in $\mathcal{S}(d,s,B)$ whose elements \textbf{all} fail to satisfy property \eqref{deven}. Therefore, there are \vspace{.1cm} 
\begin{equation}\label{deven:count2}
\binom{x_B^{d+1}}{s}-\binom{(1-r_d)\cdot x_B^{d+1}+O(x_B^{d})}{s}=\frac{(1-(1-r_d)^s)}{s!}x_B^{(d+1)s}+O(x_B^{(d+1)s-1}) \vspace{.1cm}
\end{equation} 
sets in $\mathcal{S}(d,s,B)$ with at least one element satisfying property \eqref{deven}. Here we use also that the binomial coefficient $\binom{X}{s}$ is a polynomial of degree $s$ in $X$ with leading coefficient $(s!)^{-1}$; see Remark \ref{rem:binompoly}.  

On the other hand, Corollary \ref{cor:i.o.} implies that such sets $\{c_1,\dots,c_s\}\in \mathcal{S}(d,s,B)$ determine sets of quadratic polynomials $S=\{x^2+c_1,\dots,x^2+c_s\}$ which furnish big arboreal representations over $K=\mathbb{Q}(t)$ with positive probability. Hence, \eqref{deven:count2} and Corollary \ref{cor:i.o.} together imply that \vspace{.1cm}   
\[
\#\Big\{S\in\mathcal{S}(d,s,B)\,:\,\bar{\nu}_S\big(\BigArb(S,K)\big)>0\Big\}\geq \frac{(1-(1-r_d)^s)}{s!}x_B^{(d+1)s}+O(x_B^{(d+1)s-1}). \vspace{.1cm}
\]
In particular, after dividing the inequality above by \vspace{.1cm} 
\[\#S(d,s,B)=\binom{{x_B}^{d+1}}{s}=\frac{1}{s!}\cdot{x_B}^{(d+1)s}+O(x_B^{(d+1)s-1}) \vspace{.1cm} \] 
and letting $B\rightarrow\infty$, we see that \vspace{.1cm}  
\[\liminf_{B\rightarrow\infty}\frac{\#\Big\{S\in\mathcal{S}(d,s,B)\,:\,\bar{\nu}_S\big(\BigArb(S,K)\big)>0\Big\}}{\#\mathcal{S}(d,s,B)}\geq 1-(1-r_d)^s. \vspace{.1cm} \]
Therefore, statement (1) of Theorem \ref{thm:nonmonic} holds for even $d$ and fixed $s$ as claimed. \\[5pt] 
Before we begin the proof of the case when $d$ is odd, we need the following complementary form of the inclusion-exclusion principle.   
\begin{remark}\label{rem:inout} Let $\mathcal{A}$ and $\mathcal{B}$ be subsets of a finite set $\mathcal{X}$ with complements $\mathcal{A}^c$ and $\mathcal{B}^c$ in $\mathcal{X}$ respectively. Then $\#(\mathcal{A}\cap \mathcal{B})=\#\mathcal{X}-\#\mathcal{A}^c-\#\mathcal{B}^c+\#(\mathcal{A}^c\cap \mathcal{B}^c)$; this follows directly from De Morgan's Laws and the usual inclusion-exclusion principle.    
\end{remark} 
\noindent\textbf{Case(2):} Assume that $d$ is odd. We begin by counting polynomials $c\in P_d(B)$ satisfying $\frac{d}{dt}(\bar{c})=1$ in $\mathbb{F}_2[t]$ and call this property \eqref{dodd}. This is clearly true of $c$ if and only if: \vspace{.1cm} 
\begin{equation}\label{dodd} 
c=\sum_{i=0}^d a_it^i,\;\, \text{$a_1$ is odd,}\;\text{and}\;\text{$a_i$ is even for all odd indices $3\leq i\leq d$}.\tag{\RomanNumeralCaps{1}} \vspace{.1cm} 
\end{equation} 
In particular, after counting even or odd integers with absolute value at most $B$ for each of the $(d+1)/2$ stipulated coefficients, we see that \vspace{.1cm}
\begin{equation}\label{dodd:count1}
\#\{c\in P_d(B)\,:\, \text{$c$ satisfies property \eqref{dodd}}\}=r_d\cdot x_B^{d+1}+O(x_B^{d}). \vspace{.1cm} 
\end{equation}
Here we use that $r_d= (\frac{1}{2}\big)^{\frac{d+1}{2}}$ for odd $d$. On the other hand, we say that $c\in P_d(B)$ has property \eqref{dodd:lt} if it has degree $d$ and odd leading term: 
\begin{equation}\label{dodd:lt}
c=\sum_{i=0}^d a_it^d\;\,\text{and $a_d$ is odd.} \tag{\RomanNumeralCaps{2}}
\end{equation}  
Clearly properties \eqref{dodd} and \eqref{dodd:lt} are \textbf{disjoint events} (i.e., there are no polynomials satisfying both properties) by examining the leading coefficient $a_d$ alone, and
\begin{equation}\label{dodd:count2}
\#\{c\in P_d(B)\,:\, \text{$c$ satisfies property \eqref{dodd:lt}}\}=\frac{1}{2}\cdot x_B^{d+1}+O(x_B^{d})
\end{equation}  
polynomials have property \eqref{dodd:lt}. 

Now, with Corollary \ref{cor:i.o.} and big arboreal representations in mind, we count sets in $\mathcal{S}(d,s,B)$ with at least one element satisfying property \eqref{dodd} and at least one element satisfying property \eqref{dodd:lt}. Do do this, let $\mathcal{A}\subseteq \mathcal{S}(d,s,B)$ be the sets in $\mathcal{S}(d,s,B)$ with at least one element satisfying property \eqref{dodd}, and let $\mathcal{B}\subseteq \mathcal{S}(d,s,B)$ be the sets in $\mathcal{S}(d,s,B)$ with at least one element satisfying property \eqref{dodd:lt}; that is, we want to count $\#\mathcal{A}\cap \mathcal{B}$. To do this, note first that \eqref{dodd:count1} and Remark \ref{rem:PdB} imply that there are
\[\# P_d(B)-\Big(r_d\cdot x_B^{d+1}+O(x_B^{d})\Big)=(1-r_d)\cdot x_B^{d+1}+O(x_B^{d})\]
polynomials in $P_d(B)$ that do not have property \eqref{dodd}. Therefore, there are \vspace{.05cm}
\[\binom{(1-r_d)\cdot x_B^{d+1}+O(x_B^d)}{s} \vspace{.05cm}\] 
sets in $\mathcal{S}(d,s,B)$ whose elements \textbf{all} fail to satisfy property \eqref{dodd}. Equivalently, \vspace{.05cm}
\begin{equation}\label{eq:Acomp}
\#\mathcal{A}^c=\binom{(1-r_d)\cdot x_B^{d+1}+O(x_B^d)}{s}; \vspace{.05cm}
\end{equation}
here $\mathcal{A}^c$ denotes the complement of $\mathcal{A}$ in $\mathcal{S}(d,s,B)$. Likewise, \eqref{dodd:count2} and Remark \ref{rem:PdB} imply that there are \vspace{.05cm}
\[\# P_d(B)-\Big(\frac{1}{2}\cdot x_B^{d+1}+O(x_B^{d})\Big)=\frac{1}{2}\cdot x_B^{d+1}+O(x_B^{d}) \vspace{.05cm}\]
polynomials in $P_d(B)$ that do not have property \eqref{dodd:lt}. Therefore, there are \vspace{.05cm}
\[\binom{\frac{1}{2}\cdot x_B^{d+1}+O(x_B^d)}{s} \vspace{.05cm}\] 
sets in $\mathcal{S}(d,s,B)$ whose elements \textbf{all} fail to satisfy property \eqref{dodd:lt}; Equivalently,  \vspace{.05cm}
\begin{equation}\label{eq:Bcomp}
\#\mathcal{B}^c=\binom{\frac{1}{2}\cdot x_B^{d+1}+O(x_B^d)}{s}. \vspace{.05cm} 
\end{equation}
Finally, before we can put all of the pieces above together, it remains to count $\#(\mathcal{A}^c\cap \mathcal{B}^c)$. However, $\mathcal{A}^c\cap \mathcal{B}^c$ consists precisely of the sets in $\mathcal{S}(d,s,B)$ whose elements all fail to satisfy \eqref{dodd} and all fail to satisfy \eqref{dodd:lt}. But, since \eqref{dodd} and \eqref{dodd:lt} are disjoint events, there are exactly \vspace{.05cm}   
\[\# P_d(B)-\Big(r_d\cdot x_B^{d+1}+O(x_B^{d})\Big)-\Big(\frac{1}{2}\cdot x_B^{d+1}+O(x_B^{d})\Big)=\big(1-r_d-\frac{1}{2}\big)\cdot x_B^{d+1}+O(x_B^{d}) \vspace{.05cm}\]
polynomials that fail to satisfy \eqref{dodd} and fail to satisfy \eqref{dodd:lt}; 
here we use Remark \ref{rem:PdB}, \eqref{dodd:count1}, and \eqref{dodd:count2}. Therefore, we see that \vspace{.05cm}
\begin{equation}\label{eq:ABcomp}
\#(\mathcal{A}^c\cap \mathcal{B}^c)=\binom{(1-r_d-\frac{1}{2})\cdot x_B^{d+1}+O(x_B^d)}{s}. \vspace{.05cm}
\end{equation}   
In particular, \eqref{eq:Acomp}, \eqref{eq:Bcomp}, \eqref{eq:ABcomp}, and the complementary form of the inclusion-exclusion principle in Remark \ref{rem:inout} applied to $\mathcal{X}=\mathcal{S}(d,s,B)$, $\mathcal{A}$ and $\mathcal{B}$ above imply that there are \vspace{.15cm}  
\[
\Scale[1.3]{
\binom{x_B^{d+1}}{s}-\binom{(1-r_d)\cdot x_B^{d+1}+O(x_B^d)}{s}-\binom{\frac{1}{2}\cdot x_B^{d+1}+O(x_B^d)}{s}+\binom{(1-r_d-\frac{1}{2})\cdot x_B^{d+1}+O(x_B^d)}{s}} \vspace{.15cm} 
\]
sets in $\mathcal{S}(d,s,B)$ with at least one element satisfying property \eqref{dodd} and at least one element satisfying property \eqref{dodd:lt} (i.e., the expression above is $\#\mathcal{A}\cap\mathcal{B}$). Hence, there are \vspace{.15cm} 
\begin{equation}\label{dodd:big} 
\frac{1-(1-r_d)^s-(\frac{1}{2})^s+(1-r_d-\frac{1}{2})^s}{s!}x_B^{(d+1)s}+O(x_B^{(d+1)s-1}) \vspace{.155cm} 
\end{equation} 
sets in $\mathcal{S}(d,s,B)$ with at least one element satisfying property \eqref{dodd} and at least one element satisfying property \eqref{dodd:lt}; see also Remark \ref{rem:binompoly}. On the other hand, Corollary \ref{cor:i.o.} implies that such sets $\{c_1,\dots,c_s\}\in\mathcal{S}(d,s,B)$ determine sets of quadratic polynomials $S=\{x^2+c_1,\dots,x^2+c_s\}$ which furnish big arboreal representations over $K=\mathbb{Q}(t)$ with positive probability. Hence, \eqref{dodd:big} and Corollary \ref{cor:i.o.} together imply that \vspace{.15cm}   
\[
\Scale[.999]{
\#\Big\{S\in\mathcal{S}(d,s,B)\,:\,\bar{\nu}_S\big(\BigArb(S,K)\big)>0\Big\}\geq \frac{1-(1-r_d)^s-(\frac{1}{2})^s+(1-r_d-\frac{1}{2})^s}{s!}x_B^{(d+1)s}+O(x_B^{(d+1)s-1})} \vspace{.15cm}
\]
when $d$ is odd. In particular, after dividing the inequality above by \vspace{.1cm} 
\[\#S(d,s,B)=\binom{{x_B}^{d+1}}{s}=\frac{1}{s!}\cdot{x_B}^{(d+1)s}+O(x_B^{(d+1)s-1}) \vspace{.1cm} \] 
and letting $B\rightarrow\infty$, we see that \vspace{.15cm}
\[\displaystyle{\liminf_{B\rightarrow\infty}}\frac{\#\Big\{S\in \mathcal{S}(d,s,B)\,:\,\bar{\nu}_S\big(\BigArb(S,K)\big)>0\Big\}}{\#\mathcal{S}(d,s,B)}\geq 1-(1-r_{d})^s-\Big(\frac{1}{2}\Big)^s+\Big(1-r_d-\frac{1}{2}\Big)^s \vspace{.15cm}\]
for odd $d$ as claimed in part (2) of Theorem \ref{thm:nonmonic}.               
\end{proof}

On the other hand, if we restrict ourselves to monic polynomials of fixed even degree, then we can we can improve Theorem \ref{thm:nonmonic} and count sets producing surjective arboreal representations with positive probability, instead of just big arboreal representations with positive probability. With this in mind, we fix a bit more notation: let $M_d(B)$ be the set of monic, degree-$d$ polynomials in $\mathbb{Z}[t]$ whose coefficients all have absolute value at most $B$, and let    
\[\mathcal{S}^{\textup{Mon}}(d,s,B):\big\{\{c_1,c_2,\dots, c_s\}\,:\, c_i\in M_d(B)\big\}\]
be the set of $s$-element sets in $M_d(B)$. Moreover, given an element $\{c_1,\dots,c_s\}\in \mathcal{S}^{\textup{Mon}}(d,s,B)$ we associate a set of quadratic polynomials with coefficients in $\mathbb{Z}[t]$, \vspace{.05cm}  
\[ S=S\big(\{c_1,\dots,c_s\}\big)=\{x^2+c_1,\dots, x^2+c_s\}, \vspace{.05cm} \] 
and study the sequences in $S$ furnishing surjective arboreal representation over $K=\mathbb{Q}(t)$:  
\[\SurArb(S):=\Big\{\gamma\in\Phi_S^{\textup{sep}}\,:\, G_{\gamma,\,\mathbb{Q}(t)}=\Aut(T_\gamma)\Big\}.\]
In particular, we prove that most sets in $\mathcal{S}^{\textup{Mon}}(d,s,B)$ produce surjective representations with positive probability as $s$ grows. 
\begin{theorem}\label{thm:monic} Let $d>0$, let $s\geq2$, and let $\mathcal{S}^{\textup{Mon}}(d,s,B)$ and $\SurArb(S)$ be as above. If $d$ is even, then  \vspace{.1cm}
\[\liminf_{B\rightarrow\infty}\frac{\#\Big\{S\in \mathcal{S}^{\textup{Mon}}(d,s,B)\,:\,\bar{\nu}_S\big(\SurArb(S)\big)>0\Big\}}{\#\mathcal{S}^{\textup{Mon}}(d,s,B)}\geq 1-\Bigg(1-\Big(\frac{1}{2}\Big)^{\frac{d}{2}}\Bigg)^{\hspace{-.1cm}s}. \vspace{.1cm}\]  
In particular, when $d\geq2$ is fixed and even, almost all sets $S=\{x^2+c_1,\dots,x^2+c_s\}$ with $c_i\in M_d$ furnish surjective arboreal representations with positive probability as $s\rightarrow\infty$. 
\end{theorem}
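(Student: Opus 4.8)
The plan is to run the even-degree case of the proof of Theorem~\ref{thm:nonmonic} while exploiting a feature special to monic polynomials of a fixed even degree $d$: every element of $M_d(B)$ is simultaneously of maximal degree and has odd leading coefficient $1$, so condition~(2) of Theorem~\ref{thm:tool} is met by \emph{every} $c_k$, and hence at \emph{every} level of any sequence. This promotes the conclusion of Corollary~\ref{cor:i.o.} from ``maximal infinitely often'' to ``maximal at every $n$'', which together with stability and separability yields full surjectivity rather than just a big representation.

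Concretely, I would first fix a set $S=\{x^2+c_1,\dots,x^2+c_s\}$ with $c_i\in M_d(B)$ that contains some $c_j$ satisfying $\frac{d}{dt}(\overline{c_j})=1$ in $\mathbb{F}_2[t]$ (condition~(1) of Theorem~\ref{thm:tool}), and consider the sequences $\gamma=(\theta_n)_{n\ge1}\in\Phi_S$ with $\theta_1=x^2+c_j$. For each such $\gamma$ and each $n\ge2$, the map $\theta_n$ equals $x^2+c_k$ for some $k$, and since $c_k$ is monic of degree $d=\max\{\deg c_1,\dots,\deg c_s\}$ with odd leading term, Theorem~\ref{thm:tool} (applied with $j$ fixed but the index $k=k(n)$ depending on $n$) gives that $\gamma$ is $\mathbb{Q}(t)$-stable and that $K_n(\gamma)/K_{n-1}(\gamma)$ is maximal; maximality at $n=1$ is immediate from stability, since then $[K_1(\gamma):\mathbb{Q}(t)]=2=2^{2^{0}}$. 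Thus every such $\gamma$ is $\mathbb{Q}(t)$-stable with $K_n(\gamma)/K_{n-1}(\gamma)$ maximal for all $n\ge1$; since each $\gamma_n$ is moreover separable (we are in characteristic zero), we obtain $\#G_{\gamma,n,\mathbb{Q}(t)}=\prod_{m=1}^{n}2^{2^{m-1}}=2^{2^{n}-1}=\#\Aut(T_{\gamma,n})$ for all $n$, so that $G_{\gamma,\mathbb{Q}(t)}=\Aut(T_\gamma)$, i.e.\ $\gamma\in\SurArb(S)$. As $\nu_S$ is strictly positive, this already yields $\bar{\nu}_S(\SurArb(S))\ge\nu_S(x^2+c_j)>0$ for every such $S$.

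It then remains to count the sets $S\in\mathcal{S}^{\textup{Mon}}(d,s,B)$ containing at least one element satisfying condition~(1), a purely combinatorial computation parallel to Case~(1) of the proof of Theorem~\ref{thm:nonmonic}. For a monic $c=t^d+a_{d-1}t^{d-1}+\dots+a_0\in M_d(B)$ with $d$ even, $\frac{d}{dt}(\bar c)$ involves only the coefficients of odd index, and condition~(1) is equivalent to requiring $a_1$ odd and $a_i$ even for every odd index $3\le i\le d-1$ --- that is, $d/2$ parity conditions on the $d$ free coefficients $a_0,\dots,a_{d-1}$. Writing $x_B=2B+1$ as in Remark~\ref{rem:PdB}, one gets $\#\{c\in M_d(B):c\text{ satisfies }(1)\}=(1/2)^{d/2}x_B^{d}+O(x_B^{d-1})$, so $(1-(1/2)^{d/2})x_B^{d}+O(x_B^{d-1})$ monic polynomials fail it; by Remark~\ref{rem:binompoly}, the number of $S\in\mathcal{S}^{\textup{Mon}}(d,s,B)$ all of whose elements fail condition~(1) is $\tfrac{1}{s!}(1-(1/2)^{d/2})^s x_B^{ds}+O(x_B^{ds-1})$, while $\#\mathcal{S}^{\textup{Mon}}(d,s,B)=\binom{x_B^d}{s}=\tfrac1{s!}x_B^{ds}+O(x_B^{ds-1})$. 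Combining with the previous paragraph, subtracting, dividing, and letting $B\to\infty$ gives $\liminf\ge 1-(1-(1/2)^{d/2})^s$. The main obstacle --- really the only point requiring care beyond the bookkeeping already done for Theorem~\ref{thm:nonmonic} --- is the surjectivity upgrade in the second paragraph: one must check that Theorem~\ref{thm:tool} legitimately applies at every level $n\ge2$ with the \emph{same} first map $\theta_1=x^2+c_j$ but an $n$-dependent choice of $c_k$, and that ``stable plus maximal at all levels plus separable iterates'' is precisely what surjectivity of the arboreal representation amounts to (via Proposition~\ref{prop:maximality} and Remark~\ref{rem:maximal}).
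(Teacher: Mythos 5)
Your proposal is correct and takes essentially the same approach as the paper: reduce to the combinatorial count of sets containing some $c_j$ with $\frac{d}{dt}(\overline{c_j})=1$, and use the fact that every element of $M_d(B)$ automatically satisfies condition~(2) of Theorem~\ref{thm:tool} to upgrade ``maximal infinitely often'' to ``maximal at every level,'' hence surjective. The only difference is that you spell out explicitly that Theorem~\ref{thm:tool} applies level-by-level with $n$-dependent $k$ and that the resulting order computation forces $G_{\gamma,n,\Q(t)}=\Aut(T_{\gamma,n})$, details the paper leaves implicit.
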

\begin{proof} The proof is very similar to that of statement (1) of Theorem \ref{thm:nonmonic} (only now the leading terms are fixed). In particular, it is straightforward to check that $\#M_d(B)=x_B^d$ and
\begin{equation}  
\#\Big\{c\in M_d(B)\,:\,\text{$\frac{d}{dt}(\bar{c})=1$ in $\mathbb{F}_2[t]$}\Big\}=\Big(\frac{1}{2}\Big)^{\frac{d}{2}}\cdot x_B^d +O(x_B^{d-1}), 
\end{equation}  
since the $c$'s above must have even coefficients at every odd-powered term, except for the linear term, which must be odd - that is, there is a parity stipulation on the coefficients of $c$ at every odd-powered term (and $d/2$ such terms). Therefore, similar to Case (1) in the proof of Theorem \ref{thm:nonmonic}, there are \vspace{.1cm} 
\begin{equation}\label{eq:count-monic}
\binom{x_B^d}{s}-\binom{\big(1-({\frac{1}{2}})^{\frac{d}{2}}\big)\cdot x_B^{d}+O({x_B}^{d-1})}{s}=\frac{\Big(1-\big(1-({\frac{1}{2}})^{\frac{d}{2}}\big)^s\Big)}{s!}\,x_B^{ds}+O(x_B^{ds-1}) \vspace{.15cm}
\end{equation} 
sets in $\mathcal{S}^{\text{Mon}}(d,s,B)$ with at least one term $c_j\in\mathbb{Z}[t]$ satisfying $\frac{d}{dt}(\bar{c}_j)=1$ in $\mathbb{F}_2[t]$. 	On the other hand, such collections determine sets of quadratic polynomials $S=\{x^2+c_1,\dots,x^2+c_s\}$ which furnish surjective arboreal representations over $\mathbb{Q}(t)$ with positive probability by Theorem \ref{thm:tool}: the sequences $\gamma=(\theta_n)_{n\geq1}$ with $\theta_1=x^2+c_j$ produce maximal extensions for every $n$ since condition (2) of Theorem \ref{thm:tool} is satisfied by all $c\in M_d(B)$. Therefore, it follows from \eqref{eq:count-monic} that \vspace{.1cm}   
\begin{equation}\label{eq-count-monic2}
\#\Big\{S\in\mathcal{S}^{\text{Mon}}(d,s,B) \,:\,\bar{\nu}_S\big(\SurArb(S)\big)>0\Big\}\geq \frac{\Big(1-\big(1-({\frac{1}{2}})^{\frac{d}{2}}\big)^s\Big)}{s!}\,x_B^{ds}+O(x_B^{ds-1}). \vspace{.1cm} 
\end{equation} 
In particular, after dividing both sides of \eqref{eq-count-monic2} by \vspace{.05cm}
\[\#\mathcal{S}^{\text{Mon}}(d,s,B)=\binom{x_B^d}{s}=\frac{1}{s!}x_B^{ds}+O(x_B^{ds-1})\vspace{.05cm}\]  
and letting $B\rightarrow\infty$, we obtain the lower bound \vspace{.15cm}  
\[\liminf_{B\rightarrow\infty}\frac{\#\Big\{S\in \mathcal{S}^{\textup{Mon}}(d,s,B)\,:\,\bar{\nu}_S\big(\SurArb(S)\big)>0\Big\}}{\#\mathcal{S}^{\textup{Mon}}(d,s,B)}\geq 1-\Bigg(1-\Big(\frac{1}{2}\Big)^{\frac{d}{2}}\Bigg)^{\hspace{-.1cm}s}\vspace{.15cm}\]
in Theorem \ref{thm:monic} as claimed.                
\end{proof} 
\section{Applications to density of prime divisors}\label{sec:density}

Let $K$ be a number field with ring of integers $\mathcal{O}_K$ and let $\gamma=(\theta_n)_{n\geq1}$ for some $\theta_n\in K[x]$. In this section we consider sequences of the form $(\gamma_n(a_0))_{n \geq 0}$, where $a_0 \in K$, $\gamma_0(x) = x$, and $\gamma_n(x) = (\theta_1 \circ \cdots \circ \theta_n)(x)$ for $n \geq 1$. In particular, we are interested in the set of prime ideal divisors of our sequence, namely
$$
P(\gamma, a_0) := \{\p \subset \O_K : \text{$\p$ is prime and $\p \mid \gamma_n(a_0)$ for at least one $n \geq 0$ with $\gamma_n(a_0) \neq 0$}\}.
$$
Recall that the natural density of a set $T$ of primes in $\O_K$ is
\begin{equation*}\label{densedef}
D(T) = \lim_{x \to \infty} \frac{\#\{\p \in T : N(\p) \leq x\}}{\#\{\p : N(\p) \leq x\}},
\end{equation*}
provided that this limit exists. Here $N(\p)$ denotes the norm of $\p$. Likewise, we define the upper density $D^+(T)$ by replacing the limit above with a $\limsup$.      
In particular, our goal is to determine $D(P(\gamma, a_0))$ in certain cases. 

As a first step, we relate $D(P(\gamma, a_0))$ to the Galois group $G_{\gamma, n, K}$ of $K_n(\gamma)/K$. Throughout this section, we suppress the dependence on $\gamma$ and $K$ and write $G_n$ in place of $G_{\gamma, n, K}$. The group $G_n$ acts naturally on the roots $R_n$ of $\gamma_n$ (note that $R_n$ is the $n$th level of the tree $T_{\gamma,n},$ which we denote just by $T_n$). Define the \textit{fixed-point proportion} of $G_n$ to be
\begin{equation} \label{fppdef}
\FPP(G_n) := \frac{\#\{g \in G_n : \text{$g$ fixes at least one element of $R_n$}\}}{\#G_n}.
\end{equation}
Note that since $G_n$ acts on $T_n$ by tree automorphisms for every $n$, we have that the sequence $(\FPP(G_n))_{n \geq 1}$ is non-increasing, and hence its limit must exist. 

The following theorem is a version of \cite[Theorem 2.1]{Jones}, adapted to the present circumstances. 
\begin{theorem} \label{density to Galois}
Assume that $\gamma_n$ is separable over $K$ for all $n \geq 1$, and let $a_0 \in K$. Then 
\begin{equation} \label{ineq}
D^+(P(\gamma, a_0)) \leq \lim_{n \to \infty} \textup{FPP}(G_n).
\end{equation}
\end{theorem}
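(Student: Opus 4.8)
The plan is to follow the classical strategy relating densities of prime divisors in orbits to fixed-point proportions of Galois groups, as in \cite[Theorem 2.1]{Jones}, adapting the chain of inequalities to the sequential setting. The key observation is that for a prime $\p$ of $\O_K$ lying outside a finite ``bad'' set (those dividing the leading coefficients, denominators, or the relevant discriminants $\disc(\gamma_n)$), the condition that $\p \mid \gamma_n(a_0)$ for some $n$ can be detected by the factorization of $\p$ in the tower $K_n(\gamma)/K$. Concretely, if $\p \mid \gamma_n(a_0)$ and $\p$ is unramified and coprime to the bad data, then $\gamma_n(x) \equiv 0 \pmod{\p}$ has a root $a_0 \bmod \p$ in the residue field, which forces some Frobenius element at $\p$ to fix a root of $\gamma_n$, i.e. to lie in the set counted by $\FPP(G_n)$. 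Thus, for each fixed $n$,
\[
P(\gamma,a_0) \subseteq \{\p : \Frob_\p \text{ fixes an element of } R_n\} \cup (\text{finite set}),
\]
and the Chebotarev density theorem gives $D^+(P(\gamma,a_0)) \le \FPP(G_n)$ since the density of primes whose Frobenius conjugacy class meets a given subset $C \subseteq G_n$ is $\#C/\#G_n$ (here $C$ is conjugation-stable because ``fixes at least one element of $R_n$'' is a conjugation-invariant property).

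First I would set up the reduction-theoretic bookkeeping: fix $a_0 \in K$, write $a_0$ and the $c_i$ with a common denominator, and let $\Sigma$ be the finite set of primes of $\O_K$ dividing this denominator, the leading coefficients, or $\disc(\gamma_n)$ — but since we want a bound uniform in $n$, the cleaner route is to fix $n$, let $\Sigma_n$ be the corresponding finite bad set for that particular $n$, prove $D^+(P(\gamma,a_0)) \le \FPP(G_n)$ for each $n$ using Chebotarev with $\Sigma_n$ (finite sets have density zero so they don't affect $D^+$), and only then take $n \to \infty$. The monotonicity of $(\FPP(G_n))_{n\ge 1}$, already noted in the excerpt because $G_n$ acts by tree automorphisms, guarantees the limit exists and equals the infimum, so passing to the limit on the right-hand side is immediate and yields \eqref{ineq}.

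The step I expect to be the main obstacle — or at least the one requiring the most care — is the precise implication ``$\p \mid \gamma_n(a_0)$, $\p$ good $\Longrightarrow$ $\Frob_\p$ fixes a root of $\gamma_n$.'' One must check that reducing mod $\p$ commutes with the relevant operations: that $\gamma_n$ stays separable mod $\p$ (guaranteed away from $\disc(\gamma_n)$, using the separability hypothesis on $\gamma_n$), that $a_0$ reduces to an honest element of the residue field (away from the denominator of $a_0$), and that a root of $\gamma_n \bmod \p$ in the residue field lifts to a root $\alpha \in \overline{K}$ of $\gamma_n$ whose reduction is fixed by $\Frob_\p$ — this is exactly the standard dictionary between residue-field roots and fixed points of Frobenius on the geometric roots for an unramified prime. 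A subtlety worth flagging is the case $\gamma_n(a_0) = 0$, which is excluded in the definition of $P(\gamma,a_0)$ but can happen for at most finitely many $n$ (and for such $n$ one simply passes to a larger index, using that $\gamma_{n+1}(a_0) = \gamma_n(\theta\text{-tail}(a_0))$ and the sequence of values is eventually nonzero unless $a_0$ is a root of every $\gamma_n$, an easily-handled degenerate situation). Once this implication is in hand, the conclusion follows from Chebotarev and the monotone limit with no further difficulty.
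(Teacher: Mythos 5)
Your proposal follows the same strategy as the paper's proof: for each fixed $n$, restrict to primes of good reduction, apply the Chebotarev density theorem to the condition that $\gamma_n$ has a root modulo $\p$ (equivalently, that the Frobenius at $\p$ fixes a root of $\gamma_n$, a conjugation-invariant condition), and then pass to the limit using the monotonicity of $\FPP(G_n)$. The one step you state slightly loosely is the inclusion of $P(\gamma,a_0)$, minus a finite set, into the set of primes whose Frobenius fixes a root of $\gamma_n$: for a good prime $\p$ dividing $\gamma_m(a_0)$ with $m>n$, the root of $\gamma_n$ modulo $\p$ is not $a_0$ itself but the reduction of $(\theta_{n+1}\circ\cdots\circ\theta_m)(a_0)$, via $\gamma_m=\gamma_n\circ\theta_{n+1}\circ\cdots\circ\theta_m$---precisely the composition step the paper makes explicit when it shows its set $\Omega_n$ is contained in $R_n$.
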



\begin{remark}
Our applications of Theorem \ref{density to Galois} are all in the case where $\textup{FPP}(G_n) \to 0$, in which case \eqref{ineq} implies that $D(P(\gamma, a_0))$ exists (and equals 0). In general when 
$\lim_{n \to \infty} \textup{FPP}(G_n) > 0$ we do not a priori know that $D(P(\gamma, a_0))$ exists. In the setting of iteration of a single rational map $\phi$, there are known cases where $\lim_{n \to \infty} \textup{FPP}(G_n) > 0$, and they occur when $\phi$ is a finite quotient of an affine map of an abelian algebraic group. In this situation the extra structure often allows one to show directly that the relevant density of prime divisors exists (and equals $\lim_{n \to \infty} \textup{FPP}(G_n)$). See e.g. \cite[Theorem 3.2]{jones-rouse}.
\end{remark}

\begin{proof} Denote the discriminant of $\gamma_n$ by $\text{Disc}(\gamma_n)$, and for a prime $\p$ of $\O_K$ let $v_\p$ be the $\p$-adic valuation. Let $B_n$ be the following finite set of primes of $\O_K$:
$$
B_n = \{\p : \text{$\p \mid \text{Disc}(\gamma_n)$ or $v_\p(c) < 0$ for some coefficient $c$ of $\gamma_n$}\},
$$
and note that if $\p \not\in B_n$ then $\gamma_n$ has good reduction, i.e. reducing $\gamma_n$ coefficient-wise modulo $\p$ yields a polynomial $\widetilde{\gamma_n}$ in $\O_K/\p$ of degree $\deg(\gamma_n)$. In particular, every root $\alpha$ of $\gamma_n$ satisfies $v_\p(\alpha) \geq 0$. Fix $n \geq 1$, and let
\begin{align*}
\Omega_n & = \{\p: \text{$\p \not\in B_n$ and $\gamma_n(x) \equiv 0 \bmod{\p}$ has no solution in $K$}\}, \\
R_n & = \{\text{$\p : \p \not\in B_n$ and $\p \nmid \gamma_N(a_0)$ for all $N \geq n$}\}.
\end{align*}

Assume that $\p \in \Omega_n$ and take $N > n$. If $\gamma_N(x) \equiv 0 \bmod{\p}$ has a solution in $K$, then because $\gamma_N = \gamma_n \circ \theta_{n+1} \circ \cdots \circ \theta_N$, it follows that $\gamma_n(x) \equiv 0 \bmod{\p}$ has a solution in $K$, and thus $\Omega_n \subseteq R_n$. 
Observe that there are only finitely many prime $\p$ with 
$\p \mid \gamma_N(a_0)$ and $\gamma_N(a_0) \neq 0$  for some $N < n$, and this together with the finiteness of $B_n$ imply that $R_n$ and the complement $P(\gamma, a_0)^c$ differ by a finite set of primes, which we denote by $F_n$. 

Assume for a moment that $D(\Omega_n)$ exists and equals $d_n$. Let $\epsilon > 0$ and take $x_0$ large enough so that if $x \geq x_0$ then both 
\begin{equation} \label{specifics}
\frac{\#\{\p \in \Omega_n : N(\p) \leq x\}}{\#\{\p : N(\p) \leq x\}} > d_n - \epsilon/2 \quad \text{and}  \quad \frac{\#\{\p \in F_n : N(\p) \leq x\}}{\#\{\p : N(\p) \leq x\}} < \epsilon/2. 
\end{equation}
The first inequality in \eqref{specifics} implies that $\frac{\#\{\p \in R_n : N(\p) \leq x\}}{\#\{\p : N(\p) \leq x\}} > d_n - \epsilon/2$, and together with the second inequality in \eqref{specifics} this gives 
$$
\frac{\#\{\p \in P(\gamma, a_0)^c : N(\p) \leq x\}}{\#\{\p : N(\p) \leq x\}} > d_n - \epsilon.
$$
This implies that 
$
\frac{\#\{\p \in P(\gamma, a_0) : N(\p) \leq x\}}{\#\{\p : N(\p) \leq x\}} < (1 - d_n) + \epsilon,
$
from which we obtain $D^+(P(\gamma, a_0)) < 1-d_n$, and thus $D^+(P(\gamma, a_0)) \leq \lim_{n \to \infty} (1-d_n)$.

The proof will be complete once we show that $D(\Omega_n)$ exists and equals $1 - \FPP(G_n)$. 
If $\p \nmid \text{Disc}(\gamma_n)$, then $\p$ cannot divide the field discriminant of $K_n(\gamma)/K$; see, for instance, \cite[Corollary 2, p. 157]{narkiewicz}. Hence, such $\p$ are unramified in the extension $K_n(\gamma)/K$.  Now
$\gamma_n(x) \equiv 0 \pmod{\p}$ having a solution in $K$ is equivalent to $\widetilde{\gamma_n}$ having at least one linear factor in $(\O_K/\p)[x]$.  Except for possibly finitely many $\p$, this implies that 
$\p\O_L = \mathfrak{P}_1 \cdots \mathfrak{P}_r$, where $L/K$ is obtained by adjoining a root of $\gamma_n$, $\O_L$ is the ring of integers of $L$, and at least one of the $\mathfrak{P}_i$ has residue class degree one \cite[Theorem 4.12]{narkiewicz}. This is equivalent to the disjoint cycle decomposition of the Frobenius conjugacy class at $\p$ having a fixed point (in the natural permutation representation of $G_n$ on the roots $R_n$ of $\gamma_n$). From the Chebotarev Density Theorem it follows \cite[Proposition 7.15]{narkiewicz} that the density of $\p$ with $\p\O_L$ having such a decomposition exists and equals $\FPP(G_n)$. We have thus shown that $D(\Omega_n^c)$ exists and equals $\FPP(G_n)$, and the proof is complete. 
\end{proof}

We now tackle the problem of computing $\lim_{n \to \infty} \FPP(G_n)$ for certain choices of $\gamma_n$. A convenient vehicle for this is a stochastic process that encodes fixed-point information about the action of elements of $G_\infty := \varprojlim G_n$ on $R_n$ for each $n$. Let $\textbf{P}$ be the Harr measure on $G_\infty$, normalized so that $\textbf{P}(G_\infty) = 1$. Each $g \in G_\infty$ acts on $R_n$ for all $n \geq 1$. We define random variables $X_1, X_2, \ldots$ by 
$$
X_n(g) = \text{number of elements of $R_n$ fixed by $g$}.
$$
We call the stochastic process $(X_1, X_2, \ldots)$ the \textit{fixed-point process} of $G_\infty$. In particular, note that $\FPP(G_n) = \textbf{P}(X_n > 0)$.

\begin{definition} \label{martdef}
A stochastic process $(X_0, X_1, X_2, \ldots)$ taking values in $\mathbb{Z}$ is a {\em martingale} if for all $n \geq 1$ and any $t_i \in \mathbb{Z}$, 
\[E(X_n \mid X_{0} = t_{0}, X_1 = t_1, \ldots, X_{n-1} = t_{n-1}) = t_{n-1}.\]
We call $(X_0, X_1, X_2, \ldots)$ an {\em eventual martingale} if for some $n_0 \geq 1$ the tail end process
$X_{n_0}, X_{n_0 + 1}, X_{n_0 + 2}, \ldots $ is a martingale.  
\end{definition}
A standard martingale convergence theorem shows the following agreeable property of eventual martingales, which is Corollary 2.3 of \cite{Jones}:
\begin{proposition} \label{evconstprop}
Suppose that the Galois process of $G_\infty$ is an eventual martingale.  Then
\[\textbf{P}(\{g \in G_\infty : \text{$X_1(g), X_2(g), \ldots$ is eventually constant}\}) = 1.\]
\end{proposition}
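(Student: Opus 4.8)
The plan is to deduce this from Doob's martingale convergence theorem together with the integrality of the fixed-point process. First I would record the boundedness of the process: for each $n$ the set $R_n$ of roots of $\gamma_n$ is finite, so $0 \le X_n(g) \le \#R_n$ for every $g \in G_\infty$, and in particular $X_n \in L^1(\textbf{P})$. By hypothesis there is an index $n_0$ such that the tail process $X_{n_0}, X_{n_0+1}, \dots$ is a martingale with respect to the filtration $\mathcal{F}_{n} = \sigma(X_0,\dots,X_n)$ it generates (this is exactly the conditioning in Definition \ref{martdef}). Being a nonnegative martingale, its mean is constant, $E(X_n) = E(X_{n_0})$ for all $n \ge n_0$, so it is bounded in $L^1$.

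Next I would apply the martingale convergence theorem to the tail process: there is an integrable random variable $X_\infty$ on $G_\infty$ with $X_n \to X_\infty$ $\textbf{P}$-almost surely. The crucial final step is to use that each $X_n$ is $\mathbb{Z}$-valued. For a fixed $g$ in the full-measure set on which $(X_n(g))_{n \ge n_0}$ converges, the sequence is Cauchy, hence there is $N$ with $|X_n(g) - X_m(g)| < 1$ for all $n,m \ge N$; since these are integers, this forces $X_n(g) = X_m(g)$ for $n,m \ge N$, i.e. the sequence is eventually constant. Because altering or prepending finitely many terms does not change whether a sequence is eventually constant, the full sequence $X_1(g), X_2(g), \dots$ is eventually constant for $\textbf{P}$-almost every $g$, which is the assertion of the proposition.

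I do not expect a genuine obstacle here — the statement is, as noted, Corollary 2.3 of \cite{Jones}, and the argument is entirely standard. The only two points deserving explicit mention are (i) verifying the hypotheses of the convergence theorem, for which nonnegativity plus the constancy of the mean of a martingale gives $L^1$-boundedness, and (ii) making the elementary but essential observation that an almost surely convergent sequence of \emph{integer-valued} random variables is the same thing as an almost surely eventually constant one. Everything else is immediate.
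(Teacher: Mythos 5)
Your proof is correct. The paper does not actually supply an argument here—it merely cites Corollary 2.3 of \cite{Jones}—and what you wrote (the tail process is a nonnegative, hence $L^1$-bounded, martingale, so Doob's convergence theorem gives almost-sure convergence, and integrality then forces eventual constancy on the full-measure convergence set) is exactly the standard argument that underlies that citation.
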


Denote the Galois group of $K_n(\gamma)/K_{n-1}(\gamma)$ by $H_n$. Proposition \ref{evconstprop} allows us to obtain significant information about the Galois process of $G_\infty$ from knowledge of only an infinite number of $H_n$. The following is Lemma 5.3 of \cite{Jones-io}:
\begin{lemma}[\cite{Jones-io}] \label{maxlem}
Assume that all $\theta_i$ are quadratic, and let $n \geq 2$. If $H_n$ is maximal, then for any $m$ with $1 \leq m < n$ and any integer $u > 0$, 
\begin{equation} \label{cond}
\textbf{P}(X_n = u \mid X_m = u, X_{m+1} = u, \ldots, X_{n-1} = u) \leq \frac{1}{2}.
\end{equation}
\end{lemma}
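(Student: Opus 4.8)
The plan is to read off the conditional distribution of $X_n$ directly from the tree structure, exploiting the fact that maximality of $H_n$ makes the relevant ``local parities'' independent and uniform.

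First I would record the shape of $X_n$ as a function on $G_\infty$. Recall that $R_{n-1}$ is the set of vertices at level $n-1$ of $T_n$, and (since the $\theta_i$ are quadratic and $\gamma_n$ is separable) each $v\in R_{n-1}$ has exactly two children in $R_n$; moreover $H_n=\Gal(K_n(\gamma)/K_{n-1}(\gamma))$ is maximal precisely when it equals the full group $\prod_{v\in R_{n-1}}\mathrm{Sym}(\{\text{children of }v\})$ of automorphisms of $T_n$ that are trivial on $T_{n-1}$. Now observe: if $g\in G_\infty$ fixes a vertex $w\in R_n$, then $g$ fixes the parent $v\in R_{n-1}$ of $w$; and if $g$ fixes such a $v$, then $g$ either fixes both children of $v$ or interchanges them. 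Hence
\[
X_n(g)=2\,N_n(g),\qquad N_n(g):=\#\{v\in R_{n-1}\,:\,g(v)=v\ \text{and $g$ fixes both children of }v\},
\]
so $X_n$ takes only even values and $N_n(g)\le X_{n-1}(g)$ for every $g$.

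Next I would condition. The event $E:=\{X_m=u,\,X_{m+1}=u,\,\ldots,\,X_{n-1}=u\}$ depends only on the action of $g$ on $T_{n-1}$, i.e.\ only on the image $\bar g$ of $g$ in $G_{n-1}$; and on $E$ the automorphism $\bar g$ fixes exactly $u$ vertices of $R_{n-1}$. The key input is then maximality together with the structure of Haar measure on $G_\infty=\varprojlim G_n$: conditional on its image $\bar g\in G_{n-1}$, the image of $g$ in $G_n$ is uniform on a coset of $H_n=\prod_{v\in R_{n-1}}\mathrm{Sym}(\{\text{children of }v\})$, so for each of the $u$ vertices $v$ fixed by $\bar g$ the event ``$g$ fixes both children of $v$'' has probability $\tfrac12$, and these $u$ events are mutually independent. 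Consequently, conditional on $E$, the random variable $N_n$ is $\mathrm{Binomial}(u,\tfrac12)$, and therefore
\[
\mathbf{P}(X_n=u\mid E)=\mathbf{P}\!\left(N_n=\tfrac{u}{2}\,\middle|\,E\right)=
\begin{cases}0,& u\text{ odd},\\[3pt]\binom{u}{u/2}2^{-u},& u\text{ even}.\end{cases}
\]

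Finally I would invoke the elementary fact that the central binomial probabilities $\binom{2k}{k}2^{-2k}$ are non-increasing in $k\ge1$ — the ratio of consecutive terms is $\tfrac{2k+1}{2k+2}<1$ — and equal $\tfrac12$ at $k=1$; hence in every case $\mathbf{P}(X_n=u\mid E)\le\tfrac12$, with equality exactly when $u=2$, which proves \eqref{cond}. I expect the one genuinely delicate point to be the conditional-independence step: turning ``$H_n$ is maximal'' into ``given $\bar g$, the local parities at the $\bar g$-fixed vertices of $R_{n-1}$ are i.i.d.\ uniform'' requires a careful description of the conditional Haar measure on the fibres of $G_\infty\to G_{n-1}$ and the identification of $H_n$ with $\prod_{v\in R_{n-1}}(\mathbb{Z}/2)$ acting coordinate-wise on the pairs of children. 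Everything else is tree combinatorics and a one-line binomial estimate.
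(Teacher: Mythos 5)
Your proof is correct and takes essentially the same route as the paper's justification: the paper simply cites Lemma 5.2 of \cite{Jones-io}, which says precisely that maximality of $H_n$ forces the conditional distribution of $X_n$ (given $X_{n-1}=u$) to be $2\cdot\mathrm{Binomial}(u,\tfrac12)$, and then evaluates $\binom{u}{u/2}2^{-u}\le\tfrac12$. What you have done is unpack the content of that cited lemma — identifying $H_n$ with the coordinate-wise product of $\mathbb{Z}/2$'s over $R_{n-1}$, using the description of Haar measure on fibres of $G_\infty\to G_{n-1}$ to get i.i.d.\ fair coins at the $u$ fixed vertices, and observing that the conditioning event $E$ is measurable with respect to $G_{n-1}$ so the extra conditioning on $X_m,\ldots,X_{n-2}$ is harmless — which is the right substance, just spelled out where the paper delegates to \cite{Jones-io}.
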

Indeed, if $u$ is not of the form $2w$ for $1 \leq w \leq 2^{m-1}$, then $\textbf{P}(X_n = u) = 0$. Otherwise, Lemma 5.2 of \cite{Jones-io} shows that the maximality of $H_n$ forces $(X_n \, | \, (X_{n-1} = u))$ to have the same distribution as flipping $u$ fair coins, with heads counting 2 and tails 0. Thus the left-hand side of \eqref{cond} is 
${u\choose{u/2}} \frac{1}{2^u}$, which is easily seen to be at most $1/2$.

To make use of Proposition \ref{evconstprop} and Lemma \ref{maxlem}, we wish to give conditions on $\gamma$ under which the fixed point process of $G_\infty$ is an eventual martingale. We make the assumption that all $\theta_i$ are quadratic, and use arguments similar to those in \cite[Section 2]{Jones}. 

\begin{theorem} \label{evmartthm}
Suppose that for all $n \geq 1$, $\theta_n$ is quadratic and $\gamma_n$ is irreducible over $K$. If $H_n$ is non-trivial for all $n$ sufficiently large, then the fixed point process of $G_\infty$ is an eventual martingale. 
\end{theorem}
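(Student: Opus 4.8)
The plan is to prove the martingale increment
$E\big(X_n \mid X_{n_0} = t_{n_0}, \dots, X_{n-1} = t_{n-1}\big) = t_{n-1}$
for every $n$ with $H_n$ non-trivial; by Definition \ref{martdef} this is exactly the eventual martingale property. Since each $\theta_i$ is quadratic and each $\gamma_n$ is irreducible (hence separable) over the number field $K$, the tree $T_\gamma$ is the complete rooted binary tree: $R_n$ has $2^n$ vertices and every $v \in R_{n-1}$ has exactly two children in $R_n$. For $g \in G_\infty$ write $Y_v(g)$ for the number of children of $v \in R_{n-1}$ fixed by $g$; then $Y_v(g) = 0$ unless $g$ fixes $v$, in which case $Y_v(g) \in \{0,2\}$ (it equals $2$ if $g$ fixes both children of $v$ and $0$ if it swaps them), and since a vertex of $R_n$ can be fixed only if its parent is, $X_n(g) = \sum_{v \in R_{n-1}} Y_v(g)$.

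Next I would set up the conditioning. Let $\mathcal F_{n-1}$ be the $\sigma$-algebra on $(G_\infty, \textbf{P})$ pulled back from $G_{n-1}$ along $G_\infty \to G_{n-1}$. Then $X_1, \dots, X_{n-1}$ are $\mathcal F_{n-1}$-measurable, the set $\{v \in R_{n-1} : g(v) = v\}$ is $\mathcal F_{n-1}$-measurable, and conditionally on $\mathcal F_{n-1}$ the image of $g$ in $G_n$ is uniformly distributed on a coset $g_0 H_n$, where $\pi_n \colon G_n \to G_{n-1}$ is restriction, $H_n = \ker \pi_n$, and the coset records the fixed action on $R_{n-1}$. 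By the tower property it suffices to show that $\textbf{P}(g \text{ fixes both children of } v \mid \mathcal F_{n-1}) = \tfrac12$ for every $v$ fixed by $g_0$: granting this, $E(X_n \mid \mathcal F_{n-1}) = \#\{v \in R_{n-1} : g(v) = v\} = X_{n-1}$, and conditioning further onto $\sigma(X_{n_0}, \dots, X_{n-1}) \subseteq \mathcal F_{n-1}$ (on which $X_{n-1}$ is measurable) yields the claimed increment.

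The crux is computing this conditional probability. Write $\theta_n = x^2 + c$ and let $\alpha_1, \dots, \alpha_{2^{n-1}}$ be the roots of $\gamma_{n-1}$, so that $K_n(\gamma) = K_{n-1}(\gamma)(\sqrt{\beta_1}, \dots, \sqrt{\beta_{2^{n-1}}})$ with $\beta_i = \alpha_i - c \ne 0$, the two children of $\alpha_i$ being $\pm\sqrt{\beta_i}$. Each $h \in H_n$ acts by $h(\sqrt{\beta_i}) = \epsilon_i(h)\sqrt{\beta_i}$ for a homomorphism $\epsilon_i \colon H_n \to \{\pm1\}$, and if $g_0$ fixes $\alpha_i$, say $g_0(\sqrt{\beta_i}) = \delta_i\sqrt{\beta_i}$ with $\delta_i \in \{\pm1\}$, a one-line computation gives $(g_0 h)(\sqrt{\beta_i}) = \epsilon_i(h)\delta_i\sqrt{\beta_i}$; hence for $v = \alpha_i$ we have $Y_v(g_0 h) = 2 \iff \epsilon_i(h) = \delta_i$, which holds for exactly half of $h \in H_n$ as soon as $\epsilon_i$ is surjective onto $\{\pm1\}$. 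Now the hypotheses enter: $H_n$ is normal in $G_n$ because $K_{n-1}(\gamma)/K$ is Galois, and for $\sigma \in G_n$ with $\sigma(\alpha_i) = \alpha_j$ one checks directly that $\epsilon_j(\sigma h \sigma^{-1}) = \epsilon_i(h)$ for all $h \in H_n$, so conjugation permutes the characters $\epsilon_i$ in step with the $G_n$-action on the $\alpha_i$. Since $\gamma_{n-1}$ is irreducible over $K$, $G_{n-1}$ — hence $G_n$ via the surjection $\pi_n$ — acts transitively on $\{\alpha_1, \dots, \alpha_{2^{n-1}}\}$, so the $\epsilon_i$ are all conjugate and thus all have the same order. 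Therefore $H_n \ne 1$ forces every $\epsilon_i$ to be non-trivial, i.e. surjective, which completes the computation. Choosing $n_0$ so that $H_n \ne 1$ for all $n \ge n_0$ then shows that $X_{n_0}, X_{n_0+1}, \dots$ is a martingale.

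The only genuinely non-formal step is the propagation of non-triviality in the previous paragraph: deducing from the single hypothesis $H_n \ne 1$ that every quadratic character $\epsilon_i$ is non-trivial. This is precisely where irreducibility of $\gamma_{n-1}$ is indispensable — it supplies the transitive Galois action on the roots $\alpha_i$, and the conjugation identity $\epsilon_j(\sigma h \sigma^{-1}) = \epsilon_i(h)$ then transports non-triviality from one character to all of them. Without transitivity some $\epsilon_i$ could be trivial, making $Y_{\alpha_i}$ deterministic and biasing the increment. Everything else — the coset description of $\textbf{P}(\cdot \mid \mathcal F_{n-1})$, the tower-property reduction, and the computation of $(g_0 h)(\sqrt{\beta_i})$ — is routine and parallels the arguments in \cite[Section 2]{Jones}.
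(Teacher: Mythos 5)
Your proof is correct, and the mathematical core is identical to the paper's: in both arguments the decisive step is that irreducibility of $\gamma_{n-1}$ over $K$ gives a transitive action of $G_{n-1}$ (hence of $G_n$) on the roots $\alpha_1,\dots,\alpha_{2^{n-1}}$, which then transports ``non-triviality'' from one fiber of $\theta_n$ to every other fiber. The difference is purely expository. The paper reduces the theorem to a single combinatorial criterion by citing Lemma 2.4 and Theorem 2.5 of \cite{Jones}: it suffices that $\theta_n-\alpha$ be irreducible over $K_{n-1}(\gamma)$ for every root $\alpha$ of $\gamma_{n-1}$, once $n$ is large. The remaining task in the paper is then exactly your argument that $H_n\neq 1$ forces at least one $\theta_n-\alpha$ to be irreducible over $K_{n-1}(\gamma)$, and transitivity forces them all to be. You instead unwind the content of the cited results: you set up the $\sigma$-algebra $\mathcal F_{n-1}$ pulled back from $G_{n-1}$, observe that the conditional law of the image of $g$ in $G_n$ is uniform on a coset $g_0H_n$, introduce the quadratic characters $\epsilon_i\colon H_n\to\{\pm1\}$ encoding the sign change on $\sqrt{\beta_i}$, verify the conjugation identity $\epsilon_j(\sigma h\sigma^{-1})=\epsilon_i(h)$ whenever $\sigma(\alpha_i)=\alpha_j$, and conclude that each $\epsilon_i$ is surjective once one of them is, so that each summand $Y_v$ contributes conditional expectation $\mathbf{1}\{g_0 \text{ fixes } v\}$ and the tower property finishes the job. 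Your condition ``$\epsilon_i$ non-trivial'' is precisely equivalent to the paper's ``$\theta_n-\alpha_i$ irreducible over $K_{n-1}(\gamma)$,'' since $\theta_n-\alpha_i$ has roots $\pm\sqrt{\beta_i}$. So you buy self-containedness at the cost of reproving the relevant part of \cite{Jones}; the paper buys brevity by citing. Both are sound, and the idea you isolate as ``the only genuinely non-formal step'' — propagating non-triviality via transitivity — is exactly the one the paper's proof also hinges on.
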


\begin{proof}
First note that since $K$ is a number field, the irreducibility of $\gamma_n$ over $K$ implies that $\gamma_n$ is separable over $K$. From Lemma 2.4 and Theorem 2.5 of \cite{Jones} it follows that the present theorem is proved provided that the following holds: there exists $n_0 \geq 1$ such that for all $n \geq n_0$ and every root $\alpha$ of $\gamma_{n-1}$, the polynomial $\theta_n - \alpha$ is irreducible over $K_{n-1}(\gamma)$. Let $n_0$ be such that $H_n$ is non-trivial for $n \geq n_0$, and fix $n \geq n_0$. Observe that 
\begin{equation} \label{rootspart}
\gamma_n = \prod_{\alpha \in \gamma_{n-1}^{-1}(0)} (\theta_n(x) - \alpha).
\end{equation}
Because $H_n$ is non-trivial, there must be some $\alpha$ such that $\theta_n - \alpha$ has a root not in $K_{n-1}(\gamma)$. Because $\theta_n$ has degree 2, this implies that $\theta_n - \alpha$ is irreducible over $K_{n-1}(\gamma)$. By hypothesis, $\gamma_{n-1}$ is irreducible over $K$, and hence $G_{n-1}$ acts transitively on the roots of $\gamma_{n-1}$. It follows that $\theta_n - \alpha$ is irreducible over $K_{n-1}(\gamma)$ for all $\alpha$, as desired. 
\end{proof}

We are now in a position to prove the main result of this section, which we restate from the Introduction 

\thmdensity*
\begin{proof}
As in the discussion above, write $G_\infty$ for $G_{\gamma, K}$, the inverse limit of the $G_n$, and write $H_n$ for the Galois group of $K_n(\gamma)/K_{n-1}(\gamma)$. We first claim that $H_n$ is non-trivial for all sufficiently large $n$, which by Theorem \ref{evmartthm} shows that the fixed point process of $G_\infty$ is an eventual martingale. 

Let $\C$ be the partition of $T_n$ into the sets $\{\text{roots of $\theta_n(x) - \alpha$}\},$ where $\alpha$ varies over roots of $\theta_n$, as in \eqref{rootspart}.  Let $\sigma_{\C} \in {\rm Sym} (V_n)$ be the permutation associated to 
$\C$, i.e. the unique permutation whose orbits are precisely the sets belonging to $\C$.  We wish to show that $\sigma_{\C} \in G_n$ for $n$ sufficiently large, and then the fact that $\sigma_\C$ only acts non-trivially on fibers of $\theta_n$ implies that $\sigma_\C \in H_n$, whence $H_n$ is non-trivial.  

Because $S$ contains only quadratic polynomials, $G_n$ is a 2-group, and thus has non-trivial center.  Let $\delta$ be a nontrivial element of the center of $G_n$, and $\D$ the corresponding central fiber system, i.e. the collection of orbits of $\delta$ (see \cite[Proposition-Definition 4.10]{Jones-io}).  Then by \cite[Theorem 4.9]{Jones-io} we have 
either $\sigma_{\C} = \delta$ or $G_n$ is composed entirely of alternating permutations.  

In the latter case, $\text{Disc}(\gamma_n)$ is a square in $K$. Now for $n \geq 2$, the discriminant formula in \cite[Proposition 6.2]{Me:LeftRightTotal} and the fact that all elements of $S$ are monic and of even degree imply that $\pm \gamma_n(0)$ must be a square in $K$. This implies that the either the curve $C^+ : y^2 = \gamma_2(x)$ or $C^{-} : y^2 = -\gamma_2(x)$ has a point $P$ with $x(P) = 0$ for $n = 2$ and $x(P) = (\theta_3 \circ \cdots \circ \theta_n)(0)$ for $n \geq 3$. The coordinates of $P$ must have non-negative $\p$-adic valuation for every prime $\p$ of $\O_K$ except for the finitely many where $v_\p(c_i) < 0$ for some $i$. 

By hypothesis the degree-4 polynomial $\gamma_2(x)$ is separable, so me may apply Siegel's Theorem \cite[p.353]{jhsdioph} to conclude that both $C^+$ and $C^-$ have only finitely many such points $P$, and hence $\gamma_n(0)$ is a square in $K$ for only finitely many $n$. We have thus shown that $\sigma_{\C} = \delta$ for $n$ sufficiently large, and because $\delta \in G_n$, this implies that $\sigma_{\C} \in G_n$. Hence $H_n$ is non-trivial for $n$ sufficiently large, and thus the fixed point process of $G_\infty$ is an eventual martingale. 

We may now apply both Proposition \ref{evconstprop} and Lemma \ref{maxlem}. An argument identical to the proof of Theorem 1.3 on p. 1122 of \cite{Jones-io} shows that
$$
\lim_{n \to \infty} \textbf{P}(X_n > 0) = 0
$$
By the definition of the Galois process, this is the same as $\lim_{n \to \infty} \FPP(G_n) = 0$, and so Theorem \ref{density to Galois} gives $D^+(P(\gamma, a_0)) = 0$, whence $D(P(\gamma, a_0)) = 0$.
\end{proof}
\begin{remark} It is worth pointing out that the density results in this section rely on some probability theory (i.e., martingales), but in a totally different way from the probability theory used in Sections \ref{sec:finiteorbit}-\ref{sec:Z[t]} (e.g., the monkey and typewriter problem). Thus, we have provided at least two distinct applications of probability theory to the study of arboreal representations. 
\end{remark}
  
  \bibliography{bibfile}
  \bibliographystyle{alpha}

\end{document}